\documentclass{article}

\usepackage{
  amsmath,
  amssymb,
  amsthm,
  hyperref,
  paralist,
  geometry,
  tikz,
  thmtools,
  mathpazo
}
\usetikzlibrary{arrows}
\usetikzlibrary{positioning,shapes}

%\usepackage[draft]{showlabels}

% Let us keep this minimial
% Let us also define things only if they are previously undefined.

% Common theorem-like environments
\ifcsname theorem\endcsname{}\else\declaretheorem[parent=section]{theorem}\fi
\ifcsname corollary\endcsname{}\else\declaretheorem[sibling=theorem]{corollary}\fi
\ifcsname lemma\endcsname{}\else\declaretheorem[sibling=theorem]{lemma}\fi
\ifcsname proposition\endcsname{}\else\declaretheorem[sibling=theorem]{proposition}\fi
\ifcsname conjecture\endcsname{}\else\declaretheorem[sibling=theorem]{conjecture}\fi
\ifcsname problem\endcsname{}\else\fi
\ifcsname question\endcsname{}\else\fi
\ifcsname definition\endcsname{}\else\declaretheorem[sibling=theorem, style=definition]{definition}\fi
\ifcsname exercise\endcsname{}\else\fi
\ifcsname example\endcsname{}\else\declaretheorem[sibling=theorem, style=definition]{example}\fi
\ifcsname remark\endcsname{}\else\declaretheorem[sibling=theorem, style=remark]{remark}\fi

% Common abbreviations

% Absolutely standard rings and fields

\providecommand {\Q}{{\bf Q}}

\providecommand {\C}{{\bf C}}

% Common spaces grassmannian
\renewcommand {\P}{{\bf P}}

% Groups

\providecommand{\PGL}{\operatorname{PGL}}

% f \from G \to H reads much better than f \colon G \to H
\providecommand {\from}{{\colon}}

% Absolutely standard notation
\providecommand{\spec}{\operatorname{Spec}}
\providecommand{\proj}{\operatorname{Proj}}

% Kernel is already defined

\providecommand{\Hom}{\operatorname{Hom}}
\providecommand{\Ext}{\operatorname{Ext}}

\providecommand{\Aut}{\operatorname{Aut}}
\providecommand{\codim}{\operatorname{codim}}
% Dim is already defined
\providecommand{\Pic}{\operatorname{Pic}}
\providecommand{\Sym}{\operatorname{Sym}}
\providecommand{\rk}{\operatorname{rk}}

\renewcommand{\H}{\mathcal H}
\newcommand{\V}{\mathcal V}
\newcommand{\U}{\mathcal U}
\newcommand{\M}{\mathcal M}
\newcommand{\td}{\widetilde}
\newcommand{\F}{{\mathbf F}}
\newcommand{\sub}{{\rm sub}}
\newcommand{\gen}{{\rm gen}}
\newcommand{\irr}{{\rm irr}}
\DeclareMathOperator{\Br}{Br}
\DeclareMathOperator{\Ram}{Ram}

\setcounter{section}{-1}
\numberwithin{equation}{section}

\declaretheorem[unnumbered]{claim}
\declaretheorem[title=Theorem, style=theorem]{maintheorem}

\linespread{1.1}

\title{The Picard rank conjecture for the Hurwitz spaces of degree up to five}
\author{Anand Deopurkar \and Anand Patel}

\begin{document}
\maketitle
\begin{abstract}
  We prove that the rational Picard group of the simple Hurwitz space $\mathcal H_{d,g}$ is trivial for $d$ up to five. We also relate the rational Picard groups of the Hurwitz spaces to the rational Picard groups of the Severi varieties of nodal curves on Hirzebruch surfaces.
\end{abstract}

%\tableofcontents
\section{Introduction}\label{sec:introduction}
Let $\H_{d,g}$ be the simple Hurwitz space which parametrizes isomorphism classes of simply branched degree $d$ covers of genus zero curves by genus $g$ curves. Although $\H_{d,g}$ has been studied classically, many fundamental questions about its geometry are still unanswered. The goal of this paper is to address one such question, the question of its Picard group. It is conjectured (for example, \cite{diaz96:_towar_homol_of_hurwit_spaces}) that the rational Picard group $\Pic_\Q(\H_{d,g})$ is trivial. We call this the \emph{Picard rank conjecture} for $\H_{d,g}$. Our main result is a proof of this conjecture for $d \leq 5$.
\begin{maintheorem}\label{thm:prc345}
  The rational Picard group of $\H_{d,g}$ is trivial for $d \leq 5$.
\end{maintheorem}
In the main text, \autoref{thm:prc345} is divided into the case of degree 3 (\autoref{thm:PRC3}), degree 4 (\autoref{thm:PRC4}),  and degree 5 (\autoref{thm:PRC5}).

The Picard rank conjecture was known for $d = 2$ and $3$. For $d = 2$, it was proved by Cornalba and Harris \cite[Lemma~4.5]{cornalba88:_divis}, and for $d = 3$ by Stankova-Frenkel \cite[\S~12.2]{Stankova-Frenkel00:_Modul_Of_Trigon_Curves}. In these cases, now there are more refined results about the Picard group of the moduli stacks; see \cite{cornalba:pic_hyper} for $d = 2$ and \cite{bolognesi12:_stack} for $d = 3$.

The conjecture is also known for $d > 2g-2$. In this range, the map $\H_{d,g} \to \M_g$ is a fibration, where $\M_g$ is the moduli space of smooth curves of genus $g$. An analysis of this fibration shows that $\Pic_\Q(\H_{d,g}) = 0$ if and only if $\Pic_\Q(\M_g) \cong \Q$ (see, for example, \cite{mochizuki95:_hurwit} or \cite[\S~3]{diaz96:_towar_homol_of_hurwit_spaces}). Thus, the conjecture for $d > 2g-2$ follows from Harer's theorem \cite{harer:Pic_Mg}.

We briefly explain the rationale behind the conjecture. Let us blur the distinction between the coarse moduli spaces and the fine moduli stacks. This is harmless, since we are concerned with the rational Picard group. Let us also take $d \geq 4$ (the discussion holds for $d = 2, 3$ with minor modifications). Denote by $\td \H_{d,g}$ the partial compactification of $\H_{d,g}$ that parametrizes covers $[\alpha \from C \to \P^1]$ where $C$ is allowed to be nodal, but still irreducible, and $\alpha$ need not be simply branched. Let $\alpha \from \mathcal C \to \mathcal P$ be the universal family over $\td \H_{d,g}$, where $\rho \from \mathcal C \to \td \H_{d,g}$ is a family of irreducible, at worst nodal curves of arithmetic genus $g$, and $\pi \from \mathcal P \to \td \H_{d,g}$ a family of smooth curves of genus $0$. From this data, we can construct three `tautological' divisor classes on $\td \H_{d,g}$ given by 
\[\rho_*(c_1(\omega_\rho)^2), \quad \rho_*(c_1(\omega_\rho) \alpha^*c_1(\omega_\pi)), \text{ and } \rho_*([\delta_\rho]).\]
Here $\omega$ stands for the relative dualizing sheaf and $\delta$ for the singular locus. It is easy to check that the three tautological classes are $\Q$-linearly independent. On the other hand, $\td \H_{d,g} \setminus \H_{d,g}$ is a union of three irreducible divisors, namely the locus $\Delta$ where $C$ is singular, the locus $T$ where $\alpha$ has a higher order ramification point, and the locus $D$ where $\alpha$ has two ramification points over a branch point. It is also easy to check that the classes of $\Delta$, $T$, and $D$ are $\Q$-linearly independent. Thus, $\Pic_\Q(\H_{d,g}) = 0$ is equivalent to $\Pic_\Q(\td \H_{d,g})$ being generated by the tautological classes. The Picard rank conjecture thus expresses the often-satisfied expectation that there are no other divisor classes than the tautological ones.

We now outline our strategy for proving \autoref{thm:prc345}. Let $\alpha \from C \to \P^1$ be a degree $d$ cover. Then $C$ embeds in a $\P^{d-2}$-bundle over $\P^1$, which we denote by $\P E \to \P^1$. Thanks to the work of Casnati and Ekedahl, the resolution of the ideal of $C$ in $\P E$ can be described explicitly. The terms in this resolution involve (twists of) vector bundles on $\P^1$ \cite{casnati96:_cover_of_algeb_variet_i}. Let $U \subset \td \H_{d,g}$ be the open locus where these vector bundles are the most generic. The key steps in our proof are the following.
\begin{enumerate}
\item Identify the divisorial components of $\td \H_{d,g} \setminus U$.
\item Express $U$ as a (successive) quotient of an open subset of an affine space by actions of linear algebraic groups.
\item Use the previous two steps to get a bound on the Picard rank of $\td \H_{d,g}$, and in turn, the Picard rank of $\H_{d,g}$.
\end{enumerate}
Needless to say, we are able to carry out all three steps only for $d
\leq 5$. However, we can carry out parts of step (1) in general. For
step (2), we highlight that it remains unknown in general whether one
can dominate $\td \H_{d,g}$ by an affine space for $d \geq 6$.

To analyze $\td \H_{d,g} \setminus U$, we must analyze the loci in
$\td \H_{d,g}$ where the bundle $E$ and the vector bundles appearing
in the resolution of $C$ are unbalanced. We call these loci the Maroni
loci and the Casnati--Ekedahl loci, respectively. We spend significant
effort on understanding the decomposition of $\td \H_{d,g}$ into these
loci. Contained in \autoref{sec:CE}, the results of this analysis may
be of independent interest.

A key tool in our analysis is a construction that relates the Maroni
loci to the Severi varieties of Hirzebruch surfaces. Originally due to
Ohbuchi \cite{ohbuchi:relations}, this `associated scroll
construction' allows us to get the required dimension estimates. The
key input here is a theorem of Tyomkin that guarantees that the Severi
varieties of Hirzebruch surfaces are irreducible of the expected
dimension \cite{tyomkin:irred}.

The associated scroll construction also lets us relate the Picard
ranks of the Hurwitz spaces to the Picard ranks of the Severi
varieties. To state our result, let us denote by $\U_g(\F_m, d\tau)$ the
space of irreducible nodal curves of geometric genus $g$ in the linear
system $|d\tau|$ on the Hirzebruch surface $\F_m$, where $\tau$ is the
section with self-intersection $m$.
\begin{maintheorem}\label{thm:B}
  Let $m \geq \lfloor(g+d-1)/(d-1)\rfloor$. Then $\Pic_\Q \U_g(\F_m, d\tau)
  = 0$ implies $\Pic_\Q\H_{d,g} = 0$.

  Let $m \geq \lceil 2(g+d-1)/(d-1) \rceil$. Then
  $\Pic_\Q\U_g(\F_m, d\tau) = 0$ if and only if $\Pic_\Q\H_{d,g} = 0$.
\end{maintheorem}
In the main text, \autoref{thm:B} is \autoref{SvsH}.

\subsection{Notation}\label{sec:notation}
We work with a few different versions of the Hurwitz spaces. We assemble their definitions here. We work over the field $\C$ of complex numbers. By a curve, we mean a connected, proper, reduced scheme of finite type over $\C$. Throughout, assume that $g \geq 3$.
\begin{description}
\item [$\H_{d,g}$:] This is the coarse moduli space of $[\alpha \from C \to \P^1]$, where $C$ is a smooth curve of genus $g$ and $\alpha$ a finite map of degree $d$ with simple branching (that is, the branch divisor of $\alpha$ is supported at $2g+2d-2$ distinct points). Two such covers $[\alpha_1 \from C_1 \to \P^1]$ and $[\alpha_2 \from C_2 \to \P^1]$ are considered isomorphic if there are isomorphisms $\phi \from C_1 \to C_2$ and $\psi \from \P^1 \to \P^1$ such that $\alpha_2 \circ \phi = \psi \circ \alpha_1$.

\item [$\td \H_{d,g}$:] This is the coarse moduli space of $[\alpha \from C \to \P^1]$, where $C$ is an irreducible, at worst nodal curve of arithmetic genus $g$, and $\alpha$ a finite map of degree $d$. The isomorphism condition is the same as that for $\H_{d,g}$.

\item [$\H_{d,g}^\dagger$:] This is like $\H_{d,g}$, but with `framed' target $\P^1$. The objects it parametrizes are $[\alpha \from C \to \P^1]$ as in the description of $\H_{d,g}$, but $[\alpha_1 \from C_1 \to \P^1]$ and $[\alpha_2 \from C_2 \to \P^1]$ are considered isomorphic if there is an isomorphism $\phi \from C_1 \to C_2$ such that $\alpha_2 \circ \phi = \alpha_1$.

\item [$\td \H_{d,g}^\dagger$:] This is like $\td \H_{d,g}$, but with framed target $\P^1$.
\end{description}
All four are irreducible quasi-projective varieties with at worst quotient singularities. In particular, they are normal and $\Q$-factorial. The group $\Aut \P^1 = \PGL_2$ acts on the framed versions. The unframed versions are the quotients by this action in the sense that the fibers of the morphism from the framed space to the unframed space are precisely the $\PGL_2$ orbits. We have 
\[\dim \H_{d,g} = \dim \td \H_{d,g} = 2g+2d-5,\]
and 
\[\dim \H_{d,g}^\dagger = \dim \td \H_{d,g}^\dagger = 2g+2d-2.\]

In addition, we work with the following Severi varieties:
\begin{description}
\item[$\U_{g}(\F_m, d\tau)$:] This is the locus of irreducible nodal curves of geometric genus $g$ in the linear series $|d\tau|$ in the Hirzebruch surface $\F_m$. Here $\tau \subset \F_m$ is the section of self-intersection $m$.
\item[$\V_{g}(\F_m, d\tau)$:] This is the closure of $\U_{g}(\F_m, d\tau)$ in the projective space $|d\tau|$.
\item[$\V^\irr_{g}(\F_m, d\tau)$:] This is the open subset of reduced and irreducible curves in $\V_{g}(\F_m, d\tau)$.
\end{description}

We do not distinguish between a vector bundle and the corresponding locally free sheaf. Note that the vector bundle associated to the locally free sheaf $F$ is the relative $\spec$ of the symmetric algebra on $F^\vee$.

\section{Preliminaries}\label{sec:preliminaries}
In this expository section, we recall two key results. The first describes the Picard group of the quotient of a variety by a group action. The second is a structure theorem for finite covers which enables us to describe a large open subset of the Hurwitz space as such a quotient.

\subsection{Picard groups of quotients}
Let $G$ be a linear algebraic group acting on a variety $X$. Denote by $\Pic_GX$ the group of $G$-linearized line bundles on $X$. Forgetting the $G$-linearization gives a homomorphism $\Pic_GX \to \Pic X$.
\begin{proposition}\label{thm:kkv}\cite[Lemma~2.2 + Proposition~2.3]{hanspeter89:_picar_group_of_g_variet} For a connected linear algebraic group $G$ acting on an irreducible variety $X$, we have an exact sequence
   \[ \chi(G) \to \Pic_G X \to \Pic X,\]
   where $\chi(G)$ is the group of (algebraic) characters of $G$. Furthermore, if $X$ is normal, then the sequence has an extension by a homomorphism $\Pic X \to \Pic G$.
 \end{proposition}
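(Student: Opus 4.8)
The plan is to run the standard cohomological analysis of linearizations. Recall that a $G$-linearization of a line bundle $L$ on $X$ is an isomorphism $\Phi\from \sigma^*L \xrightarrow{\sim} p^*L$ of line bundles on $G\times X$, where $\sigma\from G\times X\to X$ is the action and $p$ the projection to $X$, subject to a cocycle (associativity) condition on $G\times G\times X$; the map $\Pic_G X\to\Pic X$ simply forgets $\Phi$ and records $L$. Accordingly I would split the proof into two tasks: first, identify the kernel of the forgetful map with the image of $\chi(G)$, giving exactness at $\Pic_G X$; second, under the normality hypothesis, construct the obstruction map $\Pic X\to\Pic G$ that measures failure of linearizability, and prove exactness at $\Pic X$.

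\textbf{Exactness at $\Pic_G X$.} An element of the kernel of $\Pic_G X\to\Pic X$ is a linearization of the trivial bundle $\mathcal O_X$, which is the same datum as a unit $\phi\in\mathcal O(G\times X)^*$ satisfying $\phi(gh,x)=\phi(g,hx)\,\phi(h,x)$. Here I would invoke Rosenlicht's theorem on the units of a product of irreducible varieties to write $\phi(g,x)=\psi(g)f(x)$ with $\psi\in\mathcal O(G)^*$ and $f\in\mathcal O(X)^*$, up to a constant. Substituting into the cocycle identity and setting $g=e$ forces $f$ to be constant; then $\phi$ depends on $g$ alone and the identity says precisely that $\phi$ is a character of $G$. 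Conversely each $\lambda\in\chi(G)$ defines the linearization $\phi(g,x)=\lambda(g)$ of $\mathcal O_X$. Hence the kernel equals the image of $\chi(G)\to\Pic_G X$. This step uses only irreducibility of $X$, matching the hypotheses of the first assertion.

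\textbf{The extension.} For the final map I would first record the Künneth-type splitting $\Pic(G\times X)=p_G^*\Pic G\oplus p_X^*\Pic X$. This holds because $G$ is a connected linear algebraic group, hence rational, while $X$ is normal; normality is exactly what supplies the homotopy invariance $\Pic(\A^1\times X)=\Pic X$ from which the splitting bootstraps via the cellular structure of $G$. I then define $\Pic X\to\Pic G$ by sending $L$ to the restriction of $\sigma^*L\otimes p^*L^{-1}$ to $G\times\{x_0\}$, equivalently to $a_{x_0}^*L$ for the orbit map $a_{x_0}\from g\mapsto g\cdot x_0$; this is the $\Pic G$-component of $\sigma^*L\otimes p^*L^{-1}$ under the splitting. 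If $L$ is linearizable then $\sigma^*L\cong p^*L$, so this component vanishes. Conversely, if it vanishes, then since the $\Pic X$-component of $\sigma^*L\otimes p^*L^{-1}$ is automatically trivial (restrict to $\{e\}\times X$), the splitting yields $\sigma^*L\cong p^*L$, and one upgrades this isomorphism to a genuine linearization. This gives exactness at $\Pic X$.

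\textbf{Main obstacle.} The delicate step is this last upgrade: promoting a bare isomorphism $\Phi\from\sigma^*L\cong p^*L$ to one satisfying the cocycle condition. The associativity discrepancy of $\Phi$ is a unit on $G\times G\times X$, and I would again apply Rosenlicht to split it and show it can be absorbed by modifying $\Phi$ by a suitable element of $\mathcal O(G\times X)^*$, using connectedness of $G$ to eliminate the residual character-valued obstruction. This, together with establishing the product decomposition of $\Pic(G\times X)$ in the normal case, is where the genuine content lies; everything else is formal bookkeeping with cocycles.
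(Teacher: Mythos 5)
The paper offers no proof of this proposition — it is quoted directly from Knop--Kraft--Vust — and your argument is a correct reconstruction of exactly that cited proof: Rosenlicht's unit theorem on $G\times X$ identifies linearizations of $\mathcal O_X$ with characters of the connected group $G$ (exactness at $\Pic_G X$), while the decomposition $\Pic(G\times X)\cong p_G^*\Pic G\oplus p_X^*\Pic X$ (valid because $G$ is rational and $X$ is normal, via homotopy invariance of $\Pic$) produces the obstruction map $L\mapsto a_{x_0}^*L$ and exactness at $\Pic X$. Your flagged ``delicate step'' does close as you predict: after normalizing $\Phi$ along $\{e\}\times X$, the Rosenlicht factorization $c(g,h,x)=a(g)b(h)d(x)$ of the associativity discrepancy on $G\times G\times X$, evaluated at $g=e$ and then $h=e$, forces $c\equiv 1$, so the bare isomorphism $\sigma^*L\cong p^*L$ upgrades automatically to a linearization.
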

 
 Let $\pi \from X \to Y$ be a morphism that is equivariant with the trivial $G$ action on $Y$. Let $L$ be a line bundle on $Y$. The pullback $\pi^*L$ carries a natural $G$-linearization. We thus have a homomorphism $\Pic Y \to \Pic_G X$.
 \begin{proposition}\label{thm:pic_quotient}
   Let $X$ and $Y$ be irreducible normal varieties, $G$ a linear algebraic group acting on $X$, and $\pi \from X \to Y$ a surjective morphism, equivariant with the trivial action on $Y$. Suppose the fibers of $\pi$ consist of single $G$-orbits. Then the map $\Pic Y \to \Pic_G X$ is injective and we have
   \[ \rk \Pic Y  \leq \rk \chi(G) + \rk \Pic X.\]
   Furthermore, if $G$ is reductive and the stabilizers $G_x$ are finite, then we have an isomorphism
   \[ \Pic Y \otimes \Q \xrightarrow{\sim} \Pic_G X \otimes \Q.\]
\end{proposition}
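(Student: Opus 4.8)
The plan is to establish the three assertions in sequence: first injectivity of $\pi^* \from \Pic Y \to \Pic_G X$ by a direct descent argument, then the rank inequality by feeding injectivity into the exact sequence of \autoref{thm:kkv}, and finally the rational isomorphism in the reductive case by upgrading injectivity with a descent-of-line-bundles argument. The point of passing to $G$-linearized bundles is that the bare forgetful composite $\Pic Y \to \Pic_G X \to \Pic X$ is just $\pi^*$, which need not be injective; the linearization is exactly the extra data that remembers $Y$.

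For injectivity, I would take a line bundle $L$ on $Y$ whose pullback $\pi^* L$ is trivial as a $G$-linearized bundle. A $G$-equivariant trivialization is the same as a nowhere-vanishing $G$-invariant global section of $\pi^* L$. Working over an affine open $U \subseteq Y$ on which $L$ is trivial, this becomes a nowhere-vanishing $G$-invariant regular function $f$ on $\pi^{-1}(U)$. Since the fibers of $\pi$ are single $G$-orbits and $\pi$ is dominant, $f$ is constant on fibers and hence lies in $\C(X)^G = \C(Y)$; as $f$ is a regular unit and $Y$ is normal, its descent is a regular nowhere-vanishing function on $U$, trivializing $L$ there, and patching over a cover shows $L \cong \mathcal O_Y$. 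The one point needing care is the identification $\C(X)^G = \C(Y)$ (valid because the generic fiber is a single orbit) together with the fact that a fiberwise-constant regular unit descends to a \emph{regular} function on the normal variety $Y$; this is where surjectivity of $\pi$ and normality of $Y$ enter.

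For the rank bound, injectivity gives $\rk \Pic Y \le \rk \Pic_G X$, so it suffices to bound the right-hand side, and here I would use the exact sequence $\chi(G) \to \Pic_G X \to \Pic X$ of \autoref{thm:kkv}. The kernel of the forgetful map $\Pic_G X \to \Pic X$ is a quotient of $\chi(G)$, hence has rank at most $\rk \chi(G)$, while its image is a subgroup of $\Pic X$, hence has rank at most $\rk \Pic X$; adding the two contributions yields $\rk \Pic_G X \le \rk \chi(G) + \rk \Pic X$, and combining with injectivity gives the claim.

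The final assertion is the substantive one, and the descent step is where I expect the main difficulty. Injectivity already supplies an inclusion $\Pic Y \otimes \Q \hookrightarrow \Pic_G X \otimes \Q$, so the content is rational surjectivity. When $G$ is reductive and all stabilizers $G_x$ are finite, $\pi$ is a geometric quotient and $[X/G]$ is a Deligne--Mumford stack (we are in characteristic zero) with coarse space $Y$; equivalently one argues through Kempf's descent lemma. The mechanism is that each finite stabilizer $G_x$ acts on the fiber $M_x$ of a $G$-linearized bundle $M$ through a character of finite order, so a suitable power $M^{\otimes N}$ has trivial stabilizer action and therefore descends to a line bundle $L$ on $Y$ with $M^{\otimes N} \cong \pi^* L$ as $G$-linearized bundles; then $[M] = \tfrac1N \pi^*[L]$ in $\Pic_G X \otimes \Q$. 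The obstacle is making $N$ independent of $x$, i.e. showing the cokernel of $\pi^*$ is torsion. I would deduce this cleanly from the identification $\Pic_G X = \Pic[X/G]$ and the general fact that the coarse-space morphism of a Deligne--Mumford stack induces an isomorphism on rational Picard groups.
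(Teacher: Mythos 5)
Your proposal is correct and takes essentially the same route as the paper: injectivity by descending a $G$-invariant nowhere-vanishing section of $\pi^*L$ (the paper justifies the descent by citing that $Y$ is a geometric quotient, where you use fiberwise constancy plus normality of $Y$), the rank bound by combining injectivity with the exact sequence of \autoref{thm:kkv}, and the rational isomorphism by killing the finite stabilizer actions on fibers via a power of the bundle (the paper invokes the characterization of the image of $\pi^*$ from \cite[Proposition~4.2]{hanspeter89:_picar_group_of_g_variet}). Your one refinement is to flag and handle the uniformity of the exponent $N$ over $x \in X$ --- via boundedness of stabilizer orders, equivalently your Deligne--Mumford coarse-space formulation --- a genuine point that the paper compresses into ``passing to a large enough power.''
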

 \begin{proof}
   Suppose $L$ is a line bundle on $Y$ such that $\pi^*L$ is trivial as a $G$-linearized line bundle. Then $\pi^*L$ has a $G$-invariant nowhere-vanishing section. We claim that such a section descends to a nowhere-vanishing section of $L$ on $Y$. The crucial point is that in our setup, $Y$ is a geometric quotient of $X$ \cite[Proposition~0.2]{GIT}. That is, for every open $U \subset Y$, the preimage $\pi^{-1}U$ is open and the functions on $U$ are the invariant functions on $\pi^{-1}U$:
   \[ \Gamma(U, O_Y) = \Gamma(\pi^{-1} U, O_X)^G.\]
   It follows that the sections of $L$ on $U$ are the invariant sections of $\pi^*L$ on $\pi^{-1}(U)$:
   \[ \Gamma(U, L) = \Gamma(\pi^{-1}U, \pi^* L)^G.\]
   Thus, a $G$-invariant section $\sigma$ of $\pi^*L$ on $X$ gives a section $\overline \sigma$ of $L$ on $Y$. It is easy to check that if $\sigma$ is nowhere-vanishing, so is $\overline \sigma$.

   The bound on $\rk \Pic Y$ follows from the injectivity and \autoref{thm:kkv}. For the last statement, we use the characterization of the image of $\Pic Y \to \Pic_GX$ from \cite[Proposition~4.2]{hanspeter89:_picar_group_of_g_variet}: a $G$-linearized line bundle $L$ is in the image if and only if for every $x \in X$, the stabilizer group $G_x$ acts trivially on the fiber $L_x$. Since the stabilizers are finite, we can arrange this by passing to a large enough power of $L$.
 \end{proof}

 We end with a simple application.
\begin{proposition}\label{thm:framed_unframed}
  Let $U \subset \td \H_{d,g}$ be any open subset and $U^\dagger$ its preimage under $\td \H_{d,g}^\dagger \to \td \H_{d,g}$. Then 
  \[ \rk \Pic U = \rk \Pic U^\dagger.\]
\end{proposition}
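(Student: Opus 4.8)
My plan is to realize the map $\pi \from U^\dagger \to U$ as a geometric quotient by $G = \PGL_2 = \Aut\P^1$ and then feed it into \autoref{thm:pic_quotient} and \autoref{thm:kkv}. By the conventions set up in \autoref{sec:notation}, the morphism $U^\dagger \to U$ is surjective and equivariant for the trivial action on $U$, its fibers are exactly the $\PGL_2$-orbits, and both $U$ and $U^\dagger$ are irreducible and normal. So the only hypotheses of \autoref{thm:pic_quotient} that remain to be checked are that $\PGL_2$ is reductive and that the stabilizers are finite. Reductivity is standard, and I would first record the two facts that make the correction terms vanish after tensoring with $\Q$: since $\PGL_2$ is semisimple it has no nontrivial characters, so $\chi(\PGL_2) = 0$, and $\Pic \PGL_2 \cong \Z/2$ is finite.

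The one step needing genuine geometric input is the finiteness of the stabilizers. Here I would observe that the stabilizer of a point $[\alpha \from C \to \P^1]$ is the image, under projection to the $\psi$-factor, of the automorphism group $\Aut(\alpha) = \{(\phi,\psi) \in \Aut C \times \PGL_2 : \psi\circ\alpha = \alpha\circ\phi\}$. Projection to the $\phi$-factor is injective, since an element $(\mathrm{id},\psi)$ of the kernel satisfies $\psi\circ\alpha = \alpha$ and hence $\psi = \mathrm{id}$ on $\mathrm{im}\,\alpha = \P^1$. Thus the stabilizer embeds into $\Aut C$, and it suffices to know that $\Aut C$ is finite. Because $C$ is irreducible, at worst nodal, of arithmetic genus $g \ge 3$, this holds: an automorphism lifts to the normalization $\td C$ and must preserve the nonempty finite set of points lying over the nodes, and a short case analysis on the geometric genus $\tilde g$ (with $g = \tilde g + \#\text{nodes} \ge 3$ ruling out the only configurations with infinite automorphisms) shows there are only finitely many such.

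With these facts in hand the conclusion is formal. \autoref{thm:pic_quotient} gives an isomorphism $\Pic U \otimes \Q \xrightarrow{\sim} \Pic_{\PGL_2} U^\dagger \otimes \Q$. Applying \autoref{thm:kkv} to the normal variety $U^\dagger$ produces the exact sequence $\chi(\PGL_2) \to \Pic_{\PGL_2}U^\dagger \to \Pic U^\dagger \to \Pic\PGL_2$; tensoring with $\Q$ and using $\chi(\PGL_2)=0$ together with $\Pic\PGL_2$ finite collapses this to an isomorphism $\Pic_{\PGL_2}U^\dagger \otimes \Q \xrightarrow{\sim} \Pic U^\dagger \otimes \Q$. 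Composing the two isomorphisms yields $\Pic U \otimes \Q \cong \Pic U^\dagger\otimes\Q$, and hence $\rk \Pic U = \rk \Pic U^\dagger$.

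I expect the verification that the stabilizers are finite to be the main obstacle, since it is the only place where the geometry of the covers enters; once $\chi(\PGL_2)$ and $\Pic\PGL_2$ have been identified, everything else is bookkeeping with the two propositions of this subsection.
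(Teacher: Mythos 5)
Your proposal is correct and is essentially the paper's own proof, which simply says to apply \autoref{thm:kkv} and \autoref{thm:pic_quotient} with $G = \PGL_2$, $X = U^\dagger$, and $Y = U$. You additionally spell out the hypothesis checks the paper leaves implicit --- in particular the finiteness of stabilizers via the embedding of $\Aut(\alpha)$ into $\Aut C$ (finite since $g \geq 3$), and the vanishing of $\chi(\PGL_2) \otimes \Q$ and $\Pic \PGL_2 \otimes \Q$ --- all of which are accurate.
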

\begin{proof}
  Apply \autoref{thm:kkv} and \autoref{thm:pic_quotient} with $G = \PGL_2$, $X = U^\dagger$, and $Y = U$.
\end{proof}

 \subsection{The Casnati--Ekedahl structure theorem}
 Let $X$ and $Y$ be integral schemes and $\alpha \from X \to Y$ a finite flat Gorenstein morphism of degree $d \geq 3$. The map $\alpha$ gives an exact sequence
\begin{equation}\label{structure sheaf sequence}
  0 \to O_Y \to \alpha_* O_X \to {E_\alpha}^\vee \to 0,
\end{equation}
where $E = E_\alpha$ is a vector bundle of rank $(d-1)$ on $Y$, called the \emph{Tschirnhausen bundle} of $\alpha$. Denote by $\omega_\alpha$ the dualizing sheaf of $\alpha$. Applying $\Hom_Y(-,O_Y)$ to \eqref{structure sheaf sequence}, we get
\begin{equation}\label{dual structure sheaf sequence}
  0 \to E \to \alpha_* \omega_\alpha \to O_Y \to 0.
\end{equation}
The map $E \to \alpha_* \omega_\alpha$ induces a map $\alpha^* E \to \omega_\alpha$. 

\begin{theorem}\label{thm:CE}
  \cite[Theorem~2.1]{casnati96:_cover_of_algeb_variet_i}
  In the above setup, $\alpha^* E \to \omega_\alpha$ gives an embedding $\iota \from X \to \P E$ with $\alpha  = \pi \circ \iota$, where $\pi \from \P E \to Y$ is the projection. Moreover, the subscheme $X \subset \P E$ can be described as follows.
\begin{enumerate}
\item The resolution of $O_X$ as an $O_{\P E}$ module has the form
\begin{equation}\label{eqn:casnati_resolution}
  \begin{split}
    0 \to \pi^* N_{d-2} (-d) \to \pi^* N_{d-3}(-d+2) \to \pi^*N_{d-4}(-d+3) \to \dots \\
    \dots \to \pi^*N_2(-3) \to \pi^*N_1(-2) \to O_{\P E} \to O_X \to 0,
  \end{split}
\end{equation}
where the $N_i$ are vector bundles on $Y$. Restricted to a point $y \in Y$, this sequence is the minimal free resolution of $X_y \subset \P E_y$.
\item The ranks of the $N_i$ are given by
  \[ \rk N_i = \frac{i(d-2-i)}{d-1} {d \choose {i+1}},\]
\item We have $N_{d-2} \cong \pi^* \det E$. Furthermore, the resolution is symmetric, that is, isomorphic to the resolution obtained by applying $\Hom_{O_{\P E}}(-, N_{d-2}(-d))$.
\end{enumerate}
\end{theorem}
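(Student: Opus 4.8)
The statement is attributed to Casnati–Ekedahl, so what follows is a reconstruction of the argument I would give. The two exact sequences in the statement already produce the map $\phi \from \alpha^* E \to \omega_\alpha$; the plan is to first show $\phi$ is surjective, so that it defines a morphism $\iota \from X \to \P E$ with $\iota^* O_{\P E}(1) \cong \omega_\alpha$ and $\pi \circ \iota = \alpha$, and then to show $\iota$ is a closed immersion. Since $\alpha$ is finite flat and everything in sight commutes with base change on $Y$, both assertions may be checked on a fiber $X_y = \spec A$, where $A$ is a Gorenstein Artinian $\C$-algebra of length $d$. There $\omega_\alpha$ restricts to the dualizing module $\omega_A = \Hom_\C(A, \C)$, which is free of rank one over $A$, and $E_y = \ker(\omega_A \to \C)$ for the evaluation-at-$1$ functional. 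The content of the fiberwise claim is the classical fact that a finite Gorenstein scheme is embedded nondegenerately in $\P^{d-2}$ by the complete linear system of its dualizing module modulo the constants: surjectivity of $\phi|_{X_y}$ is the statement that $E_y$ generates $\omega_A$ as an $A$-module, and injectivity on points and on tangent vectors is Gorenstein duality. This is the step I expect to be the most delicate, since one must handle nonreduced and degenerate fibers uniformly.

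Granting the embedding, the second task is to pin down the minimal free resolution of $O_{X_y} \subset \P^{d-2}$. The key point is that the Hilbert function of the homogeneous coordinate ring of $X_y$ is forced: $X_y$ has length $d$, spans $\P^{d-2}$, and is arithmetically Gorenstein, so its $h$-vector must be the symmetric vector $(1, d-2, 1)$ and its Castelnuovo--Mumford regularity is $2$. Regularity two forces the ideal to be generated by quadrics and bounds the entire resolution; together with the Gorenstein symmetry this concentrates the $i$-th syzygy module in a single internal degree, namely $i+1$ for $1 \le i \le d-3$ and $d$ for $i = d-2$. Consequently the graded Betti numbers are determined by the Hilbert function alone, hence constant in the family, and a direct count gives the values in the statement; the generators are the $\rk N_1 = \tfrac{(d-3)}{d-1}\binom{d}{2}$ quadrics through $X_y$, and for $d = 5$ one recovers the Buchsbaum--Eisenbud Pfaffian resolution of a codimension-three Gorenstein ideal.

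To globalize, I would use that $\alpha$ is flat with fibers of constant Hilbert function, so that the sheaves $\pi_*\bigl(I_X(n)\bigr)$ and their higher direct images behave well in families and the fiberwise minimal resolutions assemble into a resolution of $O_X$ by sheaves of the form $\pi^* N_i \otimes O_{\P E}(-a_i)$ with $N_i$ locally free on $Y$ of the predicted rank. Because the minimal Betti numbers do not jump, this complex commutes with base change and restricts to the minimal free resolution on each fiber; the twists are read off from the fiberwise degrees found above, giving $a_i = i+1$ for $1 \le i \le d-3$ and $a_{d-2} = d$, which is exactly the shape in \eqref{eqn:casnati_resolution}.

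Finally, for part (3) I would invoke relative Serre duality on $\pi \from \P E \to Y$: applying $\Hom_{O_{\P E}}(-, \omega_{\P E/Y})$ to the resolution produces a second resolution, and comparing it with the original using $\omega_\alpha \cong \iota^* O_{\P E}(1)$ together with the standard expression for $\omega_{\P E/Y}$ in terms of $O_{\P E}(-(d-1))$ and $\pi^*\det E$ shows that the resolution is self-dual and forces $N_{d-2} \cong \pi^*\det E$. The two substantive inputs are thus the uniform fiberwise embedding and the non-jumping of the Betti numbers; once these are established, the ranks, the twists, and the symmetry are formal consequences of the theory of arithmetically Gorenstein zero-dimensional schemes.
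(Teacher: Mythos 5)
The paper offers no proof of \autoref{thm:CE}: it is quoted verbatim from \cite[Theorem~2.1]{casnati96:_cover_of_algeb_variet_i}, so the relevant benchmark is the original Casnati--Ekedahl argument, and your reconstruction follows it in all essentials --- the fiberwise embedding of a length-$d$ Artinian Gorenstein algebra via its dualizing module modulo constants, the $h$-vector $(1, d-2, 1)$ together with Gorenstein symmetry and the regularity bound forcing a pure resolution whose Betti numbers are determined by the Hilbert function alone, globalization via constancy of Betti numbers and cohomology-and-base-change, and relative duality on $\pi \from \P E \to Y$ for the self-duality and $N_{d-2} \cong \pi^* \det E$. Your sketch is correct in outline, and you rightly identify the uniform fiberwise embedding (equivalently, the arithmetically Gorenstein property of the embedded fibers, which you assert but which is exactly what the duality pairing on the Artinian algebra must be used to prove) as the one step carrying the real content.
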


The branch divisor of $\alpha \from X \to Y$ is given by a section of $(\det E)^{\otimes 2}$. In particular, if $X$ is a curve of (arithmetic) genus $g$, $\alpha$ has degree $d$, and $Y = \P^1$, then 
\begin{equation}\label{eqn:rk_deg_E}
 \rk E = d-1 \text{ and } \deg E = g+d-1.
\end{equation}

\section{The Maroni and Casnati--Ekedahl loci}\label{sec:CE}
Consider a cover $\alpha \from C \to \P^1$ and the relative canonical embedding $C \subset \P E_\alpha$. Since vector bundles on $\P^1$ split as direct sums of line bundles, the vector bundle $E_\alpha$, and the higher syzygy bundles $N_i$ appearing in \autoref{thm:CE} are discrete invariants of $\alpha$. We thus get a decomposition of the Hurwitz space into locally closed subsets where the isomorphism type of the bundles $E_\alpha$ and $N_i$ are constant. This section is devoted to the analysis of some of these locally closed subvarieties, particularly their dimensions. We only consider the bundle $E_\alpha$ and $F_\alpha := N_1$. Note that
\[ E_\alpha = \ker(\alpha_* \omega_\alpha \to O_Y) \text{ and } F_\alpha = \alpha_* I_C(2),\]
where $I_C \subset O_{\P E_\alpha}$ is the ideal sheaf of $C$. 
\begin{definition}\label{def:maroni_ce}
  For vector bundles $E$ and $F$ on $\P^1$, define the following closed subvarieties of $\H_{d,g}^\dagger$:
  \begin{align*}
    M(E, F) &:= \overline{\{ [\alpha \from C \to \P^1] \mid E_\alpha \cong E \text{ and } F_\alpha \cong F\}},\\
    M(E) &:= \overline {\{ [\alpha \from C \to \P^1] \mid E_\alpha \cong E\}},\\
    C(F) &:= \overline {\{ [\alpha \from C \to \P^1] \mid F_\alpha \cong F\}}.
  \end{align*}
  Call $M(E)$ the \emph{Maroni loci} and $C(F)$ the \emph{Casnati--Ekedahl loci}. Define subvarieties $\td M(E, F)$, $\td M(E)$, and $\td C(F)$ of $\td \H_{d,g}^\dagger$ analogously. 
\end{definition}
Abusing notation, we denote the images of these loci in the unframed versions $\H_{d,g}$ and $\td \H_{d,g}$ by the same letters. The framed versus unframed setting is usually clear by context, and sometimes irrelevant, for example in discussing the codimensions. We caution the reader that these loci are not necessarily irreducible or of expected dimension (\autoref{ex:1}, \autoref{ex:2}). Even determining whether they are non-empty remains a challenge in full generality.

\subsection{The associated scroll construction}\label{sec:ass_scroll}
To analyze the Maroni loci $M(E)$, we associate to a cover of $\P^1$ a curve on a Hirzebruch surface. The construction is originally due to Ohbuchi \cite{ohbuchi:relations}. Let $C$ be an irreducible curve of arithmetic genus $g$ and $\alpha \from C \to \P^1$ a finite cover of degree $d$. Let $\zeta$ be a global section of $O_C(m) = \alpha^*O_{\P^1}(m)$ that projects to a nonzero section of $E^\vee_\alpha(m)$. In other words, $\zeta$ is not a pullback of a section from $\P^1$. The section $\zeta$ gives a map from $C$ to the total space of the line bundle $O(m)$ over $\P^1$. Let $\F_m = \proj (O \oplus O(-m))$ be the Hirzebruch surface that compactifies this total space. We thus get the diagram
\[
\begin{tikzpicture}[node distance=1.5cm]
  \node (C) {$C$};
  \node [right of =C] (F) {$\F_m$};
  \node [below of = F] (P) {$\P^1$};
  \draw [->] 
  (C) edge node [below left] {$\alpha$} (P) 
  (C) edge node[above] {$\nu$} (F) 
  (F) edge node [right] {$\pi$} (P);
\end{tikzpicture}.
\]
Let $\sigma \subset \F_m$ be the directrix and $\tau \subset \F_m$ the section disjoint from $\sigma$ (so that $\sigma^2 = -m$ and $\tau^2 = m$). By construction, $\nu(C) \subset \F_m$ avoids the directrix $\sigma$. Suppose $C$ is smooth and $\alpha \from C \to \P^1$ does not factor nontrivially. Then $\nu$ is birational onto its image, and therefore $\nu(C)$ is a reduced and irreducible element of the linear system $|d\tau|$. By the following proposition, $\nu(C)$ is a point in the Severi variety $\V_g(\F_m,d\tau)$. 

\begin{proposition}
  A reduced and irreducible curve on $\F_m$ of geometric genus $g$ in the linear system $|d\tau|$ is a flat limit of irreducible nodal curves of geometric genus $g$.
\end{proposition}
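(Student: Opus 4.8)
The guiding idea is to deform not the curve $C$ itself but the map that normalises it, since the geometric genus of the image of a fixed smooth curve is automatically preserved as long as the map stays birational onto its image. Write $n \from \td C \to C$ for the normalisation, so $\td C$ is smooth of genus $g$, and let $\nu \from \td C \to \F_m$ be the composite $\td C \xrightarrow{n} C \hookrightarrow \F_m$, which is birational onto $C$. Set $\alpha = \pi \circ \nu \from \td C \to \P^1$; it has degree $d$, since $C$ has degree $d$ over $\P^1$. Because $C$ is irreducible of class $d\tau$ and $\tau$ is disjoint from the directrix $\sigma$, we have $C \cdot \sigma = d\,\tau \cdot \sigma = 0$, so $C$ is disjoint from $\sigma$ and $\nu$ factors through $\F_m \setminus \sigma$. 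This disjointness is the source of the positivity we will exploit: although $d\tau$ merely contracts $\sigma$, it is very ample away from $\sigma$, which is exactly where $C$ lives.

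I would then produce the degeneration from two inputs. The first is local and classical: at each singular point $p$ of $C$ the singularity admits an equigeneric (that is, $\delta$-constant) one-parameter deformation whose general fibre consists of $\delta_p$ nodes, where $\delta_p$ is the local delta-invariant. The second is global: these local degenerations must be realised simultaneously by an honest deformation of $C$ inside $|d\tau|$. The clean way to package this is to deform the map $\nu$ rather than the subscheme $C$. First-order deformations of $\nu$ are governed by $H^0(\td C, N_\nu)$ and obstructions by $H^1(\td C, N_\nu)$, where $N_\nu$ is the normal sheaf of $\nu$; crucially, any deformation of the map keeps the source a smooth genus $g$ curve, so the deformed image again has geometric genus exactly $g$, with no equigenericity condition to impose by hand. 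I would show that the restriction of $H^0(\td C, N_\nu)$ to the local deformation spaces at the points of $\mathrm{Sing}(C)$ surjects onto the equigeneric tangent directions produced by the first input, and that the relevant obstruction vanishes; this is where the positivity of $d\tau$ along $C$, equivalently the disjointness $C \cap \sigma = \emptyset$, enters, forcing the pertinent $H^1$ to vanish.

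Granting this, I obtain a deformation of $\nu$ over a disc whose central fibre is $\nu$ and whose general fibre is a map, still birational onto its image, whose image $C'$ has each singularity of $C$ replaced by its $\delta_p$ nodes. Thus $C'$ is an irreducible nodal curve, and its geometric genus is $p_a(C) - \sum_p \delta_p = g$, unchanged. Taking scheme-theoretic images gives a family of curves inside $|d\tau| \cong \P^N$; since every member has the same Hilbert polynomial the family is automatically flat, its special fibre is $C$, and its general fibre is an irreducible nodal curve of geometric genus $g$. This exhibits $C$ as the desired flat limit and places $[C]$ in $\V_g(\F_m, d\tau)$.

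The main obstacle is precisely the global realisation in the second input, together with the accompanying general position statement that the general deformation is an immersion with only nodes. The delicate feature is that the image is forced to remain singular, since $p_a(C) > g$; one therefore cannot smooth the singularities away but must instead control their type, ruling out cusps, tacnodes, and ordinary triple points on the general member. Each of these amounts to the deforming linear series separating the appropriate points and tangent directions at the finitely many points of $\mathrm{Sing}(C)$, and it is exactly the positivity coming from $C \cdot \sigma = 0$ that makes the corresponding evaluation maps surjective, so that the generic coincidences among the sheets of $\alpha$ are transverse nodes and nothing worse.
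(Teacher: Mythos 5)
Your reduction---deforming the normalization map $\nu \from \td C \to \F_m$ rather than the curve, and noting that the geometric genus of the image is automatically $g$ as long as the map stays birational---is exactly the paper's starting point. But the two steps on which your argument actually rests, namely surjectivity of $H^0(\td C, N_\nu)$ onto the local equigeneric (nodal) deformation directions and vanishing of ``the relevant $H^1$,'' are asserted rather than proved, and the second does not follow from the positivity you invoke. Ampleness of $\nu^*O(d\tau)$ coming from $C \cap \sigma = \emptyset$ says nothing directly about $H^1(N_\nu)$: when $C$ has non-immersive singularities (cusps or worse), $N_\nu$ acquires a torsion subsheaf supported on the vanishing scheme $Z$ of $d\nu$, and the locally free quotient has degree $d(m+2) + 2g - 2 - \deg Z$. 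Since all one knows in general is that $Z$ sits inside the ramification of $\alpha = \pi \circ \nu$, Riemann--Hurwitz gives only $\deg Z \leq 2g+2d-2$, so the quotient's degree is bounded below only by $dm$, which is far below the threshold $2g-1$ in the range of $m$ relevant here; obstructedness cannot be ruled out this way. Likewise, surjectivity onto the equigeneric directions at the singular points is an adjoint-conditions statement---the conductor ideal must impose independent conditions on $|d\tau|$, cf.\ \autoref{sev:adjointconditions}, which the paper proves by multiplying adjoint sections by a section of $-K_{\F_m}$---not a consequence of $d\tau$ separating points and tangents away from $\sigma$. Your closing paragraph concedes that this global realisation is ``the main obstacle,'' which means the proposal is an outline of a strategy with its crucial steps missing, and one of them ($H^1(N_\nu)=0$) is not justifiable in this generality.

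The paper's proof sidesteps both difficulties: it never proves unobstructedness. It uses only the unconditional deformation-theoretic lower bound $\dim \mathcal M \geq \chi(N_\nu) = g - \deg(K_{\F_m}\cdot C) - 1$, valid for any component $\mathcal M$ of the Kontsevich space $\mathcal M_g(\F_m, d\tau)$ through $[\nu]$ whether or not $H^1(N_\nu)$ vanishes, and then cites \cite[Proposition~2.2]{joe:severi}: a family of genus-$g$ maps of at least this dimension has general member birational onto an image with only nodes. Ruling out cusps, tacnodes, and triple points on the general member---which you attempt by hand via evaluation maps---is precisely the content of that cited result, whose proof runs through the dimension estimate rather than through $H^1$-vanishing. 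To repair your argument, replace the two unproven claims by the $\chi(N_\nu)$ bound together with a Harris/Diaz--Harris-style analysis (or the conductor-conditions lemma); as written, the gap sits exactly at the step you flagged.
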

\begin{proof}
  Let $\overline C \subset \F_m$ be such a reduced and irreducible curve. Let $C \to \overline C$ the normalization. Denote by $\nu$ the composite map $\nu \from C \to \F_m$. Let $\mathcal M$ be a component of the Kontsevich space of maps $\mathcal M_{g}(\F_m, d\tau)$ containing $\nu$. Let $N_\nu$ be the normal sheaf of $\nu$; this is the cokernel of $T_C \to \nu^* T_{\F_m}$. Then, we have a lower bound: $\dim \mathcal M \geq \chi(N_\nu)$. Since
  \[ \chi(N_\nu) = \chi(\nu^* T_{\F_m}) - \chi(T_C) = g - \deg(K_{\F_m} \cdot \overline C) - 1,\]
  we get 
  \[\dim \mathcal M \geq g - \deg(K_{\F_m} \cdot \overline C) - 1.\]
  By \cite[Proposition~2.2]{joe:severi}, a general $\nu_{\rm gen} \from C_{\rm gen} \to \F_m$ in $\mathcal M$ is birational onto its image and the image has only nodes as singularities.
\end{proof}

We can make the construction in a family. Let $M$ be a reduced scheme, $\rho \from C \to M$ a generically smooth family of reduced and irreducible curves of genus $g$, and $\alpha \from C \to \P^1 \times M$ a finite flat $M$-morphism of degree $d$. Set $O_C(m) = \alpha^*O(m)$. Assume that none of the fibers $\alpha_t \from C_t \to \P^1$ factor nontrivially and $H^0(C_t, O_{C_t}(m))$ has constant rank. Then $\rho_*O_C(m)$ is a vector bundle on $M$. The trivial subbundle $H^0(\P^1, O(m)) \otimes O_M$ maps injectively to $\rho_*O_C(m)$. Let $U$ be the complement of the image of this map in the total space of $\rho_* O_C(m)$. Fiberwise, the sections of $U$ correspond to the sections $\zeta$ which project nontrivially onto $E_\alpha^\vee(m)$. Then the associated scroll construction gives a morphism 
\[U \to \V_g(\F_m,d\tau).\]
We will use this construction where $M$ is a Maroni locus. As described, the construction depends on the existence of a universal family, and thus gives a morphism from the fine moduli stack. But since $\V_g(\F_m,d\tau)$ is a scheme, we get a canonical induced map from the coarse space.

The following crucial result makes the above construction useful.
\begin{theorem}[\cite{tyomkin:irred}]\label{tyomkin:irred}
  All Severi varieties parametrizing irreducible curves on Hirzebruch surfaces are irreducible and of expected dimension. In particular, the variety $\V_g(\F_m,d\tau)$ is irreducible of dimension $dm+2d+g-1$.
\end{theorem}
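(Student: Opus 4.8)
The plan is to follow the standard two-step template for Severi-type problems: first pin down the dimension by deformation theory, and then prove irreducibility by degenerating to the maximally nodal (rational) stratum and running a monodromy/connectivity argument there. Since the statement is really about a single linear system, it suffices to treat $\V_g(\F_m, d\tau)$.

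First I would establish the expected dimension. The preceding proposition already supplies the lower bound $\dim \M \geq g - \deg(K_{\F_m}\cdot \overline C) - 1$ coming from $\chi(N_\nu)$ for a component $\M$ of the Kontsevich space $\M_g(\F_m, d\tau)$. A direct intersection computation on $\F_m$, using $\tau = \sigma + m f$ with $\sigma^2 = -m$, $\sigma \cdot f = 1$, $f^2 = 0$, and $K_{\F_m} = -2\sigma - (m+2)f$, gives $\deg(K_{\F_m}\cdot d\tau) = -(dm+2d)$, and hence the lower bound $dm + 2d + g - 1$. The same inputs give $\dim|d\tau| = dm(d+1)/2 + d$ and arithmetic genus $p_a(d\tau) = dm(d-1)/2 - d + 1$, so a genus $g$ member has $\delta = p_a - g$ nodes and the naive count $\dim|d\tau| - \delta$ again equals $dm + 2d + g - 1$. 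To upgrade the lower bound to an equality I would show that for a general member the $\delta$ nodes impose independent conditions on $|d\tau|$ — equivalently $H^1(\F_m, I_N(d\tau)) = 0$ for the reduced scheme $N$ of nodes — which forces the Severi variety to be smooth of the expected dimension at a general point; reducedness and nodality of the general image are supplied by \cite[Proposition~2.2]{joe:severi} exactly as in the preceding proposition.

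The harder half is irreducibility, for which I would argue in the style of Harris's solution of the Severi problem. One shows that the closure of every irreducible component $W$ of $\V_g(\F_m, d\tau)$ contains the stratum of rational ($g=0$) nodal curves, reached by specializing so as to acquire one additional node at a time. The local geometry of the Severi variety near a general maximally nodal curve is governed by independent smoothings of its $\delta_{\max}$ nodes, so the branches of the genus $g$ locus through such a point are indexed by which nodes are smoothed, and the monodromy action on the set of nodes permutes these branches. Consequently irreducibility of $\V_g(\F_m, d\tau)$ reduces to two statements about the rational stratum: that the family of rational curves in $|d\tau|$ is irreducible, and that the monodromy acts as the full symmetric group on the nodes of a general such curve, so that all smoothing branches are connected.

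The main obstacle is precisely this base case on $\F_m$, and it is where I expect to need the tropical/toric machinery underlying Tyomkin's theorem. The presence of the directrix $\sigma$ — which members of $|d\tau|$ avoid but their degenerations need not — means one cannot simply imitate the plane-curve degeneration into a union of lines. Instead I would degenerate $\F_m$ together with the linear system $|d\tau|$ to the toric boundary and count curves via the tropical correspondence theorem, reading off rational curves from lattice subdivisions of the trapezoidal Newton polygon associated to $(\F_m, d\tau)$. Connectivity of the relevant set of tropical curves, together with the transitivity of the reglueing moves relating adjacent subdivisions, simultaneously yields irreducibility of the rational stratum and transitivity of the nodal monodromy. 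Assembling the dimension count with this irreducibility gives that $\V_g(\F_m, d\tau)$ is irreducible of dimension $dm + 2d + g - 1$, as claimed.
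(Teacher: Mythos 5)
This statement is an external input: the paper gives no proof of it at all, but cites Tyomkin \cite{tyomkin:irred}, so your attempt cannot be compared with an argument in the text, only assessed on its own. Your expected-dimension half is correct and essentially complete in outline: the intersection numbers $\deg(K_{\F_m}\cdot d\tau)=-(dm+2d)$, $\dim|d\tau|=dm(d+1)/2+d$, and $p_a(d\tau)=dm(d-1)/2-d+1$ all check out, and the $H^1$-vanishing you need for the nodes is in fact available for \emph{every} reduced irreducible member, not just a general one, by the paper's own \autoref{sev:adjointconditions} (the conductor ideal of a nodal curve is exactly the ideal of its nodes, and $-K_{\F_m}$ is effective); this yields that every component of $\U_g(\F_m,d\tau)$ is smooth of the expected dimension, matching your lower bound from $\chi(N_\nu)$.

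The irreducibility half, however, has two genuine gaps. First, the crux of the Harris-style method --- that the closure of \emph{every} irreducible component contains the maximally nodal rational stratum, reached by acquiring one additional node at a time --- is asserted rather than argued. This is the hard step of the whole theorem: on $\F_m$ one must rule out degenerations that become reducible or non-reduced, or that pick up the directrix $\sigma$ (which members of $|d\tau|$ avoid but whose limits need not), and nothing in your sketch controls these; for $\P^2$ this step alone occupies the bulk of Harris's proof. Second, your base case --- irreducibility of the rational stratum and full symmetric monodromy on the nodes --- is deferred to ``the tropical/toric machinery underlying Tyomkin's theorem,'' i.e., to the very result being proved, so as written the argument is circular; a self-contained treatment would have to carry out the degeneration and monodromy analysis in full, which is precisely the content of \cite{tyomkin:irred}. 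A smaller point: the theorem as stated covers Severi varieties in arbitrary linear systems on Hirzebruch surfaces, not only $|d\tau|$, though restricting to $|d\tau|$ suffices for every use made of it in this paper. Given all this, the honest status of your proposal is a correct dimension count plus a plausible but unexecuted roadmap for irreducibility, whereas the paper's ``proof'' is, appropriately, the citation.
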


We also need the following result, which we prove for the lack of a reference.
\begin{proposition}\label{thm:severi_simply_branched}
  Let $\overline C \subset \F_m$ be a general point of $\V_g(\F_m,d\tau)$ and $C \to \overline C$ the normalization. Then the composite $C \to \P^1$ is simply branched.
\end{proposition}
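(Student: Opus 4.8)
The plan is to reduce the assertion to a single genericity property of the branch divisor and then establish that property by a deformation argument that is uniform in $m$. Let $\overline C$ be a general point of $\V_g(\F_m,d\tau)$. By \autoref{tyomkin:irred} together with the preceding proposition, $\overline C$ is irreducible with only nodes, so its normalization $\nu \from C \to \F_m$ is an immersion and the composite $\alpha = \pi\circ\nu \from C \to \P^1$ is a degree $d$ cover with $C$ smooth of genus $g$. The key observation is that $\alpha$ is simply branched exactly when its branch divisor $B = \alpha_* R \subset \P^1$ is reduced: by Riemann--Hurwitz $\deg B = \deg R = 2g+2d-2$, and $B$ consists of that many distinct points precisely when every ramification point of $\alpha$ is simple and no two ramification points lie over a common point of $\P^1$. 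Since being simply branched is an open condition and $\V_g(\F_m,d\tau)$ is irreducible, it suffices to exhibit one member whose branch divisor is reduced.

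The engine of the argument is a positivity estimate. Since $\nu$ is an immersion with nodal image, its normal sheaf $N_\nu$ is a line bundle on $C$, and a Chern class computation gives $\deg N_\nu = -K_{\F_m}\cdot\overline C - (2-2g) = d(m+2)+2g-2$. As $d\geq 3$ and $m\geq 0$ force $d(m+2)\geq 6$, we get $\deg N_\nu \geq 2g+4$, so $N_\nu$ is very ample and separates $2$-jets, while $H^1(C,N_\nu)=0$; the vanishing recovers the fact that $\V_g(\F_m,d\tau)$ is smooth of the expected dimension $\chi(N_\nu)=dm+2d+g-1$ at $\overline C$, with unobstructed deformations and tangent space $H^0(C,N_\nu)$. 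I would then identify the ramification divisor $R$ as the vanishing locus of the induced map of line bundles $\nu^*T_\pi \to N_\nu$, where $T_\pi$ is the relative tangent bundle of $\pi\from\F_m\to\P^1$; equivalently $R$ is the zero scheme of a section of $N_\nu\otimes\nu^*\omega_\pi$, a line bundle of degree $2g+2d-2$, and $B=\alpha_*R$.

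With this in hand I would produce the required member by deforming $\overline C$ within $\V_g(\F_m,d\tau)$. If $B$ is already reduced we are done; otherwise fix a branch point at which $B$ has multiplicity at least two, arising either from a higher-order ramification (a point where $R$ is non-reduced) or from two distinct ramification points with the same image. A normal deformation $v\in H^0(C,N_\nu)$ perturbs the defining section of $R$ by an expression linear in $v$ and its first derivative; because $N_\nu$, and hence $N_\nu\otimes\nu^*\omega_\pi$, separates $2$-jets, one can choose $v$ so as to split the offending multiple point of $B$. As the deformations are unobstructed, this first-order deformation is realized by actual members of $\V_g(\F_m,d\tau)$, and after finitely many such steps we reach a member with reduced branch divisor. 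This produces the desired point and completes the reduction.

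The main obstacle is making the third step precise: one must verify that the first-order variation of the section cutting out $R$, as $\nu$ ranges over $H^0(C,N_\nu)$, surjects onto the relevant jets of $N_\nu\otimes\nu^*\omega_\pi$ at the bad branch point, so that the multiplicity can genuinely be lowered. I expect this to follow from carefully relating the variation to $v$ and its covariant derivative and invoking $2$-jet separation, but it is the only point requiring real work. A secondary technicality to dispatch is that, for the general nodal $\overline C$, the fibers of $\pi$ through the nodes meet both branches transversely and so contribute no spurious ramification; this is itself an open and nonempty condition, hence holds generically by \autoref{tyomkin:irred}.
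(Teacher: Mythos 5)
Your reduction is the same as the paper's: by \autoref{tyomkin:irred} and the openness of simple branching, it suffices to exhibit a single member of $\V_g(\F_m,d\tau)$ whose normalization is simply branched; and your bookkeeping is correct ($\deg N_\nu = d(m+2)+2g-2$, the ramification divisor $R$ is cut out by the section of $N_\nu\otimes\nu^*\omega_\pi$ induced by $\nu^*T_\pi \to N_\nu$, and $\deg(N_\nu\otimes\nu^*\omega_\pi)=2g+2d-2$). But the existence argument is incomplete at exactly the point you flag yourself: you never verify that the first-order variation $v \mapsto \delta R(v)$, a first-order differential operator from $H^0(C,N_\nu)$ to jets of $N_\nu\otimes\nu^*\omega_\pi$ at the bad point $p$, surjects onto $1$-jets at $p$. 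That surjectivity is the whole content of your proof, and it is not a formal consequence of $2$-jet separation of $N_\nu$: a priori the operator could degenerate precisely at ramification points, which are exactly the points where the fiber of $\pi$ is tangent to $C$ and where the usual graph description of a normal deformation breaks down. ``I expect this to follow'' leaves the proposition unproved. (The local computation can in fact be done: near a ramification point write $C$ as a graph $x=g(y)$ over the fiber coordinate $y$, realize a normal deformation as $x = g(y)+\epsilon v(y)$, and observe that the ramification section varies by $v'$; since $\deg N_\nu \geq 2g+4$ gives $H^1(C, N_\nu(-3p))=0$, one can prescribe the $2$-jet of $v$ at $p$ and hence the $1$-jet of the variation. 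But this must be written out, and your ``secondary technicality'' about fibers through the nodes likewise needs a nonemptiness argument, not just the observation that the condition is open.)

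For contrast, the paper avoids the deformation-theoretic machinery entirely: it starts with $X=\P^1$ and a simply branched degree $d$ cover $\alpha\from X\to\P^1$, notes $E_\alpha = O(1)^{\oplus(d-1)}$, maps $X$ into $\F_m$ by a general section of $E_\alpha^\vee(m)$ to get a nodal rational curve in $|d\tau|$, and then smooths the appropriate number of nodes --- possible because the nodes impose independent conditions on $|K_{\F_m}+d\tau|$, hence on $|d\tau|$ --- to reach geometric genus $g$, with simple branching inherited by the general fiber of the smoothing. That route needs no analysis of how $R$ moves under normal deformations; yours, if you supply the local computation above, would prove the same statement, but at the cost of the one step you left open.
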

\begin{proof}
  In light of \autoref{tyomkin:irred}, it suffices to exhibit a particular $\overline C$ of geometric genus $g$ in $\V_g(\F_m,d\tau)$ whose normalization is simply branched over $\P^1$. One way is to start with $X = \P^1$ and $\alpha \from X \to \P^1$ a simply branched cover of degree $d$. Then $E_\alpha = O(1)^{\oplus (d-1)}$. Choosing a general section of $E^\vee_\alpha(m)$ gives $\nu \from X \to \F_m$ such that $\nu(X)$ is nodal. It is easy to see that $\nu(X)$ is in the closure of $\V_g(\F_m,d\tau)$. Indeed, since the set of nodes of $\nu(X)$ impose independent conditions on $|K_{\F_m} + d\tau|$, they automatically impose independent conditions on $|d\tau|$ as well, and hence we may smooth out the required number of nodes of $\nu(X)$ to deform to a curve of geometric genus $g$. A general fiber of such a smoothing is the required $\overline C$.
\end{proof}

\begin{remark}\label{geometricconst} We can realize the associated scroll construction geometrically as follows.  The choice of a general global section $\zeta$ of $O_{C}(m)$ can  be thought of as a choice of a geometric section $\sigma \from \P^1 \to \P E$.  In the $\P^{d-2}$ fibers of $\pi \from  \P E  \to \P^1$, we now have $d+1$ points: $d$ points coming from the fibers of the map $\alpha \from C \to \P^1$, and one more point provided by the section $\sigma$. For general $t \in \P^1$, these $d+1$ points will be in general position, and so will define a unique rational normal curve $R_{t} \subset \P E$. Consider the birationally ruled surface $S \subset \P E$ defined as the closure of the union of the $R_{t}$'s.  $S$ contains both $\sigma$ and $C$, and is fibered over $\P^1$.  We contract all components of the fibers of the projection $\pi \from S \to \P^1$ which do not meet the directrix $\sigma$.  The resulting surface is $\F_{m}$, with $\sigma$ being the directrix. The image of $C$ under the contraction $S \to \F_{m}$ is the associated scroll construction. 

\end{remark}

For a vector bundle $E = O(a_1) \oplus \dots \oplus O(a_n)$ on $\P^1$, set
\[ \lfloor E \rfloor = \min\{a_i\} \text{ and } \lceil E \rceil = \max\{a_i\}.\]
Given a cover $\alpha \from C \to \P^1$, the associated scroll construction $\nu \from C \to \F_m$ can be made for $m \geq \lfloor E_\alpha \rfloor$. Conversely, given a point $\overline C \in \V_g^\irr(\F_m,d\tau)$, let $C \to \overline C$ be the normalization. Then the induced cover $\alpha \from C \to \P^1$ has $\lfloor E_\alpha \rfloor \leq m$.

\begin{proposition}\label{thm:constraintsE} 
  If $\td M(E)$ is nonempty, then 
  \begin{equation}\label{eqn:highsummand}
    \lceil E \rceil \leq \frac{2g+2d-2}{d}.
  \end{equation}
  Furthermore, if $E_{\alpha}$ comes from a cover $[\alpha \from C \to \P^1]$, with $C$ irreducible, and where $\alpha$ does not factor nontrivially, then
  \begin{equation}\label{eqn:lowsummand}
    \frac{g+d-1}{{d \choose 2}} \leq \lfloor E_{\alpha} \rfloor \leq \frac{g+d-1}{d-1}.
  \end{equation}
\end{proposition}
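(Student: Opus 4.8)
The plan is to treat the three inequalities separately. Write $E=\bigoplus_{i=1}^{d-1} O(a_i)$ with $a_1\le\dots\le a_{d-1}$, so that $\lfloor E\rfloor=a_1$, $\lceil E\rceil=a_{d-1}$, and $\sum_i a_i=\deg E=g+d-1$ by \eqref{eqn:rk_deg_E}. The upper bound $\lfloor E\rfloor\le (g+d-1)/(d-1)$ in \eqref{eqn:lowsummand} is then immediate, since the minimum of $d-1$ integers is at most their average: $(d-1)a_1\le\sum_i a_i=g+d-1$.

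For the bound \eqref{eqn:highsummand} on $\lceil E\rceil$, I would extract a global section on $C$ from the top summand of $E$. By \eqref{dual structure sheaf sequence}, $E$ is a subsheaf of $\alpha_*\omega_\alpha$, so the inclusion of the summand $O(\lceil E\rceil)\subset E\subset\alpha_*\omega_\alpha$ is a nonzero map $O(\lceil E\rceil)\to\alpha_*\omega_\alpha$. By the $(\alpha^*,\alpha_*)$-adjunction $\Hom_{\P^1}(O(\lceil E\rceil),\alpha_*\omega_\alpha)=\Hom_C(\alpha^*O(\lceil E\rceil),\omega_\alpha)=H^0(C,\omega_\alpha\otimes\alpha^*O(-\lceil E\rceil))$, this produces a nonzero global section of $\omega_\alpha\otimes\alpha^*O(-\lceil E\rceil)$ on $C$. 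Since $C$ is irreducible (it lies in $\td\H_{d,g}^\dagger$), a nonzero section forces the degree of that line bundle to be nonnegative. As $\deg_C\omega_\alpha=(2g-2)+2d=2g+2d-2$ and $\deg_C\alpha^*O(-\lceil E\rceil)=-d\lceil E\rceil$, we get $2g+2d-2-d\lceil E\rceil\ge 0$, which is exactly \eqref{eqn:highsummand}.

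For the lower bound in \eqref{eqn:lowsummand}, I would run the associated scroll construction at the minimal admissible twist $m=\lfloor E\rfloor$. Twisting \eqref{structure sheaf sequence} by $O(m)$ shows that $H^0(C,O_C(m))\to H^0(E^\vee(m))$ is surjective and that $H^0(E^\vee(m))=\bigoplus_i H^0(O(m-a_i))\ne 0$ precisely because $m=a_1$; thus there is a section $\zeta$ of $O_C(m)$ not pulled back from $\P^1$, and the construction of \autoref{sec:ass_scroll} yields $\nu\from C\to\F_m$. Because $C$ is irreducible and $\alpha$ does not factor nontrivially, the element $\zeta\notin\C(\P^1)$ generates $\C(C)$ over $\C(\P^1)$, so $\nu$ is birational onto its image; as in \autoref{sec:ass_scroll} the image $\nu(C)$ avoids the directrix and meets each fiber in $d$ points, hence lies in $|d\tau|$. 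By adjunction on $\F_m$, every curve in $|d\tau|$ has arithmetic genus $\binom{d}{2}m-(d-1)$. Since $\nu$ is birational onto its image, $p_a(\nu(C))\ge p_a(C)=g$, and therefore $\binom{d}{2}m-(d-1)\ge g$, which rearranges to the left inequality of \eqref{eqn:lowsummand}.

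The routine inputs are the adjunction identity and the adjunction genus computation on $\F_m$; the one step that needs care is the inequality $p_a(\nu(C))\ge p_a(C)$, and this is the crux. The subtlety is that $\nu$ need not be an embedding, so a priori $\nu(C)$ could be \emph{smoother} than $C$, which would only give a bound in terms of the geometric genus. The resolution is that $\nu$ is birational onto its image, so $C$ and $\nu(C)$ have the same normalization $\td C$; writing $p_a=g(\td C)+\delta$ for the total $\delta$-invariant and using the inclusion $O_{\nu(C)}\hookrightarrow\nu_*O_C$ inside $\pi_*O_{\td C}$, one gets $\delta_{\nu(C)}\ge\delta_C$ and hence $p_a(\nu(C))\ge p_a(C)$. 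This is exactly what lets us compare against the \emph{arithmetic} genus $g$ of $C$, so that the bound comes out with $g$ rather than the geometric genus.
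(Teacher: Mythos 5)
Your proof is correct, and for the key inequality $\frac{g+d-1}{\binom{d}{2}} \leq \lfloor E_\alpha \rfloor$ it follows the paper's route exactly: the associated scroll construction at $m = \lfloor E_\alpha \rfloor$, birationality of $\nu$ from the non-factoring hypothesis, adjunction on $\F_m$ to get $p_a(\nu(C)) = \binom{d}{2}m - (d-1)$, and the comparison $g \leq p_a(\nu(C))$. Two points of difference are worth noting. First, for \eqref{eqn:highsummand} the paper argues by intersection theory on $\P E$: the class $h - \lceil E \rceil f$ is effective, the Casnati--Ekedahl resolution shows $C$ is not contained in any hyperplane divisor, and $(h - \lceil E\rceil f)\cdot [C] \geq 0$ gives the bound. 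Your sheaf-theoretic version --- a nonzero map $O(\lceil E\rceil) \to \alpha_*\omega_\alpha$ from \eqref{dual structure sheaf sequence}, adjunction, and nonnegativity of the degree of a line bundle with a nonzero section on an integral curve --- is the same computation in different language, and is if anything slightly more economical, since it needs only the injectivity of $E \to \alpha_*\omega_\alpha$ rather than nondegeneracy of $C$ in $\P E$. Second, you correctly identify that the comparison must be with the \emph{arithmetic} genus of the (possibly nodal) $C$, and your $\delta$-invariant argument via $O_{\nu(C)} \hookrightarrow \nu_*O_C \hookrightarrow \mu_*O_{\td C}$ (with $\mu \from \td C \to \nu(C)$ the common normalization) is a genuine value-add: the paper asserts $g \leq p_a(\nu(C))$ without proof, and your argument is exactly the justification needed. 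One small lacuna: your surjectivity of $H^0(C, O_C(m)) \to H^0(E^\vee(m))$ uses $H^1(\P^1, O(m)) = 0$, i.e.\ $m \geq -1$; this is fine because integrality of $C$ gives $h^0(E^\vee) = 0$ (from $h^0(O_C) = 1$ and the sequence \eqref{structure sheaf sequence}), so every summand of $E$ has positive degree --- but you should say so, since a priori $\lfloor E \rfloor$ could be negative.
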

\begin{proof}
The resolution of $O_C$ in \autoref{thm:CE} tells us that $C \subset \P E_{\alpha}$ is not contained in any hyperplane divisor. Let $h$ denote the hyperplane divisor class associated to $O_{\P E_{\alpha}}(1)$, and let $f$ denote the class of the fiber of $\pi \from \P E \to \P^1$. Set $N := \lceil E_{\alpha} \rceil$.  Then the divisor class $h-Nf$ is effective.  Since $C$ is irreducible and does not lie in $(h-Nf)$, it intersects $(h-Nf)$ non-negatively. Since $h \cdot [C] = 2g+2d-2$, and $f \cdot [C] = d$, we conclude that $N \leq \frac{2d+2g-2}{d}$. 

For the second inequality, we appeal to the associated scroll construction.  Let $n := \lfloor E_{\alpha} \rfloor$. Since $\alpha$ does not factor, $\nu \from C \to \F_n$ must be birational onto its image. Adjunction on $\F_n$ gives
\[p_{a}(\nu(C)) = {d \choose 2}n - (d-1).\]
The second statement now follows from the inequality $g \leq p_{a}(\nu(C))$.
\end{proof}

The following theorem of Ohbuchi \cite{ohbuchi:relations} places a strong restriction on a large class of Tschirnhausen bundles $E$.
\begin{proposition}[\cite{ohbuchi:relations}]
  \label{thm:ohbuchi}
  Let $\alpha \from C \to \P^1$ be a cover of degree $d$, with $C$ irreducible, and where $\alpha$ does not factor nontrivially. Write $E_{\alpha} = O(a_1) \oplus \dots \oplus O(a_{d-1})$ where $\lfloor E_\alpha \rfloor =  a_1 \leq a_2 \leq \dots \leq a_{d-1} = \lceil E_\alpha \rceil$. 
  Then 
  \begin{equation}\label{eqn:tame}
a_{i+1} - a_{i} \leq \lfloor E_\alpha \rfloor \quad \text{for $1 \leq i \leq d-2$}.
\end{equation}
\end{proposition}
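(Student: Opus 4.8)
Following Ohbuchi's original approach, the plan is to extract the inequality from the associated scroll construction, exploiting that the minimal choice $m = \lfloor E_\alpha \rfloor$ produces the tightest possible Hirzebruch surface. Before treating general $d$ I would record the mechanism in the degree-three case, which is completely transparent and already contains every ingredient. When $d = 3$ the scroll $\P E_\alpha$ is itself the Hirzebruch surface $\F_M$ with $M = a_2 - a_1$ and directrix $\sigma$, $\sigma^2 = -M$, and $C$ is a smooth irreducible curve of genus $g$ in the class $3\sigma + bf$. Irreducibility gives $C \cdot \sigma \geq 0$, hence $b \geq 3M$, while adjunction computes $g = p_a(C) = 2b - 3M - 2$. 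Combining the two yields $M \leq (g+2)/3$, and feeding in $a_1 + a_2 = \deg E_\alpha = g+2$ converts this into $a_2 - a_1 \leq a_1 = \lfloor E_\alpha \rfloor$. Thus the $d=3$ case is the classical Maroni bound, proved by irreducibility plus adjunction plus the normalization $\sum a_j = g+d-1$.

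For general $d$ the obstruction to repeating this verbatim is that $C$ is nondegenerate in $\P E_\alpha$ and so lies on no surface. The remedy is the construction of \autoref{geometricconst} with $m = \lfloor E_\alpha \rfloor =: a_1$: one sweeps out the ruled surface $S \subset \P E_\alpha$ whose fibers are the degree-$(d-2)$ rational normal curves $R_t$ through the $d$ points of $C_t$ together with the extra point $\sigma(t)$, and contracts $S$ to the Hirzebruch surface $\F_{a_1} = \P(W)$, whose invariant forces the two summands of the rank-two bundle $W$ to differ by $a_1$. Restricting the tautological surjection $\pi^* E_\alpha \twoheadrightarrow O_{\P E_\alpha}(1)$ to $S$ and pushing forward along $S \to \P^1$ produces a map of rank-$(d-1)$ bundles $E_\alpha \to \Sym^{d-2}W \otimes O(\beta)$ for a suitable twist $\beta$; on a general fiber it is the restriction $H^0(\P E_{\alpha,t}, O(1)) \to H^0(R_t, O(d-2))$, an isomorphism precisely because $R_t$ is a rational normal curve spanning the fiber. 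Hence $E_\alpha$ embeds in $F := \Sym^{d-2}W \otimes O(\beta)$ as a finite-colength subsheaf, where $F$ has all its consecutive summands differing by exactly $a_1 = \lfloor E_\alpha \rfloor$. Non-factoring enters twice: it guarantees (as in \autoref{geometricconst}) that $\nu$ is birational, so the construction is valid, and it is what will ultimately constrain where $E_\alpha$ and $F$ disagree.

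From here the proposition becomes a comparison of splitting types. The subsheaf relation $E_\alpha \subset F$ already gives $\lceil E_\alpha \rceil \leq \lceil F \rceil$, and combined with the summand bounds \eqref{eqn:highsummand} and \eqref{eqn:lowsummand} of \autoref{thm:constraintsE} one obtains $\lceil E_\alpha \rceil - \lfloor E_\alpha \rfloor \leq (d-2)\lfloor E_\alpha \rfloor$; since $E_\alpha$ has $d-2$ consecutive gaps, this is exactly the assertion that the \emph{average} gap is at most $\lfloor E_\alpha \rfloor$. The real content of the proposition is to upgrade this averaged estimate to the bound on each individual gap $a_{i+1} - a_i$.

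The main obstacle is precisely this upgrade, since a finite-colength subsheaf of a bundle with equal gaps can a priori acquire a larger gap (an elementary modification concentrated at one summand lowers it and widens a neighbouring gap). To rule this out I would analyze the locus where $E_\alpha \to F$ drops rank — the special fibers of $S$ over which the $d+1$ marked points fail to be in general position and $R_t$ degenerates — and show that irreducibility of $C$ together with non-factoring forces these modifications to be spread out rather than concentrated. Equivalently, a gap $a_{i+1}-a_i > a_1$ should be shown to force $C$ to meet the subscroll $\P(O(a_{i+1}) \oplus \dots \oplus O(a_{d-1}))$ in a manner incompatible with its being an irreducible, nondegenerate, nonfactoring cover. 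Carrying out this last step cleanly — controlling the degeneration locus of the ruling and converting it into the per-gap bound — is the crux, and is where I expect the genuine difficulty to lie.
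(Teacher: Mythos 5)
First, a point of comparison: the paper itself contains no proof of this proposition --- it is imported wholesale from \cite{ohbuchi:relations} --- so your attempt has to stand on its own, and as written it does not. What you actually establish is (i) the case $d=3$, where the computation is correct (irreducibility gives $C\cdot\sigma\geq 0$ on $\F_M$, and adjunction together with $a_1+a_2=g+2$ yields $a_2-a_1\leq a_1$), and (ii) the averaged inequality $\lceil E_\alpha\rceil-\lfloor E_\alpha\rfloor\leq (d-2)\lfloor E_\alpha\rfloor$. But (ii) is an immediate arithmetic consequence of \eqref{eqn:highsummand} and \eqref{eqn:lowsummand} alone: one has $\lceil E\rceil-\lfloor E\rfloor\leq \frac{2(g+d-1)}{d}-\frac{2(g+d-1)}{d(d-1)}=(d-2)\cdot\frac{2(g+d-1)}{d(d-1)}\leq (d-2)\lfloor E\rfloor$, so the subsheaf relation you labor to construct buys nothing at that stage. (The subsheaf relation itself is correct and can be obtained more cleanly than by your sweeping-surface bookkeeping: with $\overline C=\nu(C)\in|d\tau|$ on $\F_{a_1}$, the inclusion $\alpha'_*O_{\overline C}\subset \alpha_*O_C$ coming from normalization is compatible with the trivial subsheaves and induces $E_\alpha\hookrightarrow E_{\overline C}$ with torsion cokernel of length $p_a(\overline C)-g$, where $E_{\overline C}=O(a_1)\oplus O(2a_1)\oplus\dots\oplus O((d-1)a_1)$ has all consecutive gaps exactly $a_1$.)

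The genuine gap is exactly where you place it, and it is the entire content of the proposition. A full-rank subsheaf $E\subset F$ with torsion quotient imposes only the dominance constraints $\sum_{\text{top }j}a_i\leq\sum_{\text{top }j}f_i$ (from $\wedge^j E\hookrightarrow\wedge^j F$); it does not bound individual gaps, and with colength $\delta=\binom{d}{2}a_1-(d-1)-g$ available, a single concentrated elementary modification can open a gap far exceeding $a_1$. Your proposed mechanism for ruling this out --- that irreducibility and non-factoring force the modifications to be ``spread out'' --- is not even literally plausible, since the torsion quotient $E_{\overline C}/E_\alpha$ can perfectly well be supported over a single point of $\P^1$ (the nodes of $\overline C$ may all lie in one fiber); whatever excludes a large gap must be a structural property of the specific modifications produced by the conductor of $\overline C$, interacting with non-factoring, and nothing in your final paragraph identifies that property or indicates how to prove it. So the proposal proves \eqref{eqn:tame} only for $d=3$, reproves a strictly weaker bound already contained in \autoref{thm:constraintsE} for general $d$, and defers the actual theorem to an unproven claim: it is a research plan, not a proof.
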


\begin{remark}
\autoref{thm:ohbuchi} implies the second inequality in \autoref{thm:constraintsE}.
\end{remark}

\begin{definition}
  We call a vector bundle $E$ on $\P^1$ of rank $d-1$ and degree $g+d-1$ \emph{tame} if it satisfies the inequalities \eqref{eqn:highsummand}, \eqref{eqn:lowsummand}, and \eqref{eqn:tame}.
\end{definition}
Notice that \autoref{thm:constraintsE} and \autoref{thm:ohbuchi} imply that $E_\alpha$ is tame in the following two cases: $\alpha$ is simply branched, or $d$ is prime. Indeed, in either case, the cover cannot factor non-trivially.

Denote by $\leadsto$ the partial order on vector bundles on $\P^1$ given by $E \leadsto E'$ if $E$ specializes to $E'$ in a flat family. Define the finite set ${\mathcal T}[m]$ by
\[ {\mathcal T}[m] := \{\text{Isomorphism classes of tame bundles $E$ of rank $d-1$, degree $g+d-1$, and $\lfloor E \rfloor = m$}\}.\]
Observe that $\mathcal T[m]$ contains an element $E[m]$ such that $E[m] \leadsto E$ for all $E \in \mathcal T[m]$. In other words, $E[m]$ is the most generic among all the bundles in $\mathcal T[m]$.
\begin{theorem}\label{thm:maroniloci}
  Let $m$ be an integer satisfying $\frac{g+d-1}{{d \choose 2}} \leq m \leq  \frac{g+d-1}{d-1} $.
  \begin{enumerate}
  \item If $M(E)$ is nonempty, then $E$ is a tame bundle.
  \item If $\lfloor E \rfloor \leq m$ then $M(E) \subset M(E[m])$.
  \item $M(E[m]) \subset M(E[m+1])$ for all $m$.   
  \item $M(E[m])$ is an irreducible subvariety of $\H_{d,g}^\dagger$ of codimension $g-(d-1)m + 1$ unless $m = \lfloor \frac{g+d-1}{d-1} \rfloor$, in which case $M(E[m]) =  \H_{d,g}^\dagger$.
  \item If $d$ is prime, then all the statements above hold with $M(-)$ replaced by $\td M(-)$ and $\mathcal H^\dagger_{d,g}$ replaced by $\td {\mathcal H}^\dagger_{d,g}$.
  \end{enumerate}
\end{theorem}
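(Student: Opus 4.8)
The engine for the whole theorem is the associated scroll construction together with Tyomkin's irreducibility theorem (\autoref{tyomkin:irred}), and part (1) comes essentially for free. A nonempty $M(E)$ contains a simply branched cover $[\alpha \from C \to \P^1]$ with $E_\alpha \cong E$, and a simply branched cover cannot factor nontrivially. Thus \eqref{eqn:highsummand} follows from the first assertion of \autoref{thm:constraintsE} (using $M(E) \subseteq \td M(E)$), while \eqref{eqn:lowsummand} and \eqref{eqn:tame} follow from the second assertion of \autoref{thm:constraintsE} and from \autoref{thm:ohbuchi}. Hence $E$ is tame.

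For (2) and (3) I would identify $M(E[m])$ with the image of the Severi variety under normalization. Normalizing a point of $\V_g(\F_m,d\tau)$ and projecting to $\P^1$ yields a cover $\alpha$ with $\lfloor E_\alpha\rfloor \leq m$, which is simply branched for a general point by \autoref{thm:severi_simply_branched}; this defines a rational map $\V_g(\F_m,d\tau) \dashrightarrow \H_{d,g}^\dagger$. Conversely, any cover with $\lfloor E_\alpha\rfloor \leq m$ admits the scroll construction with parameter $m$ (as $E_\alpha^\vee(m)$ then has a section), so it lies in the image. Hence the image is exactly the closure of $\{[\alpha] : \lfloor E_\alpha\rfloor \leq m\}$, and it is irreducible because $\V_g(\F_m,d\tau)$ is. Identifying its generic Tschirnhausen bundle with $E[m]$ (next paragraph) shows the image equals $M(E[m])$. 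In particular $M(E) \subseteq M(E[m])$ whenever $\lfloor E\rfloor \leq m$, which is (2); applying (2) to $E = E[m]$ with parameter $m+1$ (valid since $\lfloor E[m]\rfloor = m \leq m+1$) gives $M(E[m]) \subseteq M(E[m+1])$, which is (3).

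The heart of the matter is the dimension count in (4), which I would carry out through the morphism $U \to \V_g(\F_m,d\tau)$ from the total space $U$ of nonzero sections $\zeta$ over $M(E[m])$. On one hand $U$ fibers over $M(E[m])$ with fiber the open set of valid sections inside $H^0(C,O_C(m))$, so generically $\dim U = \dim M(E[m]) + h^0(C,O_C(m))$, where $h^0(C,O_C(m)) = (m+1) + h^0(E_\alpha^\vee(m))$ because $\alpha_*O_C(m) = O(m)\oplus E_\alpha^\vee(m)$. On the other hand a point $\overline C \in \V_g(\F_m,d\tau)$ recovers the pair $(\alpha,\zeta)$ up to finite ambiguity, so $U \to \V_g(\F_m,d\tau)$ is dominant and generically finite, whence $\dim U = \dim \V_g(\F_m,d\tau) = dm+2d+g-1$. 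The remaining point is that for the generic cover $\lfloor E_\alpha\rfloor = m$ and $E_\alpha \cong E[m]$, so that $h^0(E_\alpha^\vee(m)) = \#\{i : a_i = m\} = 1$ whenever $m < \lfloor (g+d-1)/(d-1)\rfloor$. Genericity of $\lfloor E_\alpha\rfloor = m$ holds because $\lfloor E_\alpha\rfloor \leq m-1$ would force $\overline C$ into the image of the lower-dimensional $\V_g(\F_{m-1},d\tau)$, a proper subvariety; tameness (part (1)) then forces the generic bundle to be the most balanced tame bundle with smallest summand $m$, namely $E[m]$, and a direct count of summands gives $h^0(E[m]^\vee(m)) = 1$ in the stated range. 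Combining, $\dim M(E[m]) = (dm+2d+g-1) - ((m+1)+1)$, which yields codimension $g-(d-1)m+1$, and irreducibility is inherited from $\V_g(\F_m,d\tau)$. When $m = \lfloor(g+d-1)/(d-1)\rfloor$, the bundle $E[m]$ is the balanced bundle carried by a general cover, so $M(E[m]) = \H_{d,g}^\dagger$.

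Finally, (5) should follow by rerunning (1)--(4) verbatim in $\td\H_{d,g}^\dagger$. The only property of $\H_{d,g}^\dagger$ used above beyond generalities is that its covers do not factor nontrivially --- this is what powers \autoref{thm:constraintsE}, \autoref{thm:ohbuchi}, and the birationality of $\nu$ in the scroll construction. For prime $d$ no cover factors nontrivially regardless of branching, so every step survives; the target $\V_g(\F_m,d\tau)$ and its dimension are unchanged, as is $\dim\td\H_{d,g}^\dagger = 2g+2d-2$. I expect the main obstacle to be the genericity statement inside (4): proving that the generic Tschirnhausen bundle over $\V_g(\F_m,d\tau)$ is exactly $E[m]$, and not some more balanced bundle, since this single fact pins down both the identification in (2)--(3) and the value $h^0(E_\alpha^\vee(m)) = 1$ that drives the codimension. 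The boundary case $m=\lfloor(g+d-1)/(d-1)\rfloor$, where several summands equal $m$ and $h^0(E[m]^\vee(m))>1$, also needs to be handled separately.
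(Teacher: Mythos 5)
Your overall architecture---the associated scroll construction into $\F_m$, Tyomkin's irreducibility theorem, and the dimension count $\dim M(E[m]) = \dim \V_g(\F_m,d\tau) - (m+2)$---is the same as the paper's, and parts (1), (3), (5) and the boundary case $m = \lfloor (g+d-1)/(d-1)\rfloor$ of (4) match the paper's treatment. But there is a genuine gap at precisely the step you flag as ``the main obstacle'': identifying the Tschirnhausen bundle of the normalization of a \emph{general} point of $\V_g(\F_m,d\tau)$ with $E[m]$. Your argument for this is that ``tameness forces the generic bundle to be the most balanced tame bundle with smallest summand $m$, namely $E[m]$.'' Semicontinuity gives only one direction: the generic bundle $E_0$ over the irreducible $\V_g(\F_m,d\tau)$ is the most generic among those that \emph{actually occur}, so $E[m] \leadsto E_0$; to conclude $E_0 = E[m]$ you must know that $E[m]$ occurs at all, i.e., that some genus-$g$, degree-$d$ cover with Tschirnhausen bundle exactly $E[m]$ exists and yields a point of $\V_g(\F_m,d\tau)$. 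Nothing in your argument rules out that every cover arising from the Severi variety carries a strictly more special tame bundle. The paper fills this exactly with Coppens' existence theorem (\autoref{thm:coppens}), which you never invoke: it produces a cover with bundle $E[m]$ that is birational onto its image in $\F_m$, then deforms the resulting curve to one with simply branched normalization via \autoref{thm:severi_simply_branched}, using the maximality of $E[m]$ under $\leadsto$ to see that the bundle survives the deformation. Since your identification of the image of the Severi variety with $M(E[m])$ in (2), and the value $h^0(E[m]^\vee(m)) = 1$ driving the codimension in (4), both rest on this step, the gap propagates through the heart of the proof.

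Secondarily, your substitute argument that generically $\lfloor E_\alpha \rfloor = m$ because ``$\lfloor E_\alpha\rfloor \leq m-1$ would force $\overline C$ into the image of the lower-dimensional $\V_g(\F_{m-1},d\tau)$'' does not parse as stated: $\V_g(\F_{m-1},d\tau)$ lives on a different surface and admits no map to $\V_g(\F_m,d\tau)$. To make the comparison you would have to bound the dimension of the locus of pairs consisting of a cover with $\lfloor E_\alpha\rfloor \leq m-1$ together with a section of $O_C(m)$, where $h^0(E_\alpha^\vee(m))$ jumps for unbalanced bundles---essentially redoing the estimate of \autoref{maroni:codim}. Once the generic bundle is pinned down by Coppens' theorem, this detour is unnecessary: the paper's fibration $q \from U \to M(E[m])$ with $(m+2)$-dimensional fibers gives (4), and (2) follows by exhibiting the scroll image of any cover with $\lfloor E \rfloor \leq m$ as a limit of curves whose normalizations have bundle $E[m]$, with simultaneous normalization (as in the paper, via Teissier) converting this into a degeneration of covers, a point your sketch of the rational map $\V_g(\F_m,d\tau) \dashrightarrow \H^\dagger_{d,g}$ also elides.
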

In the proof, we use a theorem of Coppens, which we state using our setup.
\begin{theorem}[\cite{coppens_existence}]\label{thm:coppens}
  For all $m$ satisfying $\frac{g+d-1}{{d \choose 2}} \leq m \leq \frac{g+d-1}{d-1}$, there is a genus $g$ and degree $d$ cover $C \to \P^1$ with Tschirnhausen bundle $E[m]$. Moreover, $C$ is birational onto its image under the associated scroll construction $C \to \F_m$.
\end{theorem}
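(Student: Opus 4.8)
The plan is to realize the required cover as the normalization of a general member of the Severi variety $\V_g(\F_m, d\tau)$, running the associated scroll construction in reverse, and to pin down its Tschirnhausen bundle by a dimension count built on Tyomkin's theorem. First note that the hypothesis $\frac{g+d-1}{\binom{d}{2}} \le m$ is exactly the inequality $g \le \binom{d}{2}m - (d-1) = p_a(d\tau)$ (adjunction on $\F_m$, as in the proof of \autoref{thm:constraintsE}), so curves of geometric genus $g$ in $|d\tau|$ exist and, by \autoref{tyomkin:irred}, they form the irreducible variety $\V_g(\F_m, d\tau)$ of dimension $dm + 2d + g - 1$. Let $\overline C$ be a general point and $C \to \overline C$ its normalization. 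By \autoref{thm:severi_simply_branched} the induced cover $\alpha \from C \to \P^1$ is simply branched; in particular it does not factor, so $E_\alpha$ is tame, and the normalization $\nu \from C \to \overline C \subset \F_m$ --- which is nothing but the associated scroll construction applied to $\alpha$ together with the section of $O_C(m)$ cutting out the embedding --- is birational onto its image. This already gives the ``moreover'' clause; the real content is to show $E_\alpha \cong E[m]$. Since a point of $\V_g^\irr(\F_m, d\tau)$ yields a cover with $\lfloor E_\alpha\rfloor \le m$, and since $\V_g(\F_m, d\tau)$ is irreducible, the splitting type of $E_\alpha$ is constant on a dense open subset, equal to a generic type $E_0$ with $\lfloor E_0\rfloor \le m$ and $E_0$ tame.

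To prove $\lfloor E_0\rfloor = m$, I would compare the reverse construction for different values of $m$. Let $\mathcal I_m$ be the variety of pairs $(\alpha, \zeta)$ with $[\alpha] \in \H_{d,g}^\dagger$ and $\zeta \in H^0(C, O_C(m))$ projecting nontrivially to $E_\alpha^\vee(m)$; the associated scroll construction defines $\Phi_m \from \mathcal I_m \to \V_g(\F_m, d\tau)$. A curve $\overline C$ in the image determines both $\alpha$ (as $\pi|_{\overline C}$ precomposed with normalization) and $\zeta$ (as the fibre coordinate of the embedding), so $\Phi_m$ is generically injective; by the first paragraph it is dominant, hence birational, whence $\dim \mathcal I_m = dm + 2d + g - 1$. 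On the other hand, forgetting $\zeta$ exhibits $\mathcal I_m$ over its image $L := \overline{\{\lfloor E_\alpha\rfloor \le m\}} \subset \H_{d,g}^\dagger$ with generic fibre $H^0(C, O_C(m))$, so $\dim \mathcal I_m = \dim L + (m+1) + h^0(E_0^\vee(m))$.

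Now suppose $\lfloor E_0\rfloor = m' < m$. Then $L$ equals the closure of $\{\lfloor E_\alpha\rfloor \le m'\}$, so $\Phi_{m'}$ is likewise birational onto $\V_g(\F_{m'}, d\tau)$ over the same base $L$, and subtracting the two dimension identities yields $h^0(E_0^\vee(m)) - h^0(E_0^\vee(m')) = (d-1)(m - m')$. Writing $E_0 = \bigoplus O(a_i)$, each summand contributes at most $m - m'$ to the left-hand side, with equality for every $i$ only if all $a_i \le m'+1$; but then $E_0$ is balanced and $m' = \lfloor (g+d-1)/(d-1)\rfloor \ge m$, contradicting $m' < m$. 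Hence $\lfloor E_0\rfloor = m$, so $E_0 \in \mathcal T[m]$ and $E[m] \leadsto E_0$.

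The remaining, and genuinely hardest, step is to upgrade $E_0 \in \mathcal T[m]$ to $E_0 = E[m]$, i.e. to show that the non-minimal scrollar invariants of the general cover are as balanced as possible. The difficulty is intrinsic: by the genericity of $E[m]$ in $\mathcal T[m]$, if some cover of this numerical type had Tschirnhausen bundle exactly $E[m]$ then $E[m]\leadsto E_0$ would force $E_0 = E[m]$; conversely $E_0 = E[m]$ is precisely the assertion that $E[m]$ is realized. Thus this step cannot be extracted from the single-parameter scroll count above --- it \emph{is} the existence assertion of Coppens. I would supply it as Coppens does, by directly exhibiting one such cover: build $C$ inside the scroll $\P E[m] \to \P^1$ from a sufficiently general Casnati--Ekedahl datum of the type prescribed by \autoref{thm:CE}, or inductively by smoothing a nodal model with controlled scrollar invariants, and then check that the resulting bundle lies in no smaller sub-scroll (so $\lfloor\,\cdot\,\rfloor = m$) and is no more special than $E[m]$. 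Simultaneously controlling smoothness, the genus, and the exact splitting type is where the real work lies; the Severi--scroll machinery above then packages it into the clean statement, and as a byproduct the identity $\dim \mathcal I_m = \dim L + (m+1) + h^0(E_0^\vee(m))$ computes the dimension of the locus $L$.
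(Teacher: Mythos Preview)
The paper does not prove this statement at all: it is quoted as an external input, with the citation \cite{coppens_existence}, and is then \emph{used} (in the proof of \autoref{thm:maroniloci}) to show that the general point of $\V_g(\F_m,d\tau)$ has Tschirnhausen bundle $E[m]$. So there is no ``paper's own proof'' to compare against.

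Your proposal is not a proof either, and you say so yourself. The scroll/dimension argument you give establishes, correctly, that if $E_0$ denotes the Tschirnhausen bundle of the normalization of a general point of $\V_g(\F_m,d\tau)$, then $\lfloor E_0\rfloor = m$. (One small repair: your identification of the two bases $L$ for $m$ and $m'$ is not quite justified, but you only need the inequality $\dim \mathcal I_{m'}|_{L}\le \dim \V_g(\F_{m'},d\tau)$, which follows from generic injectivity of $\Phi_{m'}$ restricted to $L$; combined with the equality for $m$ this still forces all $a_i\le m'+1$ and hence the contradiction.) But the step from $\lfloor E_0\rfloor = m$ to $E_0 = E[m]$ is, as you correctly diagnose, exactly the existence assertion of Coppens: you need \emph{one} cover with bundle $E[m]$ in order to invoke semicontinuity against the generic $E_0$, and producing that one cover is the entire content of the theorem you are trying to prove. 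Gesturing at ``build $C$ inside $\P E[m]$ from a general Casnati--Ekedahl datum'' or ``smooth a nodal model'' is not a proof; the difficulty Coppens actually overcomes is precisely verifying that such a construction yields a \emph{smooth} curve whose Tschirnhausen bundle does not jump to something more special.

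In short: what you have written is essentially the argument the paper gives for \autoref{thm:maroniloci}(2) and (4), run in reverse and with Coppens' theorem excised---which is why it stalls at exactly that point. The honest summary is that the scroll count reduces the theorem to the existence of a single cover with bundle $E[m]$, and that existence must be supplied by Coppens' construction, which neither you nor the paper reproduces.
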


\begin{proof}[Proof of \autoref{thm:maroniloci}]
We repeatedly use simultaneous normalization in the following way: Suppose we have a family $\mathcal C \to \Delta$ of reduced irreducible curves of geometric genus $g$. Then the normalization $\mathcal C^\nu$ of $\mathcal C$ gives a family $\mathcal C^\nu \to \Delta$ of smooth curves of genus $g$ \cite{teissier80:_resol_i}. For the ease of reading, we do not make this process explicit every time.

The first statement follows from \autoref{thm:constraintsE} and \autoref{thm:ohbuchi}. 
  
For the second statement, first note that if $[\overline C] \in \V_g(\F_m,d\tau)$ is a general point and $\nu \from C \to \overline C$ the normalization, then $C \to \P^1$ is simply branched and has Tschirnhausen bundle $E[m]$. Indeed, we can get a $[\overline C] \in \V_g(\F_m,d\tau)$ with Tschirnhausen module $E[m]$ by applying the associated scroll construction to a cover given by \autoref{thm:coppens}. By \autoref{thm:severi_simply_branched}, we may deform such $[\overline C]$ so that the normalization is simply branched. By the genericity of $E[m]$, the normalization of the deformed curve also has Tschirnhausen bundle $E[m]$. Now, suppose $\lfloor E \rfloor \leq m$ and $[C \to \P^1]$ is a point with Tschirnhausen bundle $E$. Then the associated scroll construction gives $\nu \from C \to \F_m$. Since $\alpha$ is simply branched, $\nu$ is birational onto its image. Then $\nu(C)$ is the limit of curves in $\V_g(\F_m,d\tau)$ whose normalization has Tschirnhausen bundle $E[m]$. The second statement follows.

The third statement is a corollary of the second statement.
 
For the fourth statement, suppose $m = \lfloor \frac{g+d-1}{d-1} \rfloor$. Then $E[m]$ is balanced, so $M(E[m]) = \H_{d,g}^\dagger$.  Suppose $m < \lfloor \frac{g+d-1}{d-1} \rfloor$. Let $U \subset \V_g(\F_m,d\tau)$ be the locus of nodal curves of geometric genus $g$ whose normalization is simply branched over $\P^1$. Then $U$ is a smooth open subset of $\V_g(\F_m,d\tau)$. Normalization of the universal family of curves in $\F_m$ of geometric genus $g$ gives a family of smooth curves of genus $g$ with a simply branched map of degree $d$ to $\P^1$ (induced from $\F_m \to \P^1$.) By definition, the image is in $M(E[m])$. We thus get a dominant map 
\[ q \from U \to M(E[m]).\]
The fiber of $q$ over $[\alpha \from C \to \P^1]$ corresponds to the global sections of $O_C(m)$ that project non-trivially onto $E^\vee(m)$. For general $\alpha \in M(E[m])$, we have $E_\alpha = E[m]$. Also, since $m < \lfloor \frac{g+d-1}{d-1} \rfloor$, the bundle $E[m]$ has a unique $O(m)$ summand and all other summands have degree greater than $m$. Therefore, the general fiber of $q$ has dimension $m+2$. From the dimension of $\V_g(\F_m,d\tau)$, we get
\[ \dim M(E[m]) = \dim \V_g(\F_m,d\tau) - (m+2) = (d-1)m + g + 2d - 3.\]
Since $\dim \H^\dagger_{d,g} = 2g+2d-2$, the fourth statement follows.
 
For the last statement, note that all the arguments hold for $\td M(E)$ if $d$ is prime, since the associated scroll construction $\nu \from C \to \F_{m}$ is automatically birational onto its image.
\end{proof}

\autoref{thm:maroniloci} gives us good control on the dimensions of the Maroni loci for $E$ based on the minimal summand of $E$. We must now consider those $E$ which are non-generic, but nonetheless have the same minimal summand as the generic Tschirnhausen bundle. Set $k = \lfloor \frac{g+d-1}{d-1} \rfloor$. Then 
\[E[k] = O(k)^{\oplus r} \oplus O(k+1)^{\oplus d-r-1},\] 
where $0 < r \leq d-1$. A general cover $\alpha \in \H^\dagger_{d,g}$ has $E[k]$ as its Tschirnhausen bundle. Let $E'$ be any tame bundle, and set $s := h^{0}({E'}^\vee(k))$.  Upper semicontinuity implies $s \geq r$. Suppose $s > r$. Define
\[M^\circ(E') =\left\{ \alpha \in \H^\dagger_{d,g} \mid E_{\alpha} \cong E' \right\}.\]
Then $M^\circ(E')$ is locally closed, and $\overline {M^\circ(E')} = M(E')$.
\begin{lemma}\label{maroni:codim}
  Under the assumptions above, let $Z \subset M^\circ(E')$ be any irreducible component. Then the codimension of $\overline Z$ in $\H^\dagger_{d,g}$ is at least $(s-r)+1$.
\end{lemma}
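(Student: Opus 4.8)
The plan is to run the associated scroll construction at level $m = k$ and compare dimensions against the Severi variety $\V_g(\F_k, d\tau)$, exactly as in the proof of \autoref{thm:maroniloci}(4). The difference is that here the minimal summand is the generic one ($\lfloor E' \rfloor = k$), so the construction lands us in the \emph{whole} Severi variety rather than a proper Maroni locus; the crucial extra $+1$ in the codimension will be extracted from the irreducibility of $\V_g(\F_k, d\tau)$ guaranteed by \autoref{tyomkin:irred}, which forces the locus of covers with the special bundle $E'$ to drop dimension strictly.

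First I would fix an irreducible component $Z \subset M^\circ(E')$. Every cover $[\alpha \from C \to \P^1]$ parametrized by $Z$ is simply branched, hence does not factor, and has $E_\alpha \cong E'$ with $\lfloor E' \rfloor = k$; since $s = h^0({E'}^\vee(k)) > r \geq 1$, the scroll construction at level $k$ applies and is birational onto its image. Carrying it out in a family over $Z$ (as set up before \autoref{tyomkin:irred}, with the canonical induced map to the coarse scheme $\V_g(\F_k, d\tau)$), the relevant sections $\zeta$ form an open subset $\mathcal U_Z$ of the total space of $\rho_* O_{\mathcal C}(k)$. Twisting the defining sequence of $E_\alpha$ by $O(k)$ and using $H^1(O(k)) = 0$ gives $h^0(O_C(k)) = (k+1) + s$, which is constant along $Z$ because $E_\alpha \cong E'$ there. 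Hence $\mathcal U_Z$ is irreducible of dimension $\dim Z + (k+1) + s$.

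Next, the scroll morphism $\mathcal U_Z \to \V_g(\F_k, d\tau)$ is injective: from the image curve $\overline C = \nu_\zeta(C)$ one recovers $\alpha$ by normalizing and projecting to $\P^1$, and one recovers $\zeta$ as the fiber coordinate of the embedding $C \hookrightarrow \F_k \setminus \sigma$, the total space of $O(k)$. Therefore its image $W_Z$ has dimension $\dim Z + (k+1) + s$. Now $W_Z$ is contained in the locus of curves whose normalization has Tschirnhausen bundle $\cong E'$. The general point of $\V_g(\F_k, d\tau)$ has Tschirnhausen bundle $E[k]$ (this is the content of the proof of \autoref{thm:maroniloci}(2) at $m = k$), and $E' \not\cong E[k]$ precisely because $s > r$, so $W_Z$ misses the generic point of $\V_g(\F_k, d\tau)$. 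By \autoref{tyomkin:irred} this variety is irreducible of dimension $dk + 2d + g - 1$, whence $\dim W_Z \leq dk + 2d + g - 2$. Combining this with $\dim W_Z = \dim Z + (k+1) + s$, the identity $(d-1)k = g + r$ coming from $\deg E' = g + d - 1$, and $\dim \H^\dagger_{d,g} = 2g + 2d - 2$, one computes
\[ \codim_{\H^\dagger_{d,g}} \overline Z \;\geq\; (2g+2d-2) - \big((d-1)k + 2d + g - 3 - s\big) \;=\; g + 1 + s - (d-1)k \;=\; (s-r) + 1. \]

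The hard part is the strict dimension drop that produces the $+1$: everything else is the same fiber-dimension bookkeeping as in \autoref{thm:maroniloci}. What makes this step work is the combination of Tyomkin's irreducibility of the Severi variety with the identification of the \emph{generic} Tschirnhausen bundle of $\V_g(\F_k, d\tau)$ as $E[k]$; together these guarantee that $W_Z$ is a \emph{proper} subvariety of an irreducible variety, so its dimension drops by at least one. The injectivity of the scroll map and the constancy of $h^0(O_C(k))$ along $Z$ are routine, but must be handled carefully in a family (or on the fine moduli stack) for the dimension count to be rigorous.
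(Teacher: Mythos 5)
Your proof is correct and is essentially the paper's own argument: the paper likewise runs the associated scroll construction over $Z$, obtaining an open subset of a rank $s+k+1$ bundle mapping to $\V_g(\F_k, d\tau)$, and extracts the extra $+1$ exactly as you do, from Tyomkin's irreducibility together with the fact that $E' \neq E[k]$ forces the image closure to be a proper subvariety, so $\dim Z + s + k + 1 \leq dk+2d+g-2$. Your only slip is the incidental claim that $\lfloor E' \rfloor = k$, which is not among the hypotheses (only $s > r$ is assumed, and $s > 0$ already suffices to run the construction at level $k$); since your argument never actually uses it, nothing is affected, and your explicit treatment of injectivity of the scroll map and of the constancy of $h^0(O_C(k))$ along $Z$ merely fills in details the paper leaves implicit.
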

\begin{proof}
  Let $z = \dim Z$. We use the associated scroll construction over $Z$. We have an open subset $U$ of a vector bundle of rank $s+k+1$ over $Z$ and a morphism $U \to \V_g(\F_k, d\tau)$. Since $E' \neq E[k]$, the closure of the image of $U$ is a proper subvariety of $\V_g(\F_k,\tau)$. In particular, we have $\dim U < \dim \V_g(\F_k, d\tau) = dk+2d+g-1$. The lemma follows from this inequality.
\end{proof}

We now have the tools to determine all the Maroni divisors.
\begin{proposition}\label{maronidivisor} 
  The Maroni locus $M(E) \subset \H_{d,g}$ is a divisor if and only if $g=(k-1)(d-1)$ for some integer $k\geq1$, and $E= E[k-1] = O(k-1) \oplus O(k)^{\oplus d-3} \oplus O(k+1)$.  Furthermore, in this situation, $M(E[k-1])$ is irreducible.
\end{proposition}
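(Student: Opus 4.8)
The plan is to run through all tame bundles $E$ --- by \autoref{thm:maroniloci}(1) these are the only ones for which $M(E)$ is nonempty --- and to decide in each case whether $\codim M(E) = 1$. Set $k = \lfloor (g+d-1)/(d-1)\rfloor$, so that every tame $E$ has $\lfloor E\rfloor = m$ for some $m$ with $\frac{g+d-1}{\binom d2}\le m\le k$, and note that the definition of $k$ forces $g\ge (k-1)(d-1)$. Since codimensions agree in the framed and unframed versions, I would carry out the dimension count in $\H_{d,g}^\dagger$, where \autoref{thm:maroniloci} applies. The three cases to separate are: (a) $E = E[m]$ generic with $m<k$; (b) $E\ne E[m]$ but $\lfloor E\rfloor = m<k$; and (c) $\lfloor E\rfloor = k$.

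For case (a), \autoref{thm:maroniloci}(4) gives $\codim M(E[m]) = g-(d-1)m+1$, which decreases as $m$ grows, so its smallest value over $m\le k-1$ is attained at $m=k-1$ and equals $g-(d-1)(k-1)+1\ge 1$, with equality precisely when $g=(k-1)(d-1)$. Thus among generic bundles the divisorial ones are exactly $E[k-1]$ with $g=(k-1)(d-1)$. For case (b), I would use the strict containment $M(E)\subsetneq M(E[m])$ --- proper because the generic point of the irreducible locus $M(E[m])$ carries the bundle $E[m]\ne E$ --- so that $\codim M(E)\ge \codim M(E[m])+1\ge 2$, using $\codim M(E[m])\ge 1$ for $m\le k-1$. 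Hence no new divisors arise here.

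Case (c) is the crux, because there the containment $M(E)\subset M(E[k])=\H_{d,g}^\dagger$ is vacuous and gives no codimension bound. Here I would invoke \autoref{maroni:codim}, which yields $\codim M(E)\ge (s-r)+1$ for $s = h^0(E^\vee(k))$, once I establish $s>r$ for every tame $E\ne E[k]$ with $\lfloor E\rfloor = k$. Writing $E=\bigoplus O(a_i)$ with all $a_i\ge k$, one has $s = \#\{i: a_i=k\}$, and $s\ge r$ by semicontinuity; moreover $s=r$ would force the remaining $d-1-r$ summands (each $\ge k+1$ and summing to $(g+d-1)-rk=(d-1-r)(k+1)$) all to equal $k+1$, i.e.\ $E=E[k]$. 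Therefore $E\ne E[k]$ gives $s>r$ and $\codim M(E)\ge 2$. This step --- upgrading the useless containment bound to $\codim\ge 2$ through the $h^0$ comparison --- is the main obstacle, and the one place where \autoref{maroni:codim} is indispensable.

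It remains to settle the ``if'' direction and the irreducibility, which are then immediate. When $g=(k-1)(d-1)$ the Tschirnhausen degree is $k(d-1)$, and the most balanced tame bundle with minimal summand $k-1$ is $E[k-1]=O(k-1)\oplus O(k)^{\oplus d-3}\oplus O(k+1)$; a short check using $g\ge 3$ confirms both that $m=k-1$ lies in the admissible range $\frac{g+d-1}{\binom d2}\le m\le \frac{g+d-1}{d-1}$ and that this bundle is tame. \autoref{thm:maroniloci}(4) with $m=k-1$ then yields $\codim M(E[k-1]) = 1$ and simultaneously its irreducibility, completing the argument.
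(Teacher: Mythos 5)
Your proposal is correct and follows essentially the same route as the paper's proof, which also splits into the cases $\lfloor E\rfloor = k$ (handled by \autoref{maroni:codim}) and $\lfloor E\rfloor < k$ (handled by statement (4) of \autoref{thm:maroniloci}). The only difference is that you make explicit the details the paper leaves implicit --- the strict-containment argument for non-generic $E$ with $\lfloor E\rfloor = m < k$, the verification that $s=r$ forces $E = E[k]$, and the tameness check for $E[k-1]$ --- all of which are carried out correctly.
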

\begin{proof}
  If $\lfloor E \rfloor = k = \lfloor \frac{g+d-1}{d-1} \rfloor$, then the statement follows by applying \autoref{maroni:codim}.  If, on the other hand, $\lfloor E \rfloor <   \lfloor \frac{g+d-1}{d-1} \rfloor$, then the statement follows from statement $4$ of \autoref{thm:maroniloci}.
\end{proof}

We record a particularly interesting case of the irreducibility of the Maroni divisor.
\begin{corollary}\label{giesekerpetri}
  Let $g=2(d-1)$. Then $M(E[2]) \subset \H_{d,g}$ is irreducible, and it is the ramification locus of the generically finite and dominant forgetful map $\mu \from \H_{d,g} \to \mathcal{M}_g$. 
\end{corollary}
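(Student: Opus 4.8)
The plan is to read the irreducibility off \autoref{maronidivisor} and to spend the real effort identifying $M(E[2])$ with the ramification locus of $\mu$ by a tangent-space computation. Since $g=2(d-1)$ we have $\frac{g+d-1}{d-1}=3$, so \autoref{maronidivisor} applied with $k=3$ already tells us that $M(E[2])$ is an irreducible divisor, with $E[2]=O(2)\oplus O(3)^{\oplus d-3}\oplus O(4)$, whereas the general cover carries the balanced bundle $E[3]=O(3)^{\oplus d-1}$. I would also record the numerical coincidence $\dim\H_{d,g}=2g+2d-5=3g-3=\dim\M_g$, which holds exactly when $g=2(d-1)$ and is what makes it plausible that $\mu$ is generically finite; the substance is then the analysis of $d\mu$.

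For a cover $[\alpha\from C\to\P^1]$ write $L=\alpha^*O_{\P^1}(1)$, so $\alpha^*T_{\P^1}=L^{\otimes 2}$. The differential $d\alpha\from T_C\to L^{\otimes 2}$ is a section of $\omega_\alpha=K_C\otimes\alpha^*T_{\P^1}$ vanishing on the (reduced, since $\alpha$ is simply branched) ramification divisor; it is injective with torsion cokernel $T_\alpha:=\coker(d\alpha)$. The deformation theory of branched covers identifies $T_{[\alpha]}\H^\dagger_{d,g}$ with $H^0(C,T_\alpha)$ and the differential of the forgetful map with the connecting homomorphism $\delta$ of
\[ 0\to T_C\to L^{\otimes 2}\to T_\alpha\to 0. \]
Since $g\ge 2$ gives $H^0(C,T_C)=0$ and $T_\alpha$ torsion gives $H^1(C,T_\alpha)=0$, the long exact sequence yields $\ker\delta\cong H^0(C,L^{\otimes 2})$ and $\coker\delta\cong H^1(C,L^{\otimes 2})$. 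The three-dimensional subspace $\alpha^*H^0(\P^1,T_{\P^1})\subset H^0(C,L^{\otimes 2})$ is the tangent to the $\PGL_2$-reparametrization of the target and lies in $\ker\delta$; descending from $\H^\dagger_{d,g}$ to $\H_{d,g}$ along the $\PGL_2$-quotient, I would obtain $\ker(d\mu)\cong H^0(C,L^{\otimes 2})/\alpha^*H^0(\P^1,O_{\P^1}(2))$ and $\coker(d\mu)\cong H^1(C,L^{\otimes 2})$, each of dimension $h^1(C,L^{\otimes 2})$.

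To connect this to the Maroni invariant, twist the defining sequence \eqref{structure sheaf sequence} of the Tschirnhausen bundle by $O_{\P^1}(2)$ and take cohomology on $\P^1$; this gives a short exact sequence $0\to H^0(O_{\P^1}(2))\to H^0(C,L^{\otimes 2})\to H^0(\P^1,E_\alpha^\vee(2))\to 0$, so $\dim\ker(d\mu)=h^0(E_\alpha^\vee(2))$. Because $\rk E_\alpha=d-1$ and $\deg E_\alpha=3(d-1)$, this number vanishes if and only if every summand of $E_\alpha$ has degree at least $3$, i.e. if and only if $E_\alpha=E[3]$ is balanced; otherwise $[\alpha]\in M(E[2])$. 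Hence $d\mu$ is an isomorphism exactly off $M(E[2])$, and the ramification locus of $\mu$ is $M(E[2])$. Finally, a general cover has balanced $E_\alpha=E[3]$ by \autoref{thm:maroniloci}, so $d\mu$ is an isomorphism there; combined with $\dim\H_{d,g}=\dim\M_g$ this shows $\mu$ is dominant and generically finite.

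I expect the main obstacle to be the deformation-theoretic input: identifying $T_{[\alpha]}\H^\dagger_{d,g}=H^0(C,T_\alpha)$, pinning down $d\mu$ as the connecting map $\delta$, and performing the descent through the $\PGL_2$-action that relates the framed and unframed spaces. Once that bookkeeping is secure the remainder is Riemann--Roch on $C$ together with the Tschirnhausen sequence. As a sanity check and to explain the name, the base-point-free pencil trick applied to the pencil $\alpha^*H^0(O_{\P^1}(1))\subset H^0(L)$ identifies $\ker(d\mu)\cong H^0(C,K_C\otimes L^{\otimes -2})$ with the kernel of the Petri map $H^0(L)\otimes H^0(K_C\otimes L^{\otimes -1})\to H^0(K_C)$, so that $M(E[2])$ is precisely the locus where Petri injectivity fails, i.e. the Gieseker--Petri locus.
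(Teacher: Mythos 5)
Your proof is correct and follows essentially the same route as the paper: irreducibility is quoted from \autoref{maronidivisor}/\autoref{thm:maroniloci}, and the ramification locus is identified via the normal-sheaf sequence $0 \to T_C \to \alpha^*T_{\P^1} \to N_\alpha \to 0$, with $d\mu$ failing to be an isomorphism exactly when $H^1(C, \alpha^*T_{\P^1}) \neq 0$, i.e.\ when $E_\alpha$ is unbalanced. Your version merely fleshes out what the paper leaves implicit --- the kernel computation through the twisted Tschirnhausen sequence, the dimension count making $\mu$ generically finite, and the Gieseker--Petri interpretation --- and also silently corrects the paper's typo in the order of the terms of the displayed exact sequence.
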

\begin{proof}
The irreducibility statement follows from \autoref{thm:maroniloci}. To show that $M(E[2])$ is the ramification locus of $\mu$, consider $[\alpha \from C \to \P^1] \in \H_{d,g}$ and the map of sheaves: 
\[ 0 \to \alpha^{*}(T_{\P^1}) \to T_C \to N_{\alpha}\to 0.\]
The tangent space to $\H_{d,g}$ at $\alpha$ is $H^0(C, N_\alpha)/\alpha^*H^0(\P^1, T_{\P^1})$ and the tangent space to $\M_g$ at $C$ is $H^1(C, T_C)$. The map
 \[d\mu \from H^{0}(C, N_{\alpha})/\alpha^{*}H^{0}(\P^1, T_{\P^1}) \to H^{1}(C, T_C)\]
fails to be surjective precisely when $H^1(C, \alpha^*T_{\P^1}) \neq 0$, that is, when $\alpha \in M(E[2])$.
\end{proof}

\subsection{Linear independence of $T$, $D$, and $\Delta$}
In this section, we prove that the divisorial components of the boundary of $\td{\mathcal H}_{d,g}$ are linearly independent. Define the closed loci $T$, $D$, $\Delta$ in $\td{\mathcal H}_{d,g}$ by
\begin{align*}
  T &= \overline{\{ [\alpha \from C \to \P^1] \mid \alpha^{-1}(q) = 3p_1 + p_2 + \dots + p_{d-2}\text{ for some $q$ and distinct $p_i$.}\}}\\
  D &= \overline{\{ [\alpha \from C \to \P^1] \mid \alpha^{-1}(q) = 2p_1 + 2p_2 + p_3 + \dots + p_{d-2} \text{ for some $q$ and distinct $p_i$.}}\} \\
  \Delta &= \overline{\{ [\alpha \from C \to \P^1] \mid C \text{ is singular.}}\}
\end{align*}
These three loci correspond to the three possibilities of the limit when two branch points of a branched cover come together. Note that $T$, $D$, and $\Delta$ are irreducible and their union is the complement of $\td \H_{d,g}$ in $\H_{d,g}$.
\begin{proposition}\label{thm:lin_indep_boundary}
  For $d \geq 4$, the classes of $T$, $D$, and $\Delta$ are linearly independent in $\Pic_\Q(\td \H_{d,g})$. For $d \geq 3$, the same is true for the classes of $T$ and $\Delta$.
\end{proposition}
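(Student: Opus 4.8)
The plan is to prove linear independence by the method of test curves: I will produce complete curves $B_1, B_2, B_3 \subset \td\H_{d,g}$ and show that the $3 \times 3$ matrix of intersection numbers $\big(B_i \cdot T,\ B_i \cdot D,\ B_i \cdot \Delta\big)$ is nonsingular. Since a relation $aT + bD + c\Delta = 0$ in $\Pic_\Q$ would force $(a,b,c)$ to lie in the kernel of this matrix, nonsingularity immediately yields $a = b = c = 0$. For the weaker $d \geq 3$ statement only $T$ and $\Delta$ occur---there is no element of cycle type $(2,2)$ in $S_3$, so $D$ is empty when $d=3$---and it suffices to exhibit two test curves giving a nonsingular $2 \times 2$ matrix.

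To manufacture complete test curves together with a workable description of their intersections with the boundary, I will use the associated scroll construction of \autoref{sec:ass_scroll}. Fix $m$ large enough that the scroll/normalization rational map $\phi_m \from \overline{\V_g(\F_m, d\tau)} \to \td\H_{d,g}$ is dominant (such $m$ exist by the range in \autoref{thm:B}). The closure $\overline{\V_g(\F_m, d\tau)}$ is projective and irreducible by \autoref{tyomkin:irred}, so intersecting it with a general linear subspace of $|d\tau|$ of complementary dimension produces a complete irreducible curve whose image $B_m \subset \td\H_{d,g}$ is a test curve. Under $\phi_m$ the boundary divisors pull back to explicit geometric loci on the Severi variety: a point of $T$ corresponds to a member $\overline C \subset \F_m$ whose projection to $\P^1$ has a triple ramification, i.e.\ a fiber of $\pi \from \F_m \to \P^1$ meeting $\overline C$ with contact order three; a point of $D$ corresponds to a fiber tangent to $\overline C$ at two distinct points; and a point of $\Delta$ corresponds to the source curve acquiring a node, which is exactly the locus $\V_{g-1}(\F_m, d\tau) \subset \overline{\V_g(\F_m, d\tau)}$ where an extra node appears. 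Each of these is a divisor on $\overline{\V_g}$, so $B_m \cdot T$, $B_m \cdot D$, and $B_m \cdot \Delta$ equal the degrees, with respect to the $|d\tau|$-polarization, of the flex-fiber locus, the bitangent-fiber locus, and $\V_{g-1}$ respectively. These degrees are computable by standard contact and node-counting enumerative geometry, and each is an explicit function $t(m)$, $e(m)$, $\delta(m)$ of the parameter $m$.

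With these formulas in hand, I will obtain enough test curves simply by varying $m$: the triples $\big(t(m), e(m), \delta(m)\big)$ for three different values of $m$, or the pairs $\big(t(m), \delta(m)\big)$ for two values when $d = 3$, furnish the rows of the intersection matrix. Linear independence then reduces to the statement that no nonzero combination $a\,t(m) + b\,e(m) + c\,\delta(m)$ vanishes identically in $m$, which follows once the three counts are seen to be distinct low-degree polynomials in $m$ with independent leading behavior. I expect the main obstacle to be the enumerative bookkeeping: computing the contact-order-three and bitangency degrees with their correct multiplicities and---more delicate---pinning down the multiplicity with which $B_m$ meets $\Delta$, equivalently the scheme structure of $\V_{g-1}$ inside $\overline{\V_g}$ along the image of $B_m$, since a miscounted multiplicity there is precisely what could collapse the matrix. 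Verifying that the node-degeneration count $\delta(m)$ grows at a different rate in $m$ than the ramification counts $t(m)$ and $e(m)$ is the crux that guarantees nonsingularity.
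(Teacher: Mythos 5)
Your overall strategy---complete test curves and a nonsingular intersection matrix---is exactly the paper's, but your source of test curves leaves the crux of the argument unproved. The entries $t(m)$, $e(m)$, $\delta(m)$ are degrees (with respect to the $|d\tau|$-polarization) of the flex-fiber, bitangent-fiber and extra-node loci inside a Severi variety whose members carry $\delta = {d \choose 2}m - (d-1) - g$ nodes, a number \emph{growing linearly with $m$}. Such degrees are not ``standard contact and node-counting bookkeeping'': even the degree of $\V_g(\F_m, d\tau)$ itself is accessible only through Caporaso--Harris/Vakil-type recursions, with no closed form in $m$, and node-polynomial results apply only for a fixed number of nodes. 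So your asserted polynomiality in $m$ with ``independent leading behavior'' is unsubstantiated. Worse, the method carries a built-in degeneracy risk that varying $m$ cannot escape: all your rows lie on the single curve $m \mapsto \bigl(t(m), e(m), \delta(m)\bigr)$ in $\Q^3$, so if these three functions satisfy one linear relation identically in $m$ (equivalently, if that curve lies in a plane through the origin), then \emph{every} choice of three values of $m$ yields a singular matrix. Ruling that out is precisely the content of the proposition, and nothing in the proposal does so; your own closing sentence concedes that the determinant has not been computed.

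There are also genuine gaps in the setup. The normalization map $\overline{\V_g(\F_m,d\tau)} \dashrightarrow \td\H_{d,g}$ is only rational: along $\V_{g-1}$ the Severi variety has several local sheets, one for each choice of which nodes persist, and simultaneous normalization fails, so intersection numbers must be computed on the normalization $\td\U$ of the Diaz--Harris partial compactification (as the paper does in its final section), after checking that your slice curves avoid the remaining indeterminacy. Your boundary dictionary is also off: a general point of the cusp divisor $CU$ normalizes to a \emph{smooth} genus $g$ curve whose projection has an ordinary simple ramification point, so $CU$ maps generically into the interior $\H_{d,g}$, not into $T$ or $\Delta$; and the multiplicities with which $T$, $D$, $\Delta$ pull back along the flex, bitangent, and extra-node loci are asserted rather than computed. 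Contrast the paper's route: it manufactures test curves by gluing a moving family of low-degree covers (hyperelliptic or trigonal pencils) to a fixed cover along sections over $0$ and $\infty$, computes every intersection number elementarily---including the multiplicities $3$ at $T$ and $2$ at $D$ via explicit versal deformations $y^3 - xy - sx - t$ and $z^2 - x^2 - sx - t$---and then exhibits an explicit nonsingular $3\times 3$ matrix. That hands-on local computation is exactly what your Severi-variety route replaces with intractable enumerative unknowns.
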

\begin{proof}
  We construct curves with non-singular intersection matrix with our divisors. For this, a slight enlargement of $\td \H_{d,g}$ is more convenient. Define $\td \H^{ns}_{d,g}$ as the moduli space of $[\alpha \from C \to \P^1]$ where $C$ is an at worst nodal curve of arithmetic genus $g$, not necessarily irreducible, but without any separating nodes, and $\alpha$ is a map of degree $d$. The target $\P^1$ is taken to be unframed. It is easy to see that $\td \H_{d,g}$ is a dense open subset of $\td \H_{d,g}^{ns}$ with codimension two complement. Abusing notation, we denote the closures of $T$, $D$, and $\Delta$ in $\td \H_{d,g}^{ns}$ by the same letters. It suffices to prove the proposition for $\td \H_{d,g}^{ns}$.

  We now construct test curves in $\td \H_{d,g}^{ns}$. Pick non-negative integers $g_1$ and $g_2$ with $g_1+g_2 = g-1$ and positive integers $d_1$ and $d_2$ with $d_1+d_2 = d$. Take a family $\alpha_b \from X_b \to \P^1$ of covers of degree $d_1$ and genus $g_1$, where $b$ denotes a parameter on a smooth complete curve $B$. Assume that we have two sections $p, q \from B \to X$ with $\alpha_b(p_b) = 0$ and $\alpha_b(q_b) = \infty$ for all $b \in B$.  Take $\beta \from E \to \P^1$ to be a fixed simply branched cover of degree $d_2$ and genus $g_2$, unramified over $0$ and $\infty$, and let $p', q' \in E$ be two points over $0$ and $\infty$ respectively. Our test curve in $\td \H_{d,g}^{ns}$ is given by the family $\gamma_b \from C_b \to \P^1$, where $C_b$ is obtained by gluing $(X_b, p_b, q_b)$ to the constant family $(E, p',q')$, and $\gamma_b \from C_b \to \P^1$ is induced from $\alpha \from X_b \to \P^1$ and $\beta \from E \to \P^1$. The construction is depicted in \autoref{fig:cover_attach}. 
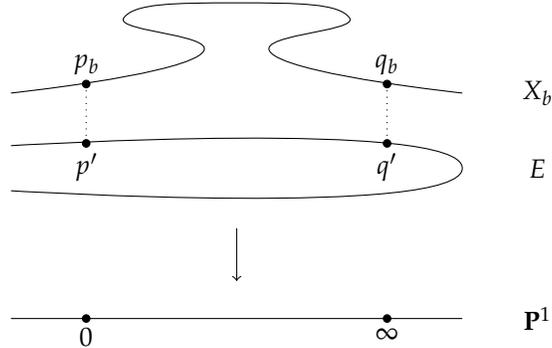
\begin{figure}[ht]
  \centering
  \begin{tikzpicture}
  \draw[smooth, tension=1] plot coordinates {(-3,0) (-.5, .5)
    (-1.5,1) (0,1.2) (1.5,1) (.5,0.5) (3,0)};

  \draw[fill] (-2,.12) circle (.05) node [above] {$p_b$};
  \draw[fill] (2,.12) circle (.05) node [above] {$q_b$};
  \draw (4,0) node {$X_b$};

  \begin{scope}[shift={(0,-1)}]
    \draw[smooth, tension=4] plot coordinates {(-3, 0.3) (3,0) (-3,-0.3)};
    \draw[fill] (-2, .33) circle (0.05) node [below] {$p'$};
    \draw[fill] (2, .33) circle (0.05) node [below] {$q'$};
  \draw (4,0) node {$E$};
  \end{scope}

  \draw[dotted] (-2,.12) edge (-2,-.67);
  \draw[dotted] (2,.12) edge (2,-.67);

  \begin{scope}[shift={(0,-3)}]
    \draw (-3,0) edge (3,0) (4,0) node {$\P^1$};
    \draw[fill] (2,0) circle (0.05) node[below] {$\infty$};
    \draw[fill] (-2,0) circle (0.05) node[below] {$0$};
  \end{scope}
  
  \path[->] (0,-1.8) edge (0,-2.5);
\end{tikzpicture}

\caption{We construct families of covers parametrized by $b \in B$ by
  attaching a variable family of covers $\alpha_b \from X_b \to \P^1$
  to a fixed cover $\beta \from E \to \P^1$.}
\label{fig:cover_attach}
\end{figure}

  Let $T_\alpha$, $D_\alpha$, and $\Delta_\alpha$ denote the pullbacks of the divisor classes $T$, $D$, and $\Delta$ along the map from $B$ to $\td \H_{d_1,g_1}$ given by $\alpha_b$. Define $T_\gamma$, $D_\gamma$, and $\Delta_\gamma$ likewise. Let $e$ be the intersection number of $\Br(\alpha)$ with a horizontal section of $\P^1 \times B$. Denote by $[p]$ (resp. $[q]$) the class of  $p(B)$ (resp. $q(B)$) on $X$.
  \begin{claim}
    With the notation above, we have
    \begin{align*}
      \deg T_\gamma &= \deg T_\alpha + 3([p]+[q]) \cdot \Ram(\alpha),\\
      \deg D_\gamma &= \deg D_\alpha + (2g_2+2d_2-2)e + 4e - 4([p]+[q])\cdot\Ram(\alpha), \text{ and }\\
      \deg \Delta_\gamma &= \deg \Delta_\alpha + [p]^2 + [q]^2.
    \end{align*}
  \end{claim}
  \begin{proof}[Proof of the claim]
    The pullback of the line bundle $O(\Delta)$ from $\td \H_{d,g}^{ns}$ to $B$ is given by
    \[(N_{p/X} \otimes N_{p'/E}) \otimes (N_{q/E} \otimes N_{q'/E}) \otimes O_B(\Delta_\alpha),\]
    where $N_{p/X}$ denotes the normal bundle of $p$ in $X$, and so on. The third equation follows.

    For a generic $b \in B$, the point of $\td \H_{d,g}^{ns}$ given by $\gamma_b \from C_b \to \P^1$ does not lie in $T$ or $D$. We have the following specializations:
    \begin{enumerate}
    \item $\alpha_b \from X_b \to \P^1$ has a fiber of the form $3p_1 + p_2 + \dots$. Such $b$'s are precisely the points of $T_\alpha$, each contributing $1$ to $\deg T_\gamma$. 
    \item $\alpha_b \from X_b \to \P^1$ has a fiber of the form $2p_1 + 2p_2 + p_3 + \dots$. Such $b$'s are precisely the points of $D_\alpha$, each contributing $1$ to $\deg D_\gamma$
    \item A branch point of $\alpha_b \from X_b \to \P^1$ coincides with a branch point of $\beta \from E \to \P^1$. There are $(2g_2+2d_2-2)e$ such $b$'s, each contributing $1$ to $\deg D_\gamma$.
    \item $p_b$ (resp. $q_b$) is a ramification point of $\alpha_b$. We compute the intersection multiplicity of $B$ with $T$ and $D$ at such a point by looking at a versal deformation space of $\gamma_b$. We may restrict $\gamma_b$ over an analytic neighborhood $U$ of $0$ (resp. $\infty$). Let $x$ be a coordinate on $U$. Then $\gamma_b^{-1}(U) \to U$ has the form
      \[U[y]/(y^3-xy) \sqcup U \sqcup \dots \sqcup U \to U.\]
      A versal deformation of this cover is given over $\spec \C[s,t]$ by
      \[U[y]/(y^3-xy-sx-t) \sqcup U \sqcup \dots \sqcup U \to U.\] In
      $\spec \C[s,t]$, the divisor $D$ does not contain the origin,
      and hence the intersection number of $B$ with $D$ at $b$ is
      $0$. The divisor $T \subset \spec \C[s,t]$ is defined by $t =
      0$. The curve $B$ approaches the origin along the locus where
      $U[y]/(y^3-xy-sx-t)$ is singular, namely along $s^3 + t = 0$. We
      deduce that the intersection number of $B$ with $T$ at $b$ is
      $3$. There are $[p] \cdot \Ram(\alpha)$ (resp. $[q] \cdot
      \Ram(\alpha)$) such $b$'s, each contributing 3 to $\deg
      T_\gamma$.

    \item $p_b$ (resp. $q_b$) is not a ramification point of
      $\alpha_b$, but lies over a branch point. Again, we look at a
      versal deformation of $\gamma_b$. In this case,
      $\gamma_b^{-1}(U) \to U$ has the form
        \[ U[y]/(y^2-x) \sqcup U[z]/(z^2-x^2) \sqcup U \sqcup \dots \sqcup U \to U.\]
      A versal deformation of this cover is given over $\spec \C[s,t]$ by
      \[ U[y]/(y^2-x) \sqcup U[z]/(z^2-x^2-sx-t) \sqcup U \sqcup \dots
      \sqcup U \to U.\] In $\spec \C[s,t]$, the divisor $T$ does not
      contain the origin, and hence the intersection number of $B$
      with $T$ at $b$ is $0$. The divisor $D \subset \spec \C[s,t]$ is
      defined by $t = 0$. The curve $B$ approaches the origin along
      the locus where $U[z]/(z^2-x^2-sx-t)$ is singular, namely along
      $s^2-4t = 0$. We deduce that the intersection number of $B$ with
      $D$ at $b$ is $2$. Let us count the number of such points, first
      for $p_b$, and analogously for $q_b$. The points $b$ for which
      $p_b$ is not a ramification point but lies over a branch point
      correspond to the intersection points of $\Br(\alpha) \cap \{0\}
      \times B$ which are not the images of the points of
      $\Ram(\alpha) \cap p(B)$. Note, however, that the image of a
      point of $\Ram(\alpha) \cap p(B)$ is actually a point of
      tangency of $\Br(\alpha)$ with $\{0\} \times B$, and hence
      contributes $2$ to the intersection number $e = \Br(\alpha)
      \cdot \{0\} \times B$. The remaining count, which we want, is
      therefore $e - 2[p]\cdot \Ram(\alpha)$. Similarly, the count for
      $q_b$ is $e - 2[q] \cdot \Ram(\alpha)$.
     \end{enumerate}
     The expressions for $T_\gamma$ and $D_\gamma$ follow from combining the above contributions.
  \end{proof}
  Returning to the proof of the proposition, consider the following three particular test curves for $d \geq 4$.
  \begin{enumerate}[$B_1$:]
  \item Take $\alpha_b \from X_b \to \P^1$ to be a family of hyperelliptic curves of genus $g-1$ obtained by taking a double cover $X \to \P^1 \times \P^1$ branched along a curve of type $(2g, 2)$. To have sections $p$ and $q$ of $X$ over $\{0\} \times \P^1$ and $\{\infty\} \times \P^1$, let the branch divisor be tangent to $\{0\} \times \P^1$ and $\{\infty\} \times \P^1$. Take $E$ to be a smooth rational curve and $\gamma \from E \to \P^1$ a generic cover of degree $d-2$.
  \item Take $\alpha_b \from X_b \to \P^1$ to be a family of trigonal curves of genus $g-1$ obtained by taking a general pencil on $\F_0$ in the linear system $|((g+1)/2,3)|$ if $g$ is odd, or on $\F_1$ in the linear system $|3\cdot\text{Directrix} + (g/2+2) \cdot \text{Fiber}|$ if $g$ is even. Two base-points give $p_b$ and $q_b$. Take $E$ to be a rational curve and $\gamma \from E \to \P^1$ a general cover of degree $d-3$.
  \item Take $\alpha_b \from X_b \to \P^1$ to be a family of hyperelliptic curves of genus $g-2$ as in $B_1$. Take $E$ to be a smooth genus $1$ curve and $\gamma \from E \to \P^1$ a generic cover of degree $d-2$. This curve exists only for $d \geq 4$. 
  \end{enumerate}
  Using the claim, we get the following non-singular intersection matrix.
  \[
  \begin{tabular}[h]{c| c c c}
    & $T$ & $D$ & $\Delta$\\
    \hline
    $B_1$&$6$ & $4d-12$ &$8g-6$\\
    $B_2$&$3g+9$& $8d-24$& $7g-3$\\
    $B_3$&$6$ &$4d-8$ & $8g-14$\\
  \end{tabular}
  \]
  For $d = 3$, we take a pencil in $\F_0$ or $\F_1$ as in $B_1$, but of trigonal curves of genus $g$, without any $E$. Then the middle column vanishes, and the second row becomes $(3g+6, 0, 7g+6)$, which is linearly independent from the first row.
\end{proof}

\section{Degree three}
Let $C$ be a curve of genus $g$ and $\alpha \from C \to \P^1$ a map of degree three. The relative canonical map embeds $C$ as a divisor in a $\P^1$-bundle $\P E$ over $\P^1$, where $E$ is a vector bundle of rank two and degree $g+2$.

Let
\[ E^\gen = O\left(\left\lfloor \frac{g+2}{2}\right\rfloor \right) \oplus O\left(\left\lceil \frac{g+2}{2} \right\rceil \right)\]
be the most generic vector bundle on $\P^1$ of rank $2$ and degree $g+2$. Set
\[ U_{E^\gen} := \{ \alpha \in \td \H_{3,g} \mid E_\alpha \cong E^\gen \}.\]
Note that $U_{E^\gen}$ is an open subset of $\td \H_{3,g}$.

\begin{proposition}\label{thm:maroni3}
  The complement of $U_{E^\gen}$ in $\td \H_{3,g}$ is a divisor if and only if $g$ is even, in which case it is irreducible.
\end{proposition}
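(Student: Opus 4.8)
The plan is to recognize the complement of $U_{E^\gen}$ as a single Maroni locus and then read off its codimension and irreducibility from the general analysis of \autoref{sec:CE}. The first observation is that a rank-two bundle on $\P^1$ of degree $g+2$ is determined up to isomorphism by its minimal summand, so the generic bundle $E^\gen$ is precisely $E[k]$ with $k = \lfloor (g+2)/2 \rfloor$, and every non-generic Tschirnhausen bundle $E_\alpha$ satisfies $\lfloor E_\alpha \rfloor < k$. Since $d = 3$ is prime, \autoref{thm:maroniloci}(5) lets me work directly with the loci $\td M(-)$ in $\td\H_{3,g}^\dagger$; their $\PGL_2$-invariant images in $\td\H_{3,g}$ (denoted by the same letters, as in \autoref{def:maroni_ce}) have the same codimension and inherit irreducibility, since the fibers of $\td\H_{3,g}^\dagger \to \td\H_{3,g}$ are $\PGL_2$-orbits of constant dimension three.

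The key step is to identify the complement of $U_{E^\gen}$ with $\td M(E[k-1])$. For rank two the specialization order $\leadsto$ on degree-$(g+2)$ bundles is total and is refined by the minimal-summand statistic, so the Tschirnhausen strata are nested. Concretely, any $\alpha$ in the complement has $\lfloor E_\alpha \rfloor = m$ for some $m \le k-1$, and \autoref{thm:maroniloci}(2) and (3) give $\td M(E_\alpha) \subseteq \td M(E[m]) \subseteq \td M(E[k-1])$; conversely, by semicontinuity of the splitting type every point of $\td M(E[k-1])$ has $\lfloor E_\alpha \rfloor \le k-1 < k$, hence lies in the complement. When $k-1$ drops below the lower bound $\lceil (g+2)/3 \rceil$ of \autoref{thm:constraintsE}, there are no such covers and the complement is empty; this happens only for small odd $g$ and causes no trouble.

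It then remains to do the codimension count. By \autoref{thm:maroniloci}(4), whenever $k-1$ lies in the admissible range the locus $\td M(E[k-1])$ is irreducible of codimension $g - 2(k-1) + 1$. If $g$ is even then $k-1 = g/2$, the admissibility requirement $k-1 \ge \lceil (g+2)/3 \rceil$ holds because $g \ge 4$, and the codimension equals $1$: the complement is an irreducible divisor. If $g$ is odd then $k-1 = (g-1)/2$ and the codimension is $2$ (or the locus is empty), so the complement is not a divisor. This is exactly the claimed dichotomy, and it matches the divisor criterion of \autoref{maronidivisor} specialized to $d = 3$, where the condition $g = (k-1)(d-1)$ becomes the parity condition that $g$ be even.

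I do not expect a serious obstacle: the degree-three statement is essentially a corollary of the general Maroni machinery, the only genuine content being that for rank two the strata are totally ordered, so the complement collapses to the single locus $\td M(E[k-1])$. The points that demand a little care are verifying that the complement carries no stray components outside this locus and checking the admissible range for $m = k-1$, so as to confirm that the even case always lands in codimension one and the odd case never does.
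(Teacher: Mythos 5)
Your proposal is correct and follows essentially the same route as the paper: the paper's proof is the one-line citation ``this is the degree~3 case of \autoref{maronidivisor}'', whose proof in turn rests on exactly the ingredients you use (\autoref{thm:maroniloci}, \autoref{thm:constraintsE}, with statement~(5) handling the passage to $\td M(-)$ in $\td\H_{3,g}$ since $3$ is prime). You simply unwind that citation for rank two, where the observation that a rank-two bundle is determined by its minimal summand collapses the complement to the single nested locus $\td M(E[k-1])$ and renders \autoref{maroni:codim} unnecessary --- a faithful, slightly more explicit version of the paper's argument.
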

\begin{proof}
  This is the degree 3 case of \autoref{maronidivisor}.
\end{proof}

Let $\pi \from \P E^\gen \to \P^1$ be the projection. Set
\[V = H^0(\P^1, \Sym^3 E^\gen \otimes {\det E^\gen}^\vee).\]
Elements of $\P_{\sub}V$ correspond to divisors in the linear series of the line bundle $O_{\P E^\gen}(3) \otimes \pi^* (\det E^\gen)^\vee$ on $\P E^\gen$. Let $C_v \subset \P E^\gen$ be the divisor corresponding to $v \in V$. Let $V^\circ \subset \P_\sub V$ be the open locus consisting of $v \in V^\circ$ for which $C_v$ is irreducible and at worst nodal. Let $G := \Aut(\pi)$ be the group of automorphisms of $\P E^\gen$ over $\P^1$. Then $G$ acts on $V^\circ$. The assignment 
\[v \mapsto [\pi \from C_v \to \P^1] \]
gives a map
\[q \from V^\circ \to \td \H_{3,g}^\dagger.\]
Denote by $U^\dagger_{E^\gen}$ the preimage of $U_{E^\gen}$ under $\td \H^\dagger_{3,g} \to \td \H_{3,g}$.
\begin{proposition}\label{thm:3_quotient}
  The image of $q$ is $U^\dagger_{E^\gen}$. The fibers of $q$ consist of single $G$-orbits.
\end{proposition}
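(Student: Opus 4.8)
The plan is to identify $q$ with the inverse of the relative canonical embedding of \autoref{thm:CE} specialized to $d = 3$, so the backbone is a single pushforward computation that I would carry out first. For $d = 3$ the resolution \eqref{eqn:casnati_resolution} degenerates to
\[ 0 \to \pi^*\det E^\gen(-3) \to O_{\P E^\gen} \to O_{C_v} \to 0, \]
which is exactly the statement that $C_v$ is a divisor in $|O_{\P E^\gen}(3) \otimes \pi^*(\det E^\gen)^\vee|$. Twisting this by $O_{\P E^\gen}(1)$ and pushing forward along $\pi$, I would use $\pi_* O_{\P E^\gen}(-2) = 0$, $R^1\pi_* O_{\P E^\gen}(1) = 0$, and relative duality via $\omega_{\P E^\gen/\P^1} = O_{\P E^\gen}(-2)\otimes \pi^*\det E^\gen$ (which forces $R^1\pi_*\pi^*\det E^\gen(-2) \cong O_{\P^1}$) to recover precisely the defining sequence
\[ 0 \to E^\gen \to (\pi|_{C_v})_* \omega_{\pi|_{C_v}} \to O_{\P^1} \to 0 \]
of \eqref{dual structure sheaf sequence}. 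Along the way adjunction on the smooth surface $\P E^\gen$ gives $\iota^*O_{\P E^\gen}(1) = \omega_{\pi|_{C_v}}$ and fixes $p_a(C_v) = g$ uniformly over the linear system. Hence for every $v \in V^\circ$ the cover $[\pi|_{C_v}\from C_v \to \P^1]$ is an irreducible nodal genus-$g$ degree-$3$ cover whose Tschirnhausen bundle is $E^\gen$, so the image of $q$ is contained in $U^\dagger_{E^\gen}$.

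For the reverse inclusion I would run the construction backwards: given $[\alpha\from C \to \P^1] \in U^\dagger_{E^\gen}$, we have $E_\alpha \cong E^\gen$, and after fixing such an isomorphism \autoref{thm:CE} embeds $C$ as a divisor in the linear system above, producing $v \in \P_\sub V$; since $C$ is irreducible and at worst nodal, $v \in V^\circ$ and $q(v) = [\alpha]$. Combined with the first paragraph, this shows the image of $q$ equals $U^\dagger_{E^\gen}$.

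For the fiber statement, note that $G = \Aut(\pi)$ acts on $V$ through its action on $\P E^\gen$ over $\P^1$, and $q(g\cdot v) = q(v)$ because an automorphism over $\P^1$ sends $\pi|_{C_v}$ to an isomorphic \emph{framed} cover; thus each fiber is a union of $G$-orbits. For the opposite containment, suppose $q(v) = q(v')$, witnessed by an isomorphism $\phi\from C_v \to C_{v'}$ with $\pi|_{C_{v'}} \circ \phi = \pi|_{C_v}$. The first computation shows both $C_v$ and $C_{v'}$ sit in $\P E^\gen$ as the relative canonical embeddings of \autoref{thm:CE}, which are intrinsic to the abstract cover once an identification of the Tschirnhausen bundle with $E^\gen$ is chosen. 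Concretely, $\phi$ induces a canonical isomorphism $\omega_{\pi|_{C_v}} \cong \phi^*\omega_{\pi|_{C_{v'}}}$ commuting with the covers, hence an isomorphism of Tschirnhausen bundles $E^\gen \to E^\gen$; the associated $g \in \Aut(\P E^\gen/\P^1) = G$ carries the embedded $C_v$ to the embedded $C_{v'}$ and restricts to $\phi$, so $v' = g\cdot v$.

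The main obstacle is precisely the canonicity invoked in the last step: I must show that an abstract isomorphism of framed covers extends to an automorphism of the ambient scroll, and does not merely induce an abstract isomorphism of Tschirnhausen bundles. This reduces to the naturality of the evaluation map $\alpha^* E_\alpha \to \omega_\alpha$ that defines the embedding in \autoref{thm:CE}: because the embedding depends only on this intrinsic map, the sole freedom is the choice of isomorphism $E_\alpha \cong E^\gen$, which is exactly the $G$-ambiguity. Care is also needed to confirm that automorphisms of the cover $\alpha$ itself do not enlarge the fiber beyond one $G$-orbit; these are absorbed since each such automorphism acts compatibly through an element of $G$.
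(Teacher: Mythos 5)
Your proof is correct and follows essentially the same route as the paper's: you identify the Tschirnhausen bundle by pushing forward the ideal-sheaf sequence of $C_v \subset \P E^\gen$ (you use the twist giving the dual sequence \eqref{dual structure sheaf sequence}, the paper uses the untwisted sequence giving \eqref{structure sheaf sequence} --- these are equivalent by duality), invoke the Casnati--Ekedahl resolution for surjectivity, and show an isomorphism of framed covers induces an automorphism of $E^\gen$, hence of $\P E^\gen/\P^1$, carrying $C_v$ to $C_{v'}$. Your closing discussion of the naturality of $\alpha^* E_\alpha \to \omega_\alpha$ just makes explicit what the paper leaves implicit in the sentence ``the sequence \eqref{eqn:rel_can_U} for $C_u$ and $C_v$ shows that such an isomorphism induces an isomorphism $E \to E$.''
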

\begin{proof}
  For brevity, set $E = E^\gen$. For $v \in V^\circ$, consider the
  sequence
  \[ 0 \to O_{\P E}(-3) \otimes \pi^* \det E \to O_{\P E} \to O_{C_v} \to 0.\]
  Applying $R\pi_*$, we get
  \begin{equation}\label{eqn:rel_can_U}
    0 \to O_{\P^1} \to \pi_* O_{C_u} \to  E^\vee \to 0,
  \end{equation}
  which says that the Tschirnhausen bundle of $C_u \to \P^1$ is $E$. Conversely, from the Casnati--Ekedahl resolution, it follows that every point of $U^\dagger_{E^\gen}$ is in the image of $q$.

  Let $u, v \in U^\dagger_{E^\gen}$ be in a fiber of $q$. Then there is an isomorphism $C_u \to C_v$ over the identity of $\P^1$. The sequence \eqref{eqn:rel_can_U} for $C_u$ and $C_v$ shows that such an isomorphism induces an isomorphism $E \to E$. The induced automorphism of $\P E$ over $\P^1$ takes $C_u$ to $C_v$ and hence $u$ to $v$. 
\end{proof}
\begin{proposition}\label{thm:PRC3}
  [Picard rank conjecture for degree three]
  We have $\Pic_{\Q}\H_{3,g} = 0$.
\end{proposition}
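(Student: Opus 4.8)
The plan is to bound $\rk \Pic_\Q \td\H_{3,g}$ and feed this into the excision sequence for the open inclusion $\H_{3,g} \subset \td\H_{3,g}$. In degree three the boundary $\td\H_{3,g}\setminus\H_{3,g}$ is the union of the two divisors $T$ and $\Delta$: the locus $D$ is empty, since a fiber of degree three cannot carry two distinct ramification points. Excision then gives the right-exact sequence
\[ \Q\,[T]\oplus\Q\,[\Delta] \xrightarrow{\,f\,} \Pic_\Q\td\H_{3,g}\to\Pic_\Q\H_{3,g}\to 0.\]
By \autoref{thm:lin_indep_boundary} the classes $[T]$ and $[\Delta]$ are linearly independent, so $f$ is injective with image of rank two; in particular $\rk\Pic_\Q\td\H_{3,g}\geq 2$. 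It therefore suffices to prove the reverse inequality $\rk\Pic_\Q\td\H_{3,g}\leq 2$, for then the sequence forces $\Pic_\Q\H_{3,g}=0$.

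To get the upper bound I would restrict to the open locus $U_{E^\gen}$ and use its quotient presentation. First, \autoref{thm:framed_unframed} identifies $\rk\Pic_\Q U_{E^\gen}$ with $\rk\Pic_\Q U^\dagger_{E^\gen}$, and \autoref{thm:3_quotient} exhibits $U^\dagger_{E^\gen}$ as the quotient of $V^\circ$ by $G=\Aut(\pi)$, with fibers equal to single orbits. \autoref{thm:pic_quotient} then yields
\[ \rk\Pic_\Q U^\dagger_{E^\gen}\leq \rk\chi(G) + \rk\Pic_\Q V^\circ.\]
Since $V^\circ$ is open in the projective space $\P_\sub V$, the restriction map from $\Pic\P_\sub V\cong\Z$ is surjective, so $\rk\Pic_\Q V^\circ\leq 1$; crucially, we never need to compute $\Pic_\Q V^\circ$ exactly. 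What remains is to evaluate $\chi(G)$ and to compare $U_{E^\gen}$ with $\td\H_{3,g}$, and I would split this according to the parity of $g$.

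The two parities balance one another, and I expect this balancing to be the main point. When $g$ is even, $E^\gen=O(\tfrac{g+2}{2})^{\oplus 2}$ is balanced, so $G=\PGL_2$ is reductive with $\chi(G)=0$, giving $\rk\Pic_\Q U^\dagger_{E^\gen}\leq 1$; meanwhile \autoref{thm:maroni3} says $\td\H_{3,g}\setminus U_{E^\gen}$ is a single irreducible Maroni divisor, so excision costs one more and $\rk\Pic_\Q\td\H_{3,g}\leq 1+1=2$. When $g$ is odd, $E^\gen=O(\tfrac{g+1}{2})\oplus O(\tfrac{g+3}{2})$ is unbalanced, so $G=\Aut(E^\gen)/\Gm$ is non-reductive; here $\Aut(E^\gen)$ is the lower-triangular group $\Gm^2\ltimes\mathbf{G}_a^2$ (the upper right $\Hom(O(\tfrac{g+3}{2}),O(\tfrac{g+1}{2}))$ vanishes), and its characters modulo the diagonal scalars give $\rk\chi(G)=1$, whence $\rk\Pic_\Q U^\dagger_{E^\gen}\leq 2$. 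Since now \autoref{thm:maroni3} gives that the complement of $U_{E^\gen}$ has codimension at least two, the restriction $\Pic_\Q\td\H_{3,g}\to\Pic_\Q U_{E^\gen}$ is an isomorphism and $\rk\Pic_\Q\td\H_{3,g}\leq 2$. In both cases $\rk\Pic_\Q\td\H_{3,g}\leq 2$, which combined with the lower bound above yields equality and hence $\Pic_\Q\H_{3,g}=0$. The only steps needing genuine care are the character computation for the non-reductive group in the odd case and the bookkeeping ensuring that exactly one unit of rank is spent in each parity; everything else is a formal consequence of the results already in hand.
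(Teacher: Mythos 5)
Your proposal is correct and follows essentially the same route as the paper: the quotient presentation $V^\circ \to U^\dagger_{E^\gen} \to U_{E^\gen}$ combined with \autoref{thm:pic_quotient}, the parity-dependent computation of $\rk\chi(G)$ (zero for $\PGL_2$ when $g$ is even, one for the triangular group modulo scalars when $g$ is odd) balancing against the presence or absence of the Maroni divisor from \autoref{thm:maroni3}, and finally the linear independence of the two boundary classes $T$ and $\Delta$ from \autoref{thm:lin_indep_boundary}. Your explicit observation that $D$ is empty for $d = 3$ and your excision bookkeeping merely spell out what the paper leaves implicit, so there is nothing to flag.
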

\begin{proof}
  Retain the notation introduced above. For brevity, set $U = U_{E^\gen}$ and $U^\dagger = U_{E^\gen}^\dagger$. Then $V^\circ \to U^\dagger$ is a quotient by $G$ and $U^\dagger \to U$ is a quotient by $\PGL_2$. By \autoref{thm:pic_quotient} and \autoref{thm:3_quotient}, we have 
  \begin{align*}
    \rk \Pic_\Q U \leq \rk \Pic_\Q U^\dagger + \rk \chi (\PGL_2) &= \rk \Pic_\Q U^\dagger \\
    &\leq \rk \Pic_\Q V^\circ + \rk \chi(G) \leq 1 + \rk \chi (G).
  \end{align*}
  The final inequality follows because $V^\circ$ is an open subset of a projective space. Let $e$ be the number of divisorial components of $\td \H_{3,g} \setminus U$. We then get the bound
  \begin{align*}
    \rk \Pic_\Q \td \H_{3,g} \leq \rk \Pic_\Q U +  e \leq 1 + \rk\chi(G) + e.
  \end{align*}
  If $g$ is even, then
  \begin{align*}
    G &= \PGL_2 \\
    \rk\chi(G) &= 0 \\
    e &= 1 \quad \text{by \autoref{thm:maroni3}}.
  \end{align*}
  If $g$ is odd, then
  \begin{align*}
    G &= \left\{\begin{pmatrix}
        a & l \\
        & b
      \end{pmatrix} \mid a, b \in \C^*, l \in H^0(\P^1, O(1)) \right\} \big{/} \C^* \\
    \rk\chi(G) &= 1 \\
    e &= 0 \quad \text{by \autoref{thm:maroni3}}.
  \end{align*}
  In either case, we have
  \[ \rk \Pic_\Q \td \H_{3,g} \leq 2.\]
  By \autoref{thm:lin_indep_boundary}, the classes in $\Pic_\Q(\td \H_{3,g})$ of the two components of $\td \H_{3,g} \setminus \H_{3,g}$ are linearly independent. Therefore, we get $\Pic_\Q\H_{3,g} = 0$ as desired.
\end{proof}

\section{Degree four}
Let $C$ be a curve of genus $g$ and $\alpha \from C \to \P^1$ a map of
degree four. The relative canonical map embeds $C$ into a
$\P^2$-bundle $\P E$ over $\P^1$, where $E$ is a vector bundle of rank
three and degree $g+3$. The Casnati-Ekedahl structure theorem provides
the following resolution of $O_C$:
\[ 0 \to \pi^{*} \det E (-4) \to \pi^{*}F(-2) \to O_{\P E} \to O_C \to 0,\]
where $F$ is a vector bundle of rank two and degree $g+3$.

Explicitly, we can describe $C \subset \P E$ as follows. Write $F =
O(a) \oplus O(b)$, where $a+b = g+3$, and $a \leq b$. Let $h$ denote
the divisor class associated to $O_{\P E}(1)$ on $\P E$ and $f$ the
class of the fiber of the projection $\pi \from \P E \to \P^1$. Then
the curve $C$ is the complete intersection of two divisors
\[C = Q_{a} \cap Q_{b},\] 
where $[Q_{a}] = 2h-af$ and $[Q_{b}] = 2h-bf$.

Even more explicitly, we can describe the equations of $Q_a$ and $Q_b$
as follows.  Write $E = O(m_{1}) \oplus O(m_{2}) \oplus
O(m_{3})$. Over an open set $U \subset \P^1$, let $X, Y,$ and $Z$
denote the relative coordinates on $\P E|_U$ corresponding to the
three summands of $E$. Assume that $m_{1} \leq m_{2} \leq m_{3}$. Over
$U$, the divisor $Q_{a}$ is the zero locus of a form
\begin{equation}\label{relativequadrica}
  p_{1,1}X^2 + p_{1,2}XY + p_{1,3}XZ + p_{2,2}Y^2 + p_{2,3}YZ + p_{3,3}Z^2
\end{equation}
where $p_{i,j}$ is the restriction to $U$ of a global section of
$O(m_{i} + m_{j} -a)$. Similarly, over $U$, the divisor $Q_{b}$ is the
zero locus of a form
\begin{equation}\label{relativequadricb}
q_{1,1}X^2 + q_{1,2}XY + q_{1,3}XZ + q_{2,2}Y^2 + q_{2,3}YZ + q_{3,3}Z^2
\end{equation}
where $q_{i,j}$ is the restriction to $U$ of a global section of
$O(m_{i} + m_{j} -b)$.

The irreducibility of $C$ puts some restrictions on the possible
$(E,F)$. Indeed, if $p_{1,1} = q_{1,1} = 0$, then the section $[X:Y:Z]
= [1:0:0]$ of $\P E$ is contained in both $Q_{a}$ and $Q_{b}$, making
$C = Q_a \cap Q_b$ reducible. An irreducible $C$ thus forces
\begin{equation}\label{splitdirectrix}
  2m_1 \geq a.
\end{equation}

\begin{proposition}\label{thm:Mef_irred}
  Let $E$ be a vector bundle of rank 3 and degree $g+3$ and $F$ a
  vector bundle of rank 2 and degree $g+3$. If the locus $M(E, F)$
  is non-empty, then it is irreducible and unirational.
\end{proposition}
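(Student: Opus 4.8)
The plan is to mimic the degree three argument (\autoref{thm:3_quotient}), realizing $M(E,F)$ as the closure of the image of an open subset of an affine space. By the complete intersection description preceding the proposition, a cover with Tschirnhausen bundle $E$ and first syzygy bundle $F$ is cut out in $\P E$ by a pair of relative quadrics $(Q_a, Q_b)$, where $Q_a$ is a section of $O_{\P E}(2) \otimes \pi^* O(-a)$ and $Q_b$ a section of $O_{\P E}(2) \otimes \pi^* O(-b)$. Such a pair is the same datum as a homomorphism $F \to \Sym^2 E$, so the natural parameter space is the finite-dimensional vector space
\[ W := \Hom(F, \Sym^2 E) = H^0(\P^1, \Sym^2 E \otimes F^\vee),\]
whose elements recover the coefficients $p_{i,j}$ and $q_{i,j}$ of \eqref{relativequadrica} and \eqref{relativequadricb}. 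For $\phi \in W$ let $C_\phi \subset \P E$ be the common zero locus of the two quadrics, and let $W^\circ \subset W$ be the open locus where $C_\phi$ is smooth and irreducible and $\pi|_{C_\phi} \from C_\phi \to \P^1$ is simply branched. The assignment $\phi \mapsto [\pi|_{C_\phi}]$ then defines a morphism $q \from W^\circ \to \H^\dagger_{4,g}$.

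I claim $q(W^\circ) = M^\circ(E,F) := \{ [\alpha] \in \H^\dagger_{4,g} \mid E_\alpha \cong E, \ F_\alpha \cong F \}$, whose closure is $M(E,F)$ by definition. For any $\phi \in W^\circ$ the curve $C_\phi$ is a genuine complete intersection, so its Koszul resolution is
\[ 0 \to O_{\P E}(-4) \otimes \pi^* O(a+b) \to O_{\P E}(-2) \otimes \pi^*(O(a) \oplus O(b)) \to O_{\P E} \to O_{C_\phi} \to 0.\]
Since $a + b = g + 3$ and $\det E = O(g+3)$, this is exactly the Casnati--Ekedahl resolution \eqref{eqn:casnati_resolution} with Tschirnhausen bundle $E$ and first syzygy bundle $F$; hence $q(\phi) \in M^\circ(E,F)$. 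Conversely, \autoref{thm:CE} embeds the source of any $[\alpha] \in M^\circ(E,F)$ as such a complete intersection in $\P E$ once isomorphisms $E_\alpha \cong E$ and $F_\alpha \cong F$ are chosen, exhibiting $[\alpha]$ as $q(\phi)$ for some $\phi \in W^\circ$. Finally, the hypothesis that $M(E,F)$ is nonempty produces a point of $M^\circ(E,F)$, and hence $W^\circ$ is a nonempty open subset of $W$.

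The conclusion is then formal. As $W$ is an affine space, its nonempty open subset $W^\circ$ is irreducible and rational, so $M^\circ(E,F) = q(W^\circ)$ is irreducible and its closure $M(E,F)$ is irreducible; moreover the composite $W \dashrightarrow M(E,F)$ is a dominant rational map from an affine space, so $M(E,F)$ is unirational. I expect no serious obstacle here: unlike the degree three case, the argument requires neither a description of the fibers of $q$ nor of the group $\Aut(\pi) \times \Aut(F)$ acting on $W$, since we only need the image of $q$, not the quotient (that finer analysis is needed only later, to bound the Picard rank). The one point demanding genuine care is that the two relative quadrics cut $C_\phi$ out scheme-theoretically, so that the Koszul complex really is a resolution and the bundles emerge as precisely $E$ and $F$; this is guaranteed by \autoref{thm:CE} together with the complete intersection description, and so the true substance of the degree four case lies instead in subsequently deciding which pairs $(E,F)$ make $M(E,F)$ nonempty and which ones are divisorial.
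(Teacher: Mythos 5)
Your proof is correct and takes essentially the same approach as the paper: the paper likewise parametrizes the pairs of relative quadrics by the vector space $V := H^0(\P^1, F^\vee \otimes \Sym^2 E)$ and observes that the open locus $V^\circ$ of maps $\pi^*F(-2) \to O_{\P E}$ cutting out smooth, simply branched curves surjects onto $M^\circ(E,F)$, whose closure is $M(E,F)$. Your additional verifications --- that the Koszul resolution of the complete intersection matches \eqref{eqn:casnati_resolution} so the image lands in $M^\circ(E,F)$, and the converse surjectivity via \autoref{thm:CE} --- merely make explicit what the paper leaves implicit.
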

\begin{proof}
  Consider the dense open subset $M^\circ(E, F) \subset M(E,F)$ corresponding to $\alpha \in \H_{4,g}$ that have
  $E_\alpha \cong E$ and $F_\alpha \cong F$. It suffices to prove the
  statement for $M^\circ(E,F)$.

  Consider the vector space 
  \[ V := H^0(\P^1, F^\vee \otimes \Sym^2 E).\] Elements of $V$
  correspond to maps $\pi^* F(-2) \to O_{\P E}$. Let $V^\circ \subset
  V$ be the open subset where the ideal generated by the image of
  $\pi^*F(-2)$ defines a smooth curve, simply branched over
  $\P^1$. Then $V^\circ$ surjects onto $M^\circ(E,F)$.
\end{proof}

\begin{remark}
  From the dominant map $V^\circ \to M(E,F)$ in the proof of
  \autoref{thm:Mef_irred}, it is easy to compute the codimension of
  $M(E,F)$ in $\H_{4,g}$, which is
  \[ \codim M(E, F) = \dim \Ext^1(E,E) + \dim \Ext^1(F,F) - \dim
  \Ext^1(F, \Sym^2 F).\] We may think of $\dim \Ext^1(E,E) + \dim
  \Ext^1(F,F)$ as the `expected codimension.' The next example shows
  that the actual codimension is not always the expected codimension.
\end{remark}

\begin{example} \label{ex:1}
  Let $E = O(m) \oplus O(2m) \oplus O(g+3-3m)$, where $\lceil
  \frac{g+3}{6} \rceil \leq m < \frac{g+3}{5}$. To get an irreducible
  curve $C$, the only possibility for $F$ is $F = O(2m) \oplus
  O(g+3-2m)$, by \eqref{splitdirectrix}.  The resulting locus
  $M(E,F)$ is not of expected codimension because $\dim \Ext^{1}
  (F,\Sym^2E)$ is nonzero.
\end{example}

\begin{example} \label{ex:2}
  The Maroni locus $M(E)$ may be reducible. Let $g = 12$, and
  consider the bundle $E = O(3) \oplus O(5) \oplus O(7)$.  Then the
  reader can easily check (using Bertini's theorem) that $M(E,F)$ and
  $M(E,F')$ are nonempty and of \emph{equal} codimension $\dim
  \Ext^{1}(E,E)$ for the bundles $F = O(6) \oplus O(9)$ and $F' = O(5)
  \oplus O(10)$.  Therefore $M(E,F)$ and $M(E,F')$ are two components
  of $M(E)$. It is easy to see by analyzing the explicit equations
  that these are the only components of $M(E)$.
\end{example}

Let $E^\gen$ (resp. $F^\gen$) be the most generic vector bundle on $\P^1$ of rank 3 (resp. 2) and degree $g+3$. Define
\begin{align*}
  U_{E^\gen} &:= \{ \alpha \in \td \H_{4,g} \mid {E}_\alpha \cong {E^\gen}\}, \\
  U_{F^\gen} &:= \{ \alpha \in \td \H_{4,g} \mid {F}_\alpha \cong {F^\gen}\}, \\
  U_{{E^\gen},{F^\gen}} &:= U_{E^\gen} \cap U_{F^\gen}.
\end{align*}
It is easy to see that these are are open subsets of $\td
\H_{d,g}$. Our next task is to identify the divisorial components of
their complements.

\begin{proposition}\label{thm:4_compUe}
  The subvariety $M := \td\H_{4,g} \setminus U_{E^\gen}$ is a
  divisor if and only if $g$ is divisible by three, in which case it
  is irreducible.
\end{proposition}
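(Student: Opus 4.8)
The plan is to exploit the fact that $M$ depends only on the Tschirnhausen bundle, so that $M=\bigcup_{E\not\cong E^\gen}\td M(E)$, and to treat separately the interior $\H_{4,g}$ and the boundary $\td\H_{4,g}\setminus\H_{4,g}=T\cup D\cup\Delta$. Over the interior, $M\cap\H_{4,g}$ is precisely the union of the non-generic Maroni loci $M(E)\subset\H_{4,g}$, whose divisorial behavior is completely governed by \autoref{maronidivisor}. In degree $d=4$ that statement reads: a single $M(E)$ is a divisor exactly when $g=3(k-1)$ for an integer $k$ --- that is, when $3\mid g$ --- and in that case $E=E[k-1]=O(k-1)\oplus O(k)\oplus O(k+1)$ and $M(E[k-1])$ is irreducible. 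Since \autoref{maronidivisor} shows that all other non-generic $M(E)$ have codimension at least two, the interior contributes a codimension-one component if and only if $3\mid g$, and then a unique, irreducible one. (The passage between $\H^\dagger_{4,g}$ and $\H_{4,g}$ is harmless here, as all the loci are $\PGL_2$-invariant; cf. \autoref{thm:framed_unframed}.)

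It then remains to show that the boundary adds no further codimension-one component, i.e. that $M\cap(T\cup D\cup\Delta)$ has codimension at least two in $\td\H_{4,g}$. Because $T$, $D$, and $\Delta$ are irreducible divisors (\autoref{thm:lin_indep_boundary}), it is enough to prove that each of them meets the open set $U_{E^\gen}$: an open set meeting an irreducible variety contains its generic point, so $U_{E^\gen}$ would then contain the generic point of each boundary divisor, forcing $M$ to meet the boundary only in a proper closed subset. Equivalently, I must exhibit, for each of the three boundary types, a single cover whose Tschirnhausen bundle is the balanced bundle $E^\gen$.

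A pure deformation argument does not suffice, because the Tschirnhausen bundle can jump to a more special bundle in a flat limit; coping with this is the main obstacle. I would instead build the required boundary covers directly inside $\P E^\gen$, using the complete-intersection description recorded just before the statement: a smooth, irreducible complete intersection $C=Q_a\cap Q_b\subset\P E^\gen$ of two relative quadrics with $F_\alpha=F^\gen$ automatically has $E_\alpha\cong E^\gen$ by \autoref{thm:CE}. Each boundary type is then obtained by imposing one local degeneration on the pair $(Q_a,Q_b)$ while keeping the remaining coefficients generic, so that $C$ stays irreducible of arithmetic genus $g$ and exhibits no other special behavior. Concretely: making the two quadric surfaces $Q_a,Q_b$ tangent at one point of $\P E^\gen$ forces a node of $C$ and places the cover in $\Delta$; making the two conics cut out in a single fiber $\P E^\gen_0\cong\P^2$ meet with a contact of order three (so the fiber is $3p_1+p_2$) places it in $T$; and making them bitangent (fiber $2p_1+2p_2$) places it in $D$. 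Each of these is a nonempty condition in the parameter space $V=H^0(\P^1,(F^\gen)^\vee\otimes\Sym^2 E^\gen)$ that dominates $U_{E^\gen}$, so each of $T$, $D$, $\Delta$ meets $U_{E^\gen}$.

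Granting the three constructions, $M\cap(T\cup D\cup\Delta)$ is a proper closed subset of each boundary divisor, hence of codimension at least two, and all codimension-one behavior of $M$ originates in the interior. Combined with the first paragraph, this shows that $M$ has a codimension-one component if and only if $3\mid g$, in which case its unique such component is the irreducible divisor $\overline{M(E[k-1])}$. The genuinely substantive step is the explicit boundary construction of the previous paragraph; the interior assertion is immediate from \autoref{maronidivisor}.
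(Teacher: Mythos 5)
Your proposal is correct, and it is in fact more complete than the paper's own proof, which consists of the single line ``this is the degree $4$ case of \autoref{maronidivisor}.'' The interior half of your argument coincides with the paper's: \autoref{maronidivisor} with $d=4$ gives exactly the dichotomy $g=3(k-1)$, the bundle $E[k-1]=O(k-1)\oplus O(k)\oplus O(k+1)$, and the irreducibility. Where you genuinely go beyond the paper is the boundary: \autoref{maronidivisor} is a statement about $M(E)\subset\H_{d,g}$, i.e.\ about smooth, simply branched covers, whereas the proposition concerns $\td\H_{4,g}$; and the extension of the Maroni-locus analysis to $\td M(E)\subset\td\H^\dagger_{d,g}$ in \autoref{thm:maroniloci}(5) is only asserted for $d$ prime, so it does not literally apply to $d=4$. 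You correctly identify that one must therefore rule out divisorial components of $M$ supported on $T\cup D\cup\Delta$, that semicontinuity alone cannot do this (the Tschirnhausen bundle can jump in a flat limit), and that the right fix is to exhibit one point of each of the irreducible divisors $T$, $D$, $\Delta$ lying in $U_{E^\gen}$. Your construction --- irreducible, at worst nodal complete intersections $Q_a\cap Q_b\subset\P E^\gen$ with a single imposed fiberwise degeneration (tangency of the two quadrics for $\Delta$, order-three contact or bitangency of the two fiber conics for $T$ and $D$) --- does this, and the identification $E_\alpha\cong E^\gen$ for such curves is exactly the pushforward-of-the-resolution argument underlying \autoref{thm:4_quotient}; indeed, an equivalent packaging is to observe that $V^\circ$ there visibly contains points whose curves are nodal or have fibers of type $3p_1+p_2$ and $2p_1+2p_2$, so all three boundary divisors meet $U^\dagger_{E^\gen,F^\gen}$. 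The only soft spots are the unverified Bertini/genericity claims (that the evaluation of $V=H^0(\P^1,(F^\gen)^\vee\otimes\Sym^2 E^\gen)$ at a fiber is surjective onto pairs of conics, and that the remaining coefficients can be chosen so the curve is irreducible with no further degeneration); these are routine for the balanced bundles $E^\gen$, $F^\gen$, and are at the same level of rigor the paper itself employs in the explicit-quadric arguments of \autoref{thm:4_compUf}. In short: same key lemma for the interior, plus a legitimate and needed boundary argument that the paper's one-line proof elides.
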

\begin{proof}
  This is the degree $4$ case of \autoref{maronidivisor}.
\end{proof} 

For the complement of $U_{F^\gen}$, we could do a careful analysis of the
defining equations of $C$ in $\P E$, as we will have to do for the
next case of $d = 5$. But we can take a more geometric approach using
the \emph{resolvent cubic construction}. Originally due to Recillas
\cite{recillas:73}, the construction can be described as
follows. For simplicity, we give an informal description, restricting
to simply branched covers. See \cite{casnati:trigconst} for a detailed
account. Consider a point $[\alpha \from C \to \P^1]$ of
$\H_{4,g}$. The resolution of $O_C$ as an $O_{\P E_\alpha}$ module
shows that $C \subset \P E_\alpha$ is the complete intersection of two
relative quadrics. A fiber of $\P F_\alpha \to \P^1$ naturally
corresponds to the pencil of conics in the corresponding fiber of $\P
E_\alpha \to \P^1$ containing the corresponding fiber of $C \to
\P^1$. Each such pencil contains three singular conics, counted with
multiplicity. The total locus of these singular conics forms a
trigonal curve $R(C) \subset \P F_\alpha$. Let $R(\alpha) \from R(C)
\to \P^1$ be the projection. We call $R(\alpha)$ the \emph{resolvent
  cubic} of $\alpha$. Using that $C \to \P^1$ is simply branched, it
is easy to check that $R(C)$ is smooth and the branch divisor of
$R(\alpha)$ coincides with the branch divisor of $\alpha$. In
particular, $R(C)$ has genus $g+1$. The association $\alpha \to
R(\alpha)$ defines a map
\[ R \from \H_{4,g} \to \H_{3,g+1},\] which we call the
\emph{resolvent cubic map}. The fiber of $R$ over a point $[D \to
\P^1] \in \H_{3,g+1}$ corresponds bijectively to the set of \'etale
double covers $D' \to D$ (see \cite{recillas:73} or
\cite[Theorem~6.5]{casnati:trigconst}). In particular, $R$ is a finite
morphism.

\begin{proposition}\label{thm:4_C(F)}
  Let $F$ be a vector bundle of rank 2 and degree $g+3$ on $\P^1$. The
  Casnati-Ekedahl locus $C(F) \subset \H_{4,g}$ is
  non-empty if and only if $\lfloor F \rfloor \geq \lceil
  \frac{g+3}{3} \rceil$. In this case, it is of the expected
  codimension $\dim \Ext^1(F,F)$.
\end{proposition}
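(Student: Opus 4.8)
The plan is to bypass the explicit equations and instead exploit the resolvent cubic map $R \from \H_{4,g} \to \H_{3,g+1}$, which is finite by the discussion above. The whole argument rests on one identification: the Casnati--Ekedahl bundle $F_\alpha$ of a degree four cover $[\alpha \from C \to \P^1]$ coincides with the Tschirnhausen bundle $E_{R(\alpha)}$ of its resolvent cubic. Granting this, the Casnati--Ekedahl locus $C(F) \subset \H_{4,g}$ is carried by $R$ to the degree three Maroni locus $M(F) \subset \H_{3,g+1}$, where everything is already controlled by \autoref{thm:constraintsE} and \autoref{thm:maroniloci}, and both assertions of the proposition follow by transport through $R$.

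To establish $F_\alpha \cong E_{R(\alpha)}$, recall how $R(C)$ is built: a fiber of $\P F_\alpha \to \P^1$ is the pencil of conics through the corresponding fiber of $C$, and $R(C)$ meets that fiber in the three singular members of the pencil. For general $\alpha$ these three points are distinct, so $R(C)$ is a relatively nondegenerate trigonal curve inside $\P F_\alpha$, and hence $\P F_\alpha$ agrees, as a $\P^1$-bundle over $\P^1$, with the canonical scroll $\P E_{R(\alpha)}$ of the cover $R(\alpha) \from R(C) \to \P^1$ (this is the geometric content of the resolvent construction; see \cite{casnati:trigconst}). Thus $F_\alpha \cong E_{R(\alpha)} \otimes L$ for some line bundle $L$ on $\P^1$. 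Both bundles have rank two and degree $g+3$: for $F_\alpha$ this is given, and for $E_{R(\alpha)}$ it follows from \eqref{eqn:rk_deg_E} applied to the genus $g+1$, degree $3$ cover $R(\alpha)$. Comparing degrees forces $\deg L = 0$, hence $F_\alpha \cong E_{R(\alpha)}$. I expect pinning down this identification to be the main obstacle; everything after it is bookkeeping.

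With the identification in hand, let $M^\circ(F) \subset \H_{3,g+1}$ be the locally closed locus where the Tschirnhausen bundle is exactly $F$, so that the corresponding open locus $C^\circ(F) \subset C(F)$ equals $R^{-1}(M^\circ(F))$. Because $R$ is finite and surjective, $C^\circ(F)$ is nonempty if and only if $M^\circ(F)$ is. Moreover $\dim \H_{4,g} = \dim \H_{3,g+1} = 2g+3$, so the finite surjection $R$ preserves codimension, giving $\codim C(F) = \codim M(F)$.

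It then remains to read off $M(F)$ in $\H_{3,g+1}$, that is, to apply the degree three theory with $g$ replaced by $g+1$. Since $3$ is prime, every cover of an irreducible curve is non-factoring, so \autoref{thm:constraintsE} yields the necessary bound $\lfloor F \rfloor \geq \frac{(g+1)+3-1}{\binom{3}{2}} = \frac{g+3}{3}$, equivalently $\lfloor F \rfloor \geq \lceil \frac{g+3}{3}\rceil$; the upper constraint is automatic since $\lfloor F \rfloor \leq (g+3)/2$ for a rank two bundle. Conversely, \autoref{thm:maroniloci} together with \autoref{thm:coppens} shows this bound is sufficient: for a rank two bundle the single integer $\lfloor F \rfloor$ determines $F = O(\lfloor F \rfloor) \oplus O(g+3-\lfloor F \rfloor)$, and every admissible value is realized. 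For the codimension, \autoref{thm:maroniloci}(4) gives $\codim M(F) = (g+1) - 2\lfloor F \rfloor + 1 = g+2-2\lfloor F \rfloor$ when $F$ is unbalanced, while $M(F)$ is all of $\H_{3,g+1}$ in the balanced generic case. Writing $F = O(a) \oplus O(b)$ with $a \leq b$ and $a+b = g+3$, the standard computation $\dim \Ext^1(F,F) = h^1(\P^1, O(a-b)) = \max(0, b-a-1)$ matches this codimension in every case, which yields $\codim C(F) = \dim \Ext^1(F,F)$ and finishes the proof.
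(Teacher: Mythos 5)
Your proposal is correct and takes essentially the same route as the paper: the paper's proof likewise identifies $F_\alpha$ with the Tschirnhausen bundle $E_{R(\alpha)}$ of the resolvent cubic (justified there in one line by $R(C) \subset \P F_\alpha$ together with the rank and degree count) and then transports both assertions through the finite, surjective map $R$ to the degree-three Maroni loci $M(F) \subset \H_{3,g+1}$. Your write-up simply makes explicit what the paper leaves implicit, namely the twist-by-$L$ argument pinning down the identification and the degree-three bookkeeping via \autoref{thm:constraintsE}, \autoref{thm:coppens}, and \autoref{thm:maroniloci}, all of which checks out.
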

\begin{proof}
  Consider a point $[\alpha \from C \to \P^1]$ of $\H_{4,g}$ and
  its resolvent cubic $R(\alpha) \from R(C) \to \P^1$. Since $R(C)
  \subset \P F_\alpha$, and $F_\alpha$ is a vector bundle of rank two
  and degree $(g+1)+2$, it must be the Tschirnhausen bundle of
  $R(C)$. That is, we have $E_{R(\alpha)} = F_\alpha$. By
  \cite{recillas:73}, the map $R$ is finite, and hence
  ${C}(F) = R^{-1}({M}(F)).$ Both of the statements about
  ${C}(F)$ now follow from the corresponding statements about $M(F)$.
\end{proof}

 \begin{proposition}\label{thm:4_compUf}
   Let $g \geq 4$. The subvariety $CE := \H_{4,g} \setminus
   U_{F^\gen}$ has codimension at least two if $g$ is even and is an
   irreducible divisor if $g$ is odd.
 \end{proposition}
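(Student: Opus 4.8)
The plan is to read off the codimension of $CE$ from the Casnati--Ekedahl locus count already established in \autoref{thm:4_C(F)}, and then to isolate and analyze the single divisorial component in the odd case. Since $F_\alpha \cong F^\gen$ exactly when $F_\alpha$ is balanced, the complement $CE$ is the finite union $\bigcup_F C(F)$ over the unbalanced bundles $F$ of rank $2$ and degree $g+3$ for which $C(F)$ is nonempty. By \autoref{thm:4_C(F)}, each nonempty $C(F)$ has codimension $\dim\Ext^1(F,F)$, so the entire codimension question reduces to an elementary computation. Writing $F = O(a)\oplus O(b)$ with $a \le b$ and $a+b = g+3$, I have $F^\vee\otimes F = O(b-a)\oplus O(a-b)\oplus O\oplus O$, hence $\dim\Ext^1(F,F) = h^1(O(a-b)) = \max(0,\,b-a-1)$. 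Thus $C(F)$ is a divisor precisely when $b-a = 2$.

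The parity split is now immediate. If $g$ is even, then $a+b = g+3$ is odd, so $b-a$ is odd; every unbalanced $F$ satisfies $b-a \geq 3$, giving codimension $\geq 2$. Since imposing nonemptiness can only remove loci from the union, this bound is unconditional and $CE$ has codimension at least two. If $g$ is odd, then $b-a$ is even, and the \emph{unique} bundle with $b-a = 2$ is $F_1 = O(\tfrac{g+1}{2})\oplus O(\tfrac{g+5}{2})$, while every other unbalanced $F$ has $b-a \geq 4$ and codimension $\geq 3$. Because $\lfloor F_1\rfloor = \tfrac{g+1}{2} \geq \lceil\tfrac{g+3}{3}\rceil$ for all $g \geq 3$ (equivalently $g \geq 3 \iff 3(g+1)\geq 2(g+3)$), \autoref{thm:4_C(F)} guarantees $C(F_1)$ is nonempty, of codimension exactly one. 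Hence $CE$ is a divisor with unique divisorial component $C(F_1)$, and the remaining work is to prove $C(F_1)$ is irreducible.

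For irreducibility I would show $C(F_1) = \overline{M(E^\gen, F_1)}$. The generic point of $C(F_1)$ should carry $E_\alpha = E^\gen$: the split-directrix constraint \eqref{splitdirectrix} reads $2\lfloor E\rfloor \geq a = \tfrac{g+1}{2}$, and $E^\gen$ satisfies this (indeed $2\lfloor(g+3)/3\rfloor \geq \tfrac{g+1}{2}$ for $g\geq 3$), so $M(E^\gen,F_1)$ is nonempty and open in $C(F_1)$. By \autoref{thm:Mef_irred}, $M(E^\gen,F_1)$ is irreducible (and unirational). It then remains to exclude any other component, i.e. to show that for every \emph{special} bundle $E' \not\cong E^\gen$ of rank $3$ and degree $g+3$, the locus $M(E',F_1)$ has codimension strictly greater than one. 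Using the codimension formula recorded after \autoref{thm:Mef_irred},
\[
\codim M(E',F_1) = \dim\Ext^1(E',E') + 1 - \dim\Ext^1(F_1,\Sym^2 E'),
\]
this amounts to proving $\dim\Ext^1(F_1,\Sym^2 E') \le \dim\Ext^1(E',E') - 1$ for all such $E'$.

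This last inequality is the main obstacle, precisely the phenomenon flagged in \autoref{ex:1}, where the term $\Ext^1(F_1,\Sym^2 E')$ can be nonzero and spoil the expected codimension. The plan to control it: writing $E' = O(m_1)\oplus O(m_2)\oplus O(m_3)$ with $m_1 = \lfloor E'\rfloor$, a summand of $\Sym^2 E'\otimes F_1^\vee$ can contribute to $h^1$ only when some $m_i+m_j$ is small relative to $b = \tfrac{g+5}{2}$; in particular a nonzero contribution forces $2m_1 \le b-2$, i.e. $\lfloor E'\rfloor$ small and hence $E'$ far from balanced. One then invokes Ohbuchi's tameness bound \eqref{eqn:tame} of \autoref{thm:ohbuchi} to constrain the profile $(m_1,m_2,m_3)$, and checks that such an unbalanced $E'$ necessarily has $\dim\Ext^1(E',E') = \sum_{i<j}\max(0,|m_i-m_j|-1)$ large enough to dominate $\dim\Ext^1(F_1,\Sym^2 E')+1$. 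Carrying out this comparison uniformly over all admissible $E'$ is the genuinely computational heart of the argument; once it is done, $M(E^\gen,F_1)$ is the unique top-dimensional component and $C(F_1)$ is irreducible. (As a consistency check and possible alternative, one may instead use the resolvent cubic map $R\from\H_{4,g}\to\H_{3,g+1}$, under which $F_1$ is the next-to-balanced Tschirnhausen bundle on $\H_{3,g+1}$, so that $C(F_1) = R^{-1}(M(F_1))$ with $M(F_1)$ the irreducible Maroni divisor of \autoref{thm:maroni3}; there the obstacle migrates into showing that the finite cover $R$ stays connected over $M(F_1)$, a transitivity-of-monodromy statement of comparable difficulty.)
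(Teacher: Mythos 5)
The codimension half of your argument is correct, and it is essentially the paper's own bound repackaged: \autoref{thm:4_C(F)} is itself proved via the resolvent cubic map $R \from \H_{4,g} \to \H_{3,g+1}$, and the paper obtains the parity dichotomy directly by writing $CE = R^{-1}(Z)$, where $Z \subset \H_{3,g+1}$ is the complement of the locus of balanced Tschirnhausen bundles, and quoting \autoref{thm:maroni3}; your computation $\dim \Ext^1(F,F) = \max(0, b-a-1)$ together with the parity of $b-a$ encodes the same information, so citing \autoref{thm:4_C(F)} for this part is legitimate.

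The irreducibility half, however, is a plan rather than a proof, and the deferred step is exactly where the content lies. First, nonemptiness of $M(E^\gen, F_1)$ does not follow from \eqref{splitdirectrix}, which is only a necessary condition; one must actually construct a cover, e.g.\ by choosing the two relative quadrics generically, as the paper does at the analogous moment. Second, and more seriously, the uniform inequality $\dim \Ext^1(F_1, \Sym^2 E') \leq \dim \Ext^1(E', E') - 1$ that you defer is genuinely delicate: it \emph{fails} for $g = 3$, $E' = O(1) \oplus O(2) \oplus O(3)$, $F_1 = O(2) \oplus O(4)$, where $\dim \Ext^1(E',E') = 1$ and $h^1(F_1^\vee \otimes \Sym^2 E') = 1$, so that $M(E', F_1)$ is a divisor --- this is precisely the paper's example following \autoref{thm:4_compUf}, where $CE$ has two components. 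Since nothing in your sketch uses the hypothesis $g \geq 4$, the plan as written would ``prove'' a false statement; any correct execution must locate where $g \geq 5$ (odd) enters, and that is the uncarried-out computational heart. The paper sidesteps this uniform computation entirely: because $R$ is finite and the Hurwitz space is $\Q$-factorial, every component of $CE = R^{-1}(Z)$ is divisorial (purity), so only divisorial $M(E, F_1)$ matter; a divisorial $M(E, F_1)$ with $E \neq E^\gen$ would force $M(E, F_1) = M(E)$ with $E = O(m-1) \oplus O(m) \oplus O(m+1)$ by \autoref{maronidivisor}, and this single case is excluded by exhibiting a cover in $M(E, F^\gen)$ via two generic quadrics. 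Third, even granting your inequality, you would conclude only that $M(E^\gen, F_1)$ is the unique \emph{divisorial} component: without the purity input just described (or a separate specialization argument placing every $M(E', F_1)$, and every $C(F)$ with $b - a \geq 4$, inside $\overline{M(E^\gen, F_1)}$), components of codimension at least two are not ruled out, so the stated conclusion that $CE$ \emph{is} an irreducible divisor is not reached. Finally, your parenthetical misreads the resolvent-cubic alternative: the paper never proves or needs connectedness or transitivity of monodromy of $R$ over the Maroni divisor --- finiteness (for purity) plus the $M(E,F)$ component analysis suffices, which is a much weaker statement than the one you anticipate having to prove.
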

\begin{proof}
  The image $R(U_{F^\gen}) \subset \H_{3,g+1}$ is the open locus of
  trigonal covers having $F^\gen$ as their Tschirnhausen bundle. The
  complement $Z := \H_{3,g+1} \setminus R(U_{F^\gen})$ has codimension
  at least two if $g+1$ is odd and it is the Maroni divisor if $g+1$
  is even (\autoref{thm:maroni3}). The complement $\H_{4,g} \setminus
  U_{F^\gen}$ is the preimage $R^{-1}(Z)$. Therefore, the statements
  about the codimension follow from the finiteness of $R$.

  For the question of reducibility, let $F = O(k-1) \oplus O(k+1)$
  with $k = (g+3)/2 \geq 3$. The claim is that $C(F)$ is irreducible
  when $g > 3$, and has two components when $g=3$. We have
  \[C(F) = \bigcup_{E} M(E,F).\]
  By \autoref{thm:Mef_irred}, the varieties $M(E,F)$ are
  irreducible. Therefore, every component of $C(F)$ must be of
  the form $M(E,F)$ for some $E$.

  Let $g > 3$ and suppose $E \neq E^\gen$. The inclusion $M(E, F)
  \subset M(E)$ and \autoref{maronidivisor} imply that $ M(E,
  F)$ is a divisor if and only if $M(E, F) =  M(E)$ and $E =
  O(m-1) \oplus O(m) \oplus O(m+1)$. By choosing two generic quadrics
  as in \eqref{relativequadrica} and \eqref{relativequadricb}, we can
  explicitly construct a curve in $ M(E, F^\gen)$, showing that
  $ M(E, F) \neq M(E)$. Thus, it follows that the only
  component of $C(F)$ is $M(E^\gen, F)$.
\end{proof}

\begin{example}
  The divisor $\H_{4,g} \setminus U_{F^\gen}$ is not irreducible
  for $g = 3$. Indeed, take $F = O(2) \oplus O(4)$. Then $M(E^\gen, F)$ is an irreducible component. Now consider the only
  other possibility for $E$, namely $E = O(1) \oplus O(2) \oplus
  O(3)$. By \eqref{splitdirectrix}, a cover in $M(E)$ must have
  $F = O(2) \oplus O(4)$. Furthermore, for this $E$ and $F$, we can
  choose the two quadrics generically and see that $M(E, F)$ is
  nonempty. Therefore, $M(E) = M(E, F)$ is another component
  of $\H_{4,g} \setminus U_{F^\gen}$.
\end{example}

Our next goal is to exhibit $U_{E^\gen, F^\gen}$ as a quotient. Let
$\pi \from \P E^\gen \to \P^1$ be the projection. For brevity, set $E
= E^\gen$ and $F = F^\gen$.  Set
\[ V := H^0(\P^1, {F}^\vee \otimes \Sym^2 E).\] An element $v \in
\P_{\sub}V$ corresponds to a map $\pi^* F(-2) \to O_{\P E}$. Let $C_v$
be the zero locus of the image of this map. Let $V^\circ \subset
\P_{\sub} V$ be the open locus consisting of $v \in \P_{\sub}V$ for
which $C_v$ is irreducible and at worst nodal. Let $G_F := \Aut(\P
F/\P^1)$ and $G_E := \Aut(\P E / \P^1)$. Then $G_F \times G_E$ acts on
$V^\circ$. The assignment
\[ v \mapsto [\pi \from C_v \to \P^1]\]
defines a map
\[q \from V^\circ \to \td \H^\dagger_{4,g}.\]
Denote by $U^\dagger_{E,F}$ the preimage of $U_{E,F}$ under $\td \H^\dagger_{4,g} \to \td \H_{4,g}$.
 \begin{proposition}\label{thm:4_quotient}
   The image of $q$ is $U^\dagger_{E^\gen,F^\gen}$. The fibers of $q$ consist of single $G$-orbits.
\end{proposition}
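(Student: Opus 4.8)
The plan is to mirror the degree three argument (\autoref{thm:3_quotient}), the essential new feature being that the two-term Casnati--Ekedahl resolution forces us to track two automorphism groups at once: $G_E = \Aut(\P E/\P^1)$, acting through the ambient $\P^2$-bundle, and $G_F = \Aut(\P F/\P^1)$, acting on the pair of defining quadrics.

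For the image, I would first show that every $v \in V^\circ$ lands in $U^\dagger_{E^\gen,F^\gen}$. The element $v$ is a map $\pi^*F(-2) \to O_{\P E}$, i.e.\ a pair of relative quadrics, whose image is $\mathcal I_{C_v}$; since $C_v$ is a curve in the threefold $\P E$ it has the expected codimension two, so the two quadrics form a regular sequence and the displayed Koszul complex is exact. Pushing forward by $\pi$ and using relative Serre duality for the $\P^2$-bundle (which contributes $R^2\pi_*(\pi^*\det E(-4)) \cong E^\vee$, while $\pi^*F(-2)$ has no derived pushforward), a short diagram chase produces the Tschirnhausen sequence $0 \to O_{\P^1} \to \pi_* O_{C_v} \to E^\vee \to 0$, identifying $E_{C_v} \cong E$ and, via $\chi(O_{C_v})$, pinning the arithmetic genus to $g$. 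Because $\pi|_{C_v}$ is finite of degree four, each fiber of $C_v$ is a length-four intersection of two non-proportional conics in $\P^2$, so the Koszul complex restricts fiberwise to the minimal free resolution; by the uniqueness clause of \autoref{thm:CE} it \emph{is} the Casnati--Ekedahl resolution, giving $F_{C_v} \cong F$. The reverse inclusion is immediate from \autoref{thm:CE}: for $[\alpha \from C \to \P^1] \in U^\dagger_{E^\gen,F^\gen}$, I choose isomorphisms $E_\alpha \cong E$ and $F_\alpha \cong F$, under which the structural map $\pi^*F_\alpha(-2) \to O_{\P E_\alpha}$ becomes an element $v \in V$ with $C = C_v$; irreducibility and nodality of $C$ place $v$ in $V^\circ$.

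For the fibers, suppose $q(u) = q(v)$, witnessed by an isomorphism $\phi \from C_u \to C_v$ over the identity of $\P^1$. As in degree three, functoriality of the Tschirnhausen sequence in the cover makes $\phi$ induce an automorphism of $E$, hence an element $g_E \in G_E$ whose induced automorphism of $\P E$ carries $C_u$ to $C_v$ (by functoriality of the relative canonical embedding). After replacing $u$ by $g_E\cdot u$, I may assume $C_u = C_v =: C$ as subschemes of $\P E$. Now $u$ and $v$ give two minimal free resolutions of the \emph{same} $O_C$ with identical terms, so by uniqueness of minimal free resolutions there is an isomorphism of complexes over $\mathrm{id}_{O_C}$. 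Its component on $O_{\P E}$ is a global scalar (as $\End O_{\P E} = \C$) forced to be $1$, and its component on $\pi^*F(-2)$ comes from an automorphism of $F$, i.e.\ an element $g_F \in G_F$ with $u = v\circ g_F$. Thus the fiber is a single $G_F \times G_E$-orbit; the opposite containment holds because $G_E$ acts by $\P^1$-automorphisms of $\P E$ and $G_F$ only reparametrizes the two quadrics cutting out a fixed $C$.

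I expect the main obstacle to be the identification of the Koszul complex with the Casnati--Ekedahl resolution: one must verify fiberwise minimality, which relies on the two conics being non-proportional over \emph{every} point of $\P^1$ --- precisely what finiteness of $\pi|_{C_v}$ and the definition of $V^\circ$ guarantee. A secondary technical point is to confirm that the automorphisms produced above descend to $\Aut(\P E/\P^1)$ and $\Aut(\P F/\P^1)$ with no residual scalar ambiguity after passing to $\P_{\sub} V$.
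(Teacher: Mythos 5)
Your proposal is correct and is essentially the paper's own argument: the paper's proof of this proposition is literally the single line ``exactly analogous to the proof of \autoref{thm:3_quotient},'' and your write-up is a faithful elaboration of that analogy --- pushing forward the Koszul resolution to recover the Tschirnhausen sequence for the image, and using functoriality of the Tschirnhausen bundle plus uniqueness of the (fiberwise minimal) Casnati--Ekedahl resolution to produce the $G_E$- and $G_F$-components of the orbit for the fibers. The two technical points you flag (fiberwise minimality, and eliminating scalar ambiguity after passing to $\P_{\sub}V$) are genuine but routine, and are resolved exactly as you suggest --- e.g.\ the lifting of one surjection $\pi^*F(-2) \to \mathcal I_{C}$ through another uses the vanishing of $H^1(\P E, \pi^*(F^\vee \otimes \det E)(-2))$, which holds since $O(-2)$ has no cohomology on the $\P^2$-fibers.
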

\begin{proof}
  The proof is exactly analogous to the proof of \autoref{thm:3_quotient}.
\end{proof}

\begin{proposition}\label{thm:PRC4}
  [Picard rank conjecture for degree four]
  We have $\Pic_{\Q}\H_{4,g} = 0.$
\end{proposition}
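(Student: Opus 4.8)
The plan is to follow the template of the degree three argument in \autoref{thm:PRC3}, now carrying both bundles $E$ and $F$ simultaneously. First I would invoke \autoref{thm:4_quotient}, which exhibits $V^\circ \to U^\dagger_{E^\gen,F^\gen}$ as a quotient by $G := G_E \times G_F$ whose fibers are single orbits. Applying \autoref{thm:pic_quotient}, together with the fact that $V^\circ$ is open in a projective space (so $\rk\Pic_\Q V^\circ \leq 1$), yields
\[ \rk\Pic_\Q U^\dagger_{E^\gen,F^\gen} \leq 1 + \rk\chi(G_E) + \rk\chi(G_F). \]
By \autoref{thm:framed_unframed} the left-hand side equals $\rk\Pic_\Q U$, where I abbreviate $U = U_{E^\gen,F^\gen} \subset \td\H_{4,g}$. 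Since restriction to an open subset of a normal variety is surjective with kernel spanned by the classes of the divisorial components of the complement, I would then bound
\[ \rk\Pic_\Q \td\H_{4,g} \leq \rk\Pic_\Q U + e \leq 1 + \rk\chi(G_E) + \rk\chi(G_F) + e, \]
where $e$ counts the irreducible divisors contained in $\td\H_{4,g}\setminus U$.

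The heart of the argument is evaluating $\rk\chi(G_E)$, $\rk\chi(G_F)$, and $e$. The bundle $E^\gen$ is balanced exactly when $3 \mid g$, and $F^\gen$ is balanced exactly when $g$ is odd; in the balanced case the relevant automorphism group is a $\PGL_n$, contributing no characters, whereas in the two-slope case a direct computation with the block-triangular automorphism group shows the projectivized group has a one-dimensional character lattice. Thus $\rk\chi(G_E) = 0$ if $3\mid g$ and $1$ otherwise, while $\rk\chi(G_F) = 0$ if $g$ is odd and $1$ otherwise. For $e$, the divisorial components of $\td\H_{4,g}\setminus U$ are precisely the Maroni divisor and the Casnati--Ekedahl divisor, which are genuinely distinct (the former is generically balanced in $F$, the latter generically balanced in $E$). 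By \autoref{thm:4_compUe} the Maroni divisor is present exactly when $3\mid g$, and by \autoref{thm:4_compUf} the Casnati--Ekedahl divisor is present exactly when $g$ is odd. The key observation is a conservation phenomenon: each time a bundle balances, a character is lost but a boundary divisor is gained, so the four-term sum is always exactly $3$. Running through the residues modulo six confirms $\rk\Pic_\Q\td\H_{4,g} \leq 3$ in every case.

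Finally, the boundary $\td\H_{4,g}\setminus\H_{4,g}$ is the union $T \cup D \cup \Delta$, whose three classes are linearly independent in $\Pic_\Q\td\H_{4,g}$ by \autoref{thm:lin_indep_boundary}. The excision sequence
\[ \Q^3 \to \Pic_\Q\td\H_{4,g} \to \Pic_\Q\H_{4,g} \to 0 \]
then has injective left map, so $\rk\Pic_\Q\H_{4,g} = \rk\Pic_\Q\td\H_{4,g} - 3 \leq 0$, forcing $\Pic_\Q\H_{4,g} = 0$. I expect the main obstacle to be a bookkeeping point rather than a new idea: confirming that the codimension estimate of \autoref{thm:4_compUf}, established inside the smooth locus $\H_{4,g}$ via the resolvent cubic map, transfers to the complement of $U_{F^\gen}$ in the partial compactification $\td\H_{4,g}$. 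This amounts to checking that the Casnati--Ekedahl locus acquires no new divisorial component along the nodal boundary, which holds because a generic point of each of $T$, $D$, and $\Delta$ carries the generic bundle $F^\gen$, so no boundary divisor is contained in the Casnati--Ekedahl locus.
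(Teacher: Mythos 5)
Your proposal matches the paper's proof essentially step for step: the same quotient chain $V^\circ \to U^\dagger_{E^\gen,F^\gen} \to U_{E^\gen,F^\gen}$ bounded via \autoref{thm:pic_quotient} and \autoref{thm:4_quotient}, the same tally of characters and boundary divisors from \autoref{thm:4_compUe} and \autoref{thm:4_compUf} (the paper simply tabulates the four cases of $g$ modulo $2$ and $3$ rather than phrasing the constancy of $1+\rk\chi(G)+e=3$ as a conservation law), and the same conclusion via \autoref{thm:lin_indep_boundary}. Your closing observation—that the divisor counts, proved in $\H_{4,g}$ via the resolvent cubic, must transfer to $\td\H_{4,g}$ because the generic points of $T$, $D$, and $\Delta$ carry the generic bundles—is a point the paper leaves implicit, and your justification for it is the correct one.
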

\begin{proof}
  Retain the notation introduced above. For brevity, set $U = U_{E^\gen, F^\gen}$ and $U^\dagger = U^\dagger_{E^\gen, F^\gen}$. By \autoref{thm:pic_quotient} and \autoref{thm:4_quotient}, we have
  \begin{align*}
    \rk \Pic_\Q U \leq \rk \Pic_\Q U^\dagger + \rk\chi(\PGL_2) &= \rk \Pic_\Q U^\dagger\\
    &\leq \rk \Pic_\Q V^\circ + \rk\chi(G) \leq 1 + \rk\chi(G).
  \end{align*}
  The final inequality follows because $V^\circ$ is an open subset of
  a projective space. Let $e$ be the number of divisorial components
  of $\td \H_{3,g} \setminus U$. We then get the bound
  \[ \rk \Pic_\Q \td \H_{4,g} \leq \rk \Pic_\Q U + e \leq 1 + \rk\chi(G)
  + e.\] Recall that $G = G_{F^\gen} \times G_{E^\gen}$.

  If $g$ is an odd multiple of $3$, then
  \begin{align*}
    G &= \PGL_{2} \times \PGL_{3} \\
    \rk\chi(G) &= 0\\
    e &= 2 \quad \text{corresponding to $M$ in \autoref{thm:4_compUe} and $CE$ in \autoref{thm:4_compUf}}.
  \end{align*}

  If $g$ is odd, but not divisible by $3$, then
  \begin{align*}
    G &= \PGL_2 \times G_E \\
    G_E &= \left\{\begin{pmatrix}
  a & b & l_{1}\\
  c & d & l_{2}\\
  0 & 0 & e
\end{pmatrix} \mid a, b, c, d, e \in \C, e(ad-bc) \in \C^{*},  l_{i} \in H^0(\P^1, O(1)) \right\} \big{/} \C^*. \\
\rk\chi(G) &= 1 \\
e &= 1 \quad \text{corresponding to $CE$ in \autoref{thm:4_compUf}}.
  \end{align*}

  If $g$ is even and divisible by $3$, then
  \begin{align*}
    G &= G_F \times \PGL_2 \\
    G_F &= \left\{\begin{pmatrix}
        a & l \\
        & b
      \end{pmatrix} \mid a, b \in \C^*, l \in H^0(\P^1, O(1)) \right\} \big{/} \C^*\\
    \rk\chi(G) &= 1\\
    e &= 1 \quad \text{corresponding to $M$ in \autoref{thm:4_compUe}}.
  \end{align*}
  
  If $g$ is even and not divisible by $3$, then
  \begin{align*}
    G &= G_F \times G_E \text{ where $G_F$ and $G_E$ are as in the previous two cases,}\\
    \rk\chi(G) &= 2 \\
    e &= 0.
  \end{align*}
  In all cases, we get
  \[ \rk \Pic_\Q\td \H_{4,g} \leq 3.\]
  By \autoref{thm:lin_indep_boundary}, the classes in $\Pic_\Q \td \H_{4,g}$ of the three components of $\td \H_{4,g} \setminus \H_{4,g}$ are linearly independent. Therefore, we get $\Pic_\Q\H_{4,g} = 0$ as desired.
\end{proof}

\section{Degree five}
Let $C$ be a curve of genus $g$ and $\alpha \from C \to \P^1$ a map of degree five. The relative canonical map embeds $C$ into a $\P^3$ bundle $\P E$ over $\P^1$, where $E$ is a vector bundle of rank four and degree $g+4$. The Casnati-Ekedahl structure theorem provides the following  resolution of $O_C$:
\[ 0 \to \pi^{*} \det E (-5) \to \pi^{*} ( {F}^\vee (\det E))(-3) \to \pi^{*}F(-2) \to O_{\P E} \to O_C \to 0\] 
where $F$ is a vector bundle of rank three and degree $2g+8$. 

Explicitly, we can describe $C \subset \P E$ as follows. The resolution is determined completely by the middle map
\[w \from \pi^{*} ( {F}^\vee (\det E))(-3) \to \pi^{*}F(-2).\] This
map may be viewed an element of the vector space $H^0(\P^1, F \otimes
F \otimes E (- \det E) )$. Due to a theorem of Casnati
\cite{casnati:five}, $w$ can be taken to be anti-symmetric, that is, in the
subspace
 \[ V := H^0(\P^1, \wedge^2F  \otimes E \otimes \det E^\vee ). \]
 
 Even more explicitly, we can describe the defining equations of $C$ as follows. Let 
 \begin{align*}
   F &= O(n_{1}) \oplus \dots \oplus O(n_{5}), \text{ where } n_{1} \leq \dots \leq n_{5}, \text{ and}\\
   E &= O(m_{1}) \oplus \dots \oplus O(m_{4}), \text{ where } m_{1} \leq \dots \leq m_{4}.   
 \end{align*}
 We represent an element $w \in V$ by a skew symmetric matrix of forms 
\begin{equation}\label{eqn:Matrix5}
  M_{w} = \begin{pmatrix}
0 & L_{1,2} & L_{1,3} & L_{1,4} & L_{1,5} \\
-L_{1,2} & 0 & L_{2,3} & L_{2,4} & L_{2,5} \\
-L_{1,3} & -L_{2,3} & 0 & L_{3,4} & L_{3,5}\\
-L_{1,4} & -L_{2,4} & - L_{3,4} & 0 & L_{4,5}\\
-L_{1,5}  & -L_{2,5} & -L_{3,5} & -  L_{4,5}& 0  \end{pmatrix}
\end{equation}
where $L_{i,j} \in H^{0}({\bf P}^1,E\otimes {\det E}^\vee \otimes O(n_i + n_j))$. In $\P E$, the curve $C_{w}$ is cut out by the $4 \times 4$ sub-Pfaffians of the matrix $M_{w}$.

The irreducibility of $C$ puts some restrictions on the possible
matrices. Indeed, suppose
\[L_{1,2} = L_{1,3} = 0.\] Then the Pfaffian $Q_5$ of the submatrix
obtained by eliminating the fifth row and column is \[Q_5 =
L_{1,2}L_{3,4} - L_{1,3}L_{2,4}+L_{2,3}L_{1,4} = L_{2,3}L_{1,4}.\]
Since $Q_5$ is reducible, it forces $C_w$ to be reducible.

Suppose further that $E = O(k)^{r} \oplus
O(k+1)^{4-r}$, where $ 0 \leq r \leq 3$.  Then the observation above
implies that the maximum of the degrees of the summands of $E\otimes
( {\det E}^\vee) \otimes O(n_1+ n_3)$ must be nonnegative, meaning
\[ n_1 +n_3 + k - (g+4) \geq -1.\] Since the $n_{i}$ are increasing,
we get the inequalities
\begin{equation}\label{firstentries}
  n_i+n_j + (k+1) - (g+4) \geq 0 \text{ for every $(i,j)$ with $i \neq j$ except $(i,j) = (1,2)$.}
\end{equation}

Let $E^\gen$ (resp. $F^\gen$) be the most generic vector bundle on $\P^1$ of rank 4 (resp. 5) and degree $g+4$ (resp. $2g+8$). Define $U_{E^\gen}$, $U_{F^\gen}$, and $U_{E^\gen, F^\gen}$ as before. These are the open subsets of $\td \H_{5,g}$ consisting of covers $\alpha$ for which $E_\alpha$, $F_\alpha$, and both $E_\alpha$ and $F_\alpha$ are the most generic.
 
\begin{proposition}\label{thm:5_compUe}
  The subvariety $M := \td \H_{5,g} \setminus U_{E^\gen}$ has codimension
  at least two if $g$ is not divisible by $4$, and has a unique
  divisorial component if $g$ is divisible by $4$.
\end{proposition}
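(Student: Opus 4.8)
The plan is to identify $M$ with a union of Maroni loci and then invoke the degree-five case of \autoref{maronidivisor}, the essential point being that $d = 5$ is prime. By definition $U_{E^\gen}$ is the locus where $E_\alpha \cong E^\gen$, so its complement is $M = \bigcup_{E \neq E^\gen} \td M(E)$, the union running over bundles $E$ of rank four and degree $g+4$ with $E \not\cong E^\gen$. Since $5$ is prime, no cover parametrized by $\td\H_{5,g}$ factors nontrivially, and hence, by statement (5) of \autoref{thm:maroniloci}, the entire Maroni analysis of \autoref{sec:CE} — namely \autoref{thm:maroniloci}, \autoref{maroni:codim}, and \autoref{maronidivisor} — applies verbatim with $M(-)$ replaced by $\td M(-)$ on the nodal partial compactification. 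This transfer is the one step requiring care: the associated scroll construction $C \to \F_m$ stays birational onto its image even for nodal $C$ precisely because $\alpha$ cannot factor, and this is exactly what keeps the dimension estimates valid over $\td\H_{5,g}$.

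Writing $k = \lfloor (g+4)/4 \rfloor$ so that $E^\gen = E[k]$, I would stratify the bundles $E \neq E[k]$ by their minimal summand. If $\lfloor E \rfloor \leq k-1$, then statement (2) of \autoref{thm:maroniloci} gives $\td M(E) \subset \td M(E[k-1])$, and statement (4) computes $\codim \td M(E[k-1]) = g - 4(k-1) + 1$. If instead $\lfloor E \rfloor = k$ but $E \neq E[k]$, then $s := h^0(E^\vee(k))$ strictly exceeds the number $r$ of degree-$k$ summands of $E[k]$, so \autoref{maroni:codim} yields $\codim \td M(E) \geq (s-r) + 1 \geq 2$. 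Consequently every irreducible piece of $M$ other than $\td M(E[k-1])$ has codimension at least two.

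Finally I would read off the behaviour of $\td M(E[k-1])$ from its codimension $g - 4(k-1)+1$. This equals one exactly when $g = 4(k-1)$, that is, when $4 \mid g$; this is precisely \autoref{maronidivisor} in degree five, which moreover identifies $E[k-1] = O(k-1) \oplus O(k)^{\oplus 2} \oplus O(k+1)$ and asserts that $\td M(E[k-1])$ is irreducible. Thus, when $g$ is not divisible by four, $\td M(E[k-1])$ — and hence all of $M$ — has codimension at least two; when $g$ is divisible by four, the bundle $E^\gen$ is balanced so no stratum of the second type occurs, and $\td M(E[k-1])$ is the unique divisorial component of $M$, every other Maroni stratum having codimension at least two. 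The only genuine obstacle is the nodal transfer in the first paragraph; after that the argument is pure bookkeeping with the codimension formula of \autoref{thm:maroniloci}(4) and the bound of \autoref{maroni:codim}, packaging the degree-five specialization of \autoref{maronidivisor} so as to separate the single divisor from the lower-dimensional Maroni strata.
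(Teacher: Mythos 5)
Your proof is correct and is essentially the paper's argument unpacked: the paper disposes of this proposition in one line as ``the degree $5$ case of \autoref{maronidivisor},'' whose proof is exactly your stratification by $\lfloor E \rfloor$ (statement (4) of \autoref{thm:maroniloci} for $\lfloor E \rfloor < k$, \autoref{maroni:codim} for $\lfloor E \rfloor = k$). Your explicit justification of the transfer to $\td\H_{5,g}$ via primality of $5$ and statement (5) of \autoref{thm:maroniloci} makes precise a step the paper leaves implicit, but it is the same route, not a different one.
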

\begin{proof}
  This is the degree $5$ case of \autoref{maronidivisor}.
\end{proof}

For the complement of $U_{F^\gen}$, we must analyze the defining equations of $C$ in $\P E$.
\begin{proposition}\label{thm:5_compUf}
  The subvariety $CE := \H_{5,g} \setminus U_{F^\gen}$ has codimension
  at least two if $g+4$ is not a multiple of $5$ (with the exception
  of $g = 3$ in which case the complement parametrizes hyperelliptic
  curves), and contains a unique divisorial component if $g+4$ is a
  multiple of $5$.
 \end{proposition}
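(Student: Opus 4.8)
The plan is to fix the generic Tschirnhausen bundle $E = E^\gen$ and study the Casnati--Ekedahl loci $M(E^\gen, F)$ through their explicit Pfaffian equations, estimating codimensions as $F$ ranges over the unbalanced bundles of rank five and degree $2g+8$. First I would reduce to this generic-$E$ situation. A divisorial component of $CE$ either meets $U_{E^\gen}$ densely, in which case it is the closure of some $M(E^\gen, F)$ with $F \neq F^\gen$, or it is contained in $\td\H_{5,g}\setminus U_{E^\gen}$. The latter locus is controlled by \autoref{thm:5_compUe}: it has codimension at least two unless $4 \mid g$, and in that case its only divisorial component is the Maroni divisor, for which one checks directly (by constructing a cover with the one-step-unbalanced $E$ and a balanced $F$) that the generic member still has $F_\alpha \cong F^\gen$, so it does not lie in $CE$. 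Thus all divisorial components of $CE$ come from $U_{E^\gen}$.

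Next I would set up the parametrization. As in the proof of \autoref{thm:Mef_irred}, the locus $M^\circ(E^\gen, F)$ is dominated by the open subset $V^\circ$ of $V = H^0(\P^1, \wedge^2 F \otimes E^\gen \otimes {\det E^\gen}^\vee)$ consisting of skew matrices $M_w$ whose sub-Pfaffians cut out a smooth, simply branched, irreducible curve, and two matrices give the same cover exactly when related by $\GL(E^\gen) \times \GL(F)$ acting by $M_w \mapsto A M_w A^{t}$ together with the substitution on the entries. Computing $\chi$ of the bundle $\wedge^2 F \otimes E^\gen \otimes {\det E^\gen}^\vee$ and the dimensions of the two automorphism groups (using $h^0(\End F) = 25 + h^1(\End F)$ and $h^0(\End E^\gen) = 16$), and discarding the one-dimensional subtorus that acts trivially, yields
\begin{equation}
  \codim M(E^\gen, F) = h^1(\End F) - h^1\!\left(\wedge^2 F \otimes E^\gen \otimes {\det E^\gen}^\vee\right).
\end{equation}
The first term is the expected codimension $\dim \Ext^1(F,F)$; the second is the excess. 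The key observation --- and the payoff of working with the explicit skew matrix --- is that the excess is localized to the $(1,2)$ entry. Indeed, the irreducibility constraint \eqref{firstentries} forces $n_i + n_j \geq (g+4) - \lceil E^\gen\rceil$ for every pair $(i,j) \neq (1,2)$, so every summand of the bundle coming from such a pair has degree at least $-1$ and contributes nothing to $h^1$. Hence the excess equals $\sum_{a} \max\{0,\ g+3 - n_1 - n_2 - m_a\}$, a sum of four terms depending only on $n_1 + n_2$ and the splitting of $E^\gen$.

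With this formula the case analysis proceeds as follows. When $g+4$ is a multiple of five, $F^\gen = O(q)^{\oplus 5}$ with $q = (2g+8)/5$, and the unique bundle with $h^1(\End F) = 1$ is $F_\circ = O(q-1)\oplus O(q)^{\oplus 3}\oplus O(q+1)$. For $F_\circ$ one has $n_1 + n_2 = 2q-1$, which makes the excess vanish, so $\codim M(E^\gen, F_\circ) = 1$; since $V^\circ$ is irreducible this component is irreducible, and it is the sought divisor. When $g+4$ is not a multiple of five, $F^\gen$ is unbalanced but still has all summands within one of each other, so $h^1(\End F^\gen) = 0$; a short check shows that any $F$ with $h^1(\End F) = 1$ would force $5 \mid 2g+8$, whence every $F \neq F^\gen$ has $h^1(\End F) \geq 2$. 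To conclude $\codim CE \geq 2$ I would bound the excess against $h^1(\End F)$: using \eqref{firstentries} to bound the differences $n_a - n_1$ and $n_a - n_2$ from below gives $h^1(\End F) - \mathrm{excess} \geq 2\big(g - \lfloor E^\gen\rfloor - (n_1+n_2)\big)$, which already settles every case in which $n_1 + n_2$ is small enough to make the excess positive, save for the handful of boundary values of $n_1 + n_2$ just below the threshold $g + 3 - \lfloor E^\gen\rfloor$.

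The main obstacle is precisely this borderline regime: the finitely many splitting types for which the excess is positive but small (one or two), where the crude linear bound is not decisive and one must verify directly that $h^1(\End F)$ exceeds the excess by at least two. Carrying this out requires a careful but finite enumeration of splitting types subject to \eqref{firstentries}. Two further points need separate treatment. The exceptional genus $g=3$ falls outside the generic pattern because $5 > 2g-2$; although $g+4 = 7$ is not a multiple of five, $CE$ here contains the codimension-one hyperelliptic locus, and I would identify $CE$ directly as the pullback of the hyperelliptic divisor in $\M_3$, using that a hyperelliptic genus-three curve forces a degeneration of $F$. Finally one must confirm the claimed uniqueness and irreducibility of the divisorial component when $5 \mid g+4$, both of which follow from the domination by the irreducible $V^\circ$ together with the codimension formula ruling out any competing $F$.
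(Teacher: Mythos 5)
Your plan mirrors the paper's proof almost step for step: the reduction to $E = E^\gen$ (including the construction of covers with the one-step-unbalanced Maroni bundle and $F_\alpha = F^\gen$, exactly as the paper does via \autoref{maronidivisor}), the Pfaffian parametrization modulo $\GL(F)\times\GL(E^\gen)$ giving $\codim M(E^\gen,F) = \dim\Ext^1(F,F) - h^1(\wedge^2F\otimes E^\gen\otimes\det {E^\gen}^\vee)$, the localization of the excess $h^1$ to the $L_{1,2}$ entry via \eqref{firstentries}, the identification of $F_\circ = O(q-1)\oplus O(q)^{\oplus 3}\oplus O(q+1)$ as the unique divisorial candidate when $5\mid g+4$, and the hyperelliptic exception at $g=3$ are all the same as in the paper.

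The genuine gap is at the decisive numerical step, which you defer rather than prove. Your bound $h^1(\End F)-\mathrm{excess} \geq 2\bigl(g - \lfloor E^\gen\rfloor - (n_1+n_2)\bigr)$ yields $\codim\geq 2$ only for $n_1+n_2 \leq g-k-1$ (writing $k = \lfloor E^\gen\rfloor$), whereas the excess is positive throughout $n_1+n_2 \leq g+2-k$; your leftover window $n_1+n_2 \in \{g-k,\, g-k+1,\, g-k+2\}$ is \emph{not} a finite list of cases, since for each such value and each genus $g$ the splitting types $(n_3,n_4,n_5)$ vary, so the promised ``careful but finite enumeration'' is not available uniformly in $g$ --- and this borderline regime is exactly the crux of the proposition. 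The paper closes it by computing the excess \emph{exactly} rather than bounding it: when positive, $h^1(E^\gen\otimes\det{E^\gen}^\vee\otimes O(n_1+n_2)) = 4g - 4(n_1+n_2+k) + r + 8$ for $E^\gen = O(k)^{\oplus r}\oplus O(k+1)^{\oplus 4-r}$, which combined with $\dim\Ext^1(F,F) \geq 2(n_3+n_4+n_5) - 3(n_1+n_2) - 6$ (valid because \eqref{firstentries} forces $n_2 < n_3$) gives $\codim M(E^\gen,F) \geq (g+4)-(n_1+n_2)-2 > k-1$ on the \emph{entire} positive-excess range. So for $k\geq 2$ all your boundary values are settled at one stroke, and the residual analysis becomes genuinely finite: $k=1$, i.e.\ $0\leq g\leq 4$. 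Relatedly, your proposal flags only $g=3$ among these small-genus cases; the paper must (and does) also treat $g=4$ separately --- a non-hyperelliptic canonical genus-$4$ curve lies on a unique quadric in $\P^3$, forcing $F\cong F^\gen$ away from the codimension-two hyperelliptic locus --- since the uniform bound gives only $\codim \geq 1$ there. To repair your write-up, replace the linear estimate by the paper's exact excess computation (or otherwise supply a uniform-in-$g$ argument for the three boundary values of $n_1+n_2$), and add the $g=4$ (and trivial $g=1,2$) analysis.
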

 \begin{proof}
   We must characterize the Casnati-Ekedahl loci $C(F)$ which are divisorial. We have
   \[C(F) = \bigcup_{E}M(E,F).\] The loci $M(E, F)$ are irreducible by
   virtually the same argument as in \autoref{thm:Mef_irred} (In the
   proof, just take $V = H^0(\P^1, \wedge^2 F \otimes E \otimes \det
   E^\vee)$.) Therefore, any component of $C(F)$ must be of the form
   $M(E,F)$. From the explicit description of degree $5$ covers above,
   it is straightforward to compute that
   \[\codim M(E,F) = \dim \Ext^{1}(E,E) + \dim \Ext^{1}(F,F) -
   h^{1}(\wedge^2F \otimes E \otimes \det E^\vee ).\]

   Suppose $E \neq E^\gen$. Then $M(E, F) \subset M(E)$. By
   \autoref{maronidivisor}, $M(E)$ has codimension at least two unless
   $E = O(k) \oplus O(k+1)^{\oplus d-3} \oplus O(k+2)$. In this case,
   using the explicit description of degree $5$ covers, it is easy to
   construct covers $\alpha$ with $E_\alpha = E$ and $F_\alpha =
   F^\gen$. Thus, $M(E, F) \neq M(E)$, and since $M(E)$ is
   irreducible, $M(E,F) \subset M(E)$ has codimension at least
   one. Therefore, $M(E, F) \subset \H_{4,g}$ has codimension at least
   two.

   Therefore, for $M(E, F)$ to be divisorial, we must have $E = E^\gen$. In
   this case, we have
   \[ \codim M(E,F) = \dim \Ext^1(F,F) - h^1(\wedge^2 F \otimes E \otimes det E^\vee).\]

   Suppose $h^1(\wedge^2 F \otimes E\otimes\det E^\vee) = 0$. Note that $\dim \Ext^1(F,F) = 1$ if and only if 
   \[F = O(n-1) \oplus O(n) \oplus O(n) \oplus O(n) \oplus O(n+1).\]
   In this case $5n = 2(g+4)$, and hence $5$ divides $g+4$. 

   We are thus reduced to showing that $M(E, F)$ is not a divisor when $E = E^\gen$ and  
   \[h^1(\wedge^2 F \otimes E(-\det E)) > 0,\]
   with the exception of $g = 3$. Write 
   \[E = O(k)^{\oplus r} \oplus O(k+1)^{\oplus 4-r} \text{ where } 0 \leq r \leq 3,\]
   and 
   \[F = O(n_1) \oplus O(n_2) \oplus O(n_3) \oplus O(n_4) \oplus
   O(n_5), \text{ where } n_1 \leq \dots \leq n_5.\]

   Consider an anti-symmetric matrix
   \[
   M_{w} = (L_{i,j}) \quad 1 \leq i, j \leq 5,\] as in
   \eqref{eqn:Matrix5}, representing an element of $H^{0}(\wedge^2 F
   \otimes E \otimes \det E^\vee)$. Inequality
   \eqref{firstentries} implies that any contribution to
   $h^1(\wedge^2 F \otimes E \otimes \det E^\vee)$ must come from the
   $L_{1,2}$ entry. In other words, we have
   \[h^1(\wedge^2 F \otimes E(-\det E)) = h^{1}(E\otimes  {\det E}^\vee \otimes O(n_1 + n_2)).\]
   Since $E = E^\gen$, we have $h^{1}(E\otimes {\det E}^\vee \otimes O(n_1 + n_2)) > 0$ if and only if 
   \[n_1 + n_2 + (k+1)-(g+4) < 0.\]
   Hence, we get
   \begin{align*}
     h^{1}(E\otimes  {\det E}^\vee \otimes O(n_1 + n_2)) &= 4(-(n_1 + n_2 + k -(g+4))-1) - (4-r) \\
     &= 4g - 4(n_1+n_2+k) + r + 8.
   \end{align*}
   Equation~\eqref{firstentries} tells us that $n_1 + n_3 + (k+1) -(g+4) \geq 0$, which implies $n_2 < n_3$. Therefore, 
   \[\dim \Ext^{1}(F,F) \geq (2n_5 + 2n_4 + 2n_3) - 3(n_1 + n_2) - 6.\] 
   Combining the two, we get
   \[\dim \Ext^{1}(F,F) - h^{1}(E\otimes  {\det E}^\vee \otimes O(n_1 + n_2)) \geq 2n_5+2n_4 +2n_3 + n_1 + n_2 -3(g+4)-2.\]
   Using $n_1 + \dots + n_5 = 2(g+4)$,  the above inequality becomes
   \[\dim \Ext^{1}(F,F) - h^{1}(E\otimes {\det E}^\vee \otimes O(n_1 + n_2)) \geq (g+4) - (n_1 + n_2) - 2.\]
   Finally, by using the assumption $n_1 + n_2 + (k+1) - (g+4) < 0$, we conclude that 
   \[\codim M(E^\gen, F) = \dim \Ext^{1}(F,F) - h^{1}(E\otimes {\det E}^\vee \otimes O(n_1 + n_2)) > k-1.\]
   If $k > 1$, then we get $\codim M(E^\gen, F) > 1$ as desired. We
   consider the cases where $k = 1$ on an individual basis. These
   cases correspond to $0 \leq g \leq 4$.

   \begin{asparaitem}[\textbf{Case:}]
   \item $g = 4$. Then $E^\gen = O(2)^{\oplus 4}$ and $F^\gen = O(3)^{\oplus 4} \oplus O(4)$. The relative canonical map embeds $C$ in $\P E^\gen \simeq \P^3 \times \P^1$. The projection to $\P^3$ restricts to the canonical map on $C$. Therefore, if $C$ is non-hyperelliptic, then there is only one quadric in $\P^3$ containing the canonical model of $C$.  This means that the bundle $F$ has exactly one $O(4)$ summand, and hence $F \cong F^\gen$. The locus where $C$ is hyperelliptic  is easily seen to be codimension $2$ in $\H_{5,4}$. This exhausts all possibilities in this case.

   \item $g = 3$. Then $E^\gen = O(1) \oplus O(2)^{\oplus 3}$ and $F^\gen = O(2) \oplus O(3)^{\oplus 4}$. Consider the special bundle $F = O(2) \oplus O(2) \oplus O(3)^{\oplus 2} \oplus O(4)$.  Then \[\dim \Ext^1(F,F) - h^1(\wedge^2 F \otimes E \otimes \det E^\vee) = 1.\]
 Now consider a general $[\alpha \from C \to \P^1] \in M(E,F) \subset \H_{5,3}$.  Let $[X : Y :Z :W]$ denote the homogeneous coordinates (locally over $\P^1$) on $\P E$ corresponding to the summands of $E$. As usual, denote by $h$ the class of $O_{\P E}(1)$ and by $f$ the class of the fiber of $\P E \to \P^1$. Since $O(4)$ is a summand of $F$, there exists a unique effective divisor $Q$ of class $2h - 4f$ on $\P E$ which contains $C$. The quadric $Q$ may be written as the zero locus of a form 
 \[c_{0}Y^2 + c_1YZ + \dots +c_5W^2,\] where $c_{i}$ are constants. Let $p \from \P E \dasharrow \P^{2} \times \P^1$  be the projection from the section $[1: 0 : 0 : 0]$, and $g \from \P E \dasharrow \P^{2} \times \P^1 \to \P^{2}$ the composition with the projection onto the first factor.  Then the rational map $g$ is given by the linear system $|h-2f|$ on $\P E$, which restricts to the canonical series on $C$.  However, the fact that $C$ lies on the relative quadric $Q$ means that the image $g(C)$ is exactly the conic defined by the equation for $Q$. Thus, $C$ is hyperelliptic.  
 Given the above geometric understanding of the $O(4)$ summand of $F$, it is easy to show that if we begin with a hyperelliptic curve $C$, and a degree $5$ map $\alpha \from C \to  \P^{1}$, then $F_{\alpha}$ must contain a unique $O(4)$ summand. By the inequalities in \eqref{firstentries}, there are no other choices for $F$.
 \item $g=1,2$. In these cases, we leave it to the reader to see that there are no nontrivial Casnati-Ekedahl or Maroni loci.
   \end{asparaitem}

 \end{proof}
 
 As before, we now exhibit $U_{E^\gen, F^\gen}$ as a quotient. For
 brevity, set $E = E^\gen$ and $F = F^\gen$. Set
 \[ V := H^0(\P^1, \wedge^2 F \otimes E \otimes \det E).\]
An element $v \in \P_{\sub}V$ defines an anti-symmetric matrix as in \eqref{eqn:Matrix5}. Let $C_v$ be the zero locus of the $4 \times 4$ sub-Pfaffians of this matrix. Let $V^\circ \subset \P_{\sub}V$ be the open locus consisting of $v$ for which $C_v$ is irreducible and at worst nodal. Let $G_F := \Aut(\P F/\P^1)$ and $G_E := \Aut(\P E/\P^1$. Then $G := G_F \times G_E$ acts on $V^\circ$. The assignment $v \mapsto [\pi \from C_{v} \to \P^1]$ defines a map 
\[q \from V^\circ \to \td \H_{5,g}^\dagger.\]
Let $U^\dagger_{E, F}$ be the preimage of $U_{E^\gen, F^\gen}$ under $\td \H^\dagger_{5,g} \to \td \H_{5,g}$.
 \begin{proposition}\label{thm:5_quotient}
   The image of $q$ is $U^\dagger_{E^\gen, F^\gen}$. The fibers of $q$ consist of single $G$-orbits.
 \end{proposition}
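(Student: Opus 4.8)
The plan is to follow the template established in the proofs of \autoref{thm:3_quotient} and \autoref{thm:4_quotient}, adapting the two assertions to the Pfaffian presentation of degree five covers. The only genuinely new feature is that the defining data of $C$ is now an antisymmetric matrix rather than a single equation or a pencil of quadrics, so both the surjectivity and the orbit statements hinge on the canonical nature of this matrix, which is the content of Casnati's theorem \cite{casnati:five}.

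For the image, I would first show that every $v \in V^\circ$ lands in $U^\dagger_{E^\gen, F^\gen}$. Given $v$, the curve $C_v \subset \P E$ is cut out by the sub-Pfaffians of $M_v$, and its resolution has exactly the Casnati--Ekedahl shape with ambient bundle $E$ and first syzygy bundle $F$. Truncating this to a resolution of the ideal $I_{C_v}$, twisting by $O_{\P E}(2)$, and applying $\pi_*$ kills the two higher terms (each becomes $\pi^*(-)$ twisted by $O(-1)$ or $O(-3)$, all of whose direct images vanish on the $\P^3$-bundle) and identifies $F_\alpha = \alpha_* I_{C_v}(2)$ with $F$; the identity $E_\alpha = \ker(\alpha_* \omega_\alpha \to O_{\P^1})$ likewise recovers $E$. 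Hence $E_\alpha \cong E^\gen$ and $F_\alpha \cong F^\gen$. For the reverse inclusion, given any $[\alpha \from C \to \P^1]$ in $U^\dagger_{E^\gen, F^\gen}$, the Casnati--Ekedahl structure theorem (\autoref{thm:CE}) embeds $C$ in $\P E$, and by Casnati's antisymmetry theorem the middle map of its resolution may be taken to be an antisymmetric $w \in V$; since $C$ is irreducible and at worst nodal, this $w$ defines a point of $V^\circ$ with $q(w) = [\alpha]$.

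For the fibers, suppose $u, v \in V^\circ$ satisfy $q(u) = q(v)$, so that there is an isomorphism $\phi \from C_u \to C_v$ over the identity of $\P^1$. Because $E_\alpha = \ker(\alpha_*\omega_\alpha \to O_{\P^1})$ and $F_\alpha = \alpha_* I_C(2)$ are functorially attached to the cover, $\phi$ induces isomorphisms $g_E \from E \to E$ and $g_F \from F \to F$, that is, an element $g = (g_F, g_E) \in G$. The automorphism of $\P E$ determined by $g_E$ carries $C_u$ to $C_v$. It then remains to check that $g$ sends $M_u$ to a nonzero scalar multiple of $M_v$, so that $u$ and $v$ lie in a single $G$-orbit in $\P_\sub V$. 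This follows from the uniqueness built into Casnati's theorem: the antisymmetric matrix realizing the minimal resolution of $C \subset \P E$ is determined up to the action of $G_E \times G_F$ and overall scaling.

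The main obstacle is precisely this last uniqueness statement. In degrees three and four the defining data is visibly canonical — a single section up to scalar, or the pencil of quadrics through $C$ — whereas for degree five one must invoke \cite{casnati:five} both to produce the antisymmetric form and to know that it is unique up to $G$ and scaling. Concretely, the work is to verify that the automorphisms $g_E$ and $g_F$ extracted from $\phi$ are compatible with the skew pairing on $\wedge^2 F \otimes E \otimes \det E^\vee$, so that no automorphisms of the resolution arise beyond those coming from $G$; granting this, the orbit statement, and hence the proposition, follows exactly as in the lower-degree cases.
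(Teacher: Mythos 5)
Your proposal is correct and follows essentially the same route as the paper, whose entire proof is the single line that the argument is ``exactly analogous'' to that of \autoref{thm:3_quotient}: surjectivity from the Casnati--Ekedahl resolution (with Casnati's antisymmetry theorem supplying the element of $V$), and the orbit statement from the functoriality of $E_\alpha$ and $F_\alpha$ together with the uniqueness of the Pfaffian presentation up to $G_E \times G_F$ and scaling. You have in fact supplied more detail than the paper does, correctly isolating the one point (uniqueness of the antisymmetric matrix, from \cite{casnati:five}) where the degree five case requires more than the literal degree three argument.
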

  \begin{proof}
    The proof is exactly analogous to that of  \autoref{thm:3_quotient}.
  \end{proof}
 
  \begin{proposition}\label{thm:PRC5}
    [Picard rank conjecture for degree five]
    We have $\Pic_\Q \H_{5,g} = 0$.
  \end{proposition}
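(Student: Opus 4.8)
The plan is to run the same three-step argument that established \autoref{thm:PRC3} and \autoref{thm:PRC4}, now fed by the structural results of this section. By \autoref{thm:5_quotient} the open locus $U := U_{E^\gen, F^\gen} \subset \td\H_{5,g}$ sits in a chain of quotients: writing $U^\dagger$ for its preimage in $\td\H_{5,g}^\dagger$, the Pfaffian parametrization exhibits $U^\dagger$ as a quotient of the open set $V^\circ \subset \P_{\sub}V$ by the connected group $G = G_F \times G_E$, while $U^\dagger \to U$ is a $\PGL_2$-quotient. Applying \autoref{thm:pic_quotient} twice, and using that $\PGL_2$ has no nontrivial characters and that $V^\circ$ is open in a projective space, I would obtain
\[ \rk \Pic_\Q U \le \rk \Pic_\Q U^\dagger \le \rk \Pic_\Q V^\circ + \rk\chi(G) \le 1 + \rk\chi(G). \]

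Next I would pass from $U$ to all of $\td\H_{5,g}$ by accounting for the divisorial part of the complement. Since $\td\H_{5,g}\setminus U = M \cup CE$ with $M$ and $CE$ as in \autoref{thm:5_compUe} and \autoref{thm:5_compUf}, letting $e$ denote the number of divisorial components of this complement gives
\[ \rk \Pic_\Q \td\H_{5,g} \le \rk \Pic_\Q U + e \le 1 + \rk\chi(G) + e, \]
so everything reduces to the claim that $\rk\chi(G) + e \le 2$.

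The heart of the proof, and the step I expect to require the most care (though not the most depth), is a four-way case bookkeeping that forces the character contributions and the divisor contributions to cancel to a constant. The key computation is that for the generic bundle $E^\gen = O(k)^{\oplus r}\oplus O(k+1)^{\oplus 4-r}$ the group $G_E = \Aut(\P E^\gen/\P^1)$ has character rank equal to (number of distinct summand degrees) $-1$, hence $0$ when $E^\gen$ is balanced and $1$ otherwise; the same holds for $G_F$ with the rank-$5$ bundle $F^\gen$. Now $E^\gen$ is balanced exactly when $4 \mid g$, which by \autoref{thm:5_compUe} is exactly when $M$ is a divisor, and $F^\gen$ is balanced exactly when $5 \mid g+4$, which by \autoref{thm:5_compUf} is exactly when $CE$ is a divisor. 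Thus, balanced or not, the pair $(E^\gen, M)$ contributes exactly $1$ to $\rk\chi(G)+e$ (either a character and no divisor, or a divisor and no character), and likewise $(F^\gen, CE)$ contributes exactly $1$. Hence $\rk\chi(G)+e = 2$ in every case and $\rk\Pic_\Q\td\H_{5,g} \le 3$. The residual low-genus values where $k=1$, namely $g = 3, 4$, must be inspected by hand, but \autoref{thm:5_compUf} already records that the complement is non-divisorial there (the exceptional $g=3$ locus parametrizes hyperelliptic covers and has codimension two), so the same count persists.

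Finally I would cut down the boundary. For $d = 5 \ge 4$, \autoref{thm:lin_indep_boundary} shows the three boundary classes $T$, $D$, $\Delta$ of $\td\H_{5,g}\setminus\H_{5,g}$ are linearly independent in $\Pic_\Q\td\H_{5,g}$. The localization sequence presents $\Pic_\Q\H_{5,g}$ as the quotient of $\Pic_\Q\td\H_{5,g}$ by the span of these classes, so
\[ \rk \Pic_\Q \H_{5,g} \le 3 - 3 = 0, \]
whence $\Pic_\Q\H_{5,g} = 0$. The genuine difficulty has been absorbed upstream into \autoref{thm:5_compUf}, whose proof demands the delicate analysis of the skew-symmetric Pfaffian equations; granting that input, the present argument is the same formal manipulation as in degrees three and four.
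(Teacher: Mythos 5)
Your proposal is essentially the paper's proof: the same two applications of \autoref{thm:pic_quotient} giving $\rk \Pic_\Q U \leq 1 + \rk\chi(G)$, the same bound $\rk \Pic_\Q \td\H_{5,g} \leq 1 + \rk\chi(G) + e$, the same four-way congruence table in which balancedness of $E^\gen$ (resp.\ $F^\gen$) trades off exactly against divisoriality of $M$ (resp.\ $CE$) so that $\rk\chi(G) + e = 2$, and the same final cut by the three independent boundary classes of \autoref{thm:lin_indep_boundary}. Your observation that each pair $(E^\gen, M)$ and $(F^\gen, CE)$ contributes exactly $1$ to $\rk\chi(G)+e$ is precisely the content of the paper's table, just stated more conceptually.

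One caveat: your parenthetical about $g=3$ misquotes \autoref{thm:5_compUf}. That proposition flags $g=3$ as an \emph{exception} to the codimension-two statement, and its proof bears this out: for $F = O(2)^{\oplus 2} \oplus O(3)^{\oplus 2} \oplus O(4)$ one computes $\dim \Ext^1(F,F) - h^1(\wedge^2 F \otimes E \otimes \det E^\vee) = 1$, so the hyperelliptic locus is a \emph{divisor} in $\H_{5,3}$, not codimension two (a direct count confirms this: the hyperelliptic locus has dimension $5+3+2 = 10$ inside the $11$-dimensional $\H_{5,3}$; the codimension-two assertion in the paper concerns $g=4$). With that correction, the $g=3$ case has $\rk\chi(G) = 2$ and $e = 1$, so the bound degrades to $\rk \Pic_\Q \td\H_{5,3} \leq 4$, and the final step only yields $\rk \Pic_\Q \H_{5,3} \leq 1$. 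To be fair, the paper's own summary table silently files $g=3$ under the generic row ``$4\nmid g$, $5\nmid g+4$'' with $e=0$, in tension with its own \autoref{thm:5_compUf}, so your bottom line coincides with the published argument; but as written your justification asserts the opposite of what the cited proposition says, and a complete treatment of $g=3$ would require an additional argument (for instance, expressing the class of the hyperelliptic divisor in terms of the tautological or boundary classes) rather than the claim that the locus is non-divisorial.
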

    \begin{proof}
    The proof is entirely analogous to the proof of
    \autoref{thm:PRC4}. We indicate only the major steps. Set $U =
    U_{E^\gen, F^\gen}$, and $U^\dagger =
    U^\dagger_{E^\gen,F^\gen}$. Applying \autoref{thm:pic_quotient} to
    $V^\circ \to U^\dagger$ and $U^\dagger \to U$, we get
    \[ \rk \Pic_\Q U \leq 1 + \rk\chi(G).\] Let $e$ be the number of
    divisorial components of $\td \H_{5,g} \setminus U$. We then get
    \[ \rk \Pic_\Q \td \H_{5,g} \leq 1 + \rk\chi(G) + e.\] Both $G$ and $e$ depend
    on $g$ modulo 4 and 5. Using \autoref{thm:5_compUe} and
    \autoref{thm:5_compUf}, we get the following possibilities.
    \[
    \begin{tabular}{c | c l}
      & $\qquad \rk\chi(G) = \rk\chi(G_E) + \rk\chi(G_F) \qquad$ & $e$ \\
      \hline
      $4 \mid g$, $5 \mid g+4$ & $0 = 0 + 0$ & 2 ($M$ and $CE$)\\
      $4 \mid g$, $5 \nmid g+4$& $1 = 0 + 1$ & 1 ($M$)\\
      $4 \nmid g$, $5 \mid g+4$& $1 = 1 + 0$ & 1 ($CE$)\\
      $4 \nmid g$, $5 \nmid g+4$& $2 = 1 + 1$& 0. 
    \end{tabular}
    \]
    In all the cases, we have $\Pic_\Q\td \H_{5,g} \leq 3$. With
    \autoref{thm:lin_indep_boundary}, this gives $\Pic_\Q \H_{5,g} =
    0$.
 \end{proof}

\section{From Hurwitz spaces to Severi varieties}
The associated scroll construction in \autoref{sec:ass_scroll} lets us relate the Picard rank of a Hurwitz space to the Picard rank of a Severi variety. In this section, we work out this relation.

Recall the notation $\U_g(\F_m, d\tau)$, $\V_g(\F_m, d\tau)$, and $\V^\irr_g(\F_m, d\tau)$ from \autoref{sec:notation}. When confusion is unlikely, we abbreviate them by $\U$, $\V$, and $\V^\irr$. Following Diaz and Harris \cite{harrisdiaz:geom_severi}, we enlarge $\U$ by including the irreducible curves of geometric genus $g$ having a cusp, a tacnode, a triple point, and irreducible nodal curves of geometric genus $(g-1)$ (that is, curves having an ``additional'' node). Denote by $\td \U$ the normalization of this partial compactification. The local analysis from \cite[\S~1]{harrisdiaz:geom_severi} of the Severi variety at points corresponding to cusps, tacnodes, triple points, and an additional node shows that $\td \U$ is smooth. Since $\td \U$ maps to the linear series $|d\tau|$, it carries over it a family of (singular) curves. The normalization of the total space of this family gives a family $\mathcal C \to \td \U$ of curves of arithmetic $g$. A generic fiber of $\mathcal C \to \td \U$ is the normalization the corresponding curve on $\F_m$.

Using the universal family, we can construct tautological divisor classes on $\td \U$ as follows. Consider the diagram 
\[
\begin{tikzpicture}[node distance=1.5cm]
  \node (C) {$\mathcal C$};
  \node [right of =C] (F) {$\F_m$};
  \node [below of = C] (U) {$\td{\U}$};
  \draw [->] 
  (C) edge node [left] {$\rho$} (U) 
  (C) edge node [above] {$\nu$} (F);
\end{tikzpicture}.
\]
Define five tautological divisor classes on $\td \U$ (The subscript $s$ stands for ``Severi''):
\begin{enumerate}
\item $\lambda_{s} := c_1(\rho_{*} \omega_\rho)$
\item $\kappa_{s} := \rho_{*}(c_1(\omega_{\rho})^2)$
\item $\xi_{s} := \rho_{*}(\nu^{*}(f) \cdot c_1(\omega_{\rho}))$
\item $\theta_{s} := \rho_{*}(\nu^{*}(\sigma) \cdot c_1(\omega_{\rho}))$
\item $\psi_{s} := \rho_{*}(\nu^{*}\text{[Point]})$
\end{enumerate}
Since the irreducible curves in the linear system $|d\tau|$ avoid the directrix $\sigma$, we get $\theta_s = \psi_s = 0$. Therefore, a natural conjecture is the following.
\begin{conjecture}\label{conjectureS} 
  The rational Picard group of $\td{\U}$ is tautological, that is 
  \[\Pic_{\Q}\td{\U} = \Q \langle  \lambda_{s}, \kappa_{s}, \xi_{s} \rangle.\]
\end{conjecture}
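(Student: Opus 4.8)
The plan is to derive the statement from the Picard rank conjecture for $\H_{d,g}$ (established for $d \leq 5$ in \autoref{thm:PRC3}, \autoref{thm:PRC4}, and \autoref{thm:PRC5}) by transporting it across the associated scroll construction and then analyzing the boundary $\td\U \setminus \U$. The governing heuristic is that $\td\U$ is the Severi-variety incarnation of the partially compactified Hurwitz space $\td\H_{d,g}$: under the scroll construction the tautological classes $\lambda_s$, $\kappa_s$, $\xi_s$ correspond to the tautological classes $\rho_*(c_1(\omega_\rho)^2)$, $\rho_*(c_1(\omega_\rho)\,\alpha^*c_1(\omega_\pi))$, and $\rho_*([\delta_\rho])$ on $\td\H_{d,g}$, while the boundary divisors of $\td\U$ correspond to the boundary divisors of $\td\H_{d,g}$. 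Thus the conjecture is the Severi-side mirror of the assertion that $\Pic_\Q\td\H_{d,g}$ is generated by its tautological classes, which is precisely the Picard rank conjecture.

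First I would establish that the open stratum carries no rational divisor classes, that is $\Pic_\Q\U = 0$. This is exactly the implication supplied by \autoref{thm:B}: once $m \geq \lceil 2(g+d-1)/(d-1) \rceil$ we have $\Pic_\Q\U_g(\F_m,d\tau) = 0$ if and only if $\Pic_\Q\H_{d,g} = 0$, and the latter holds for $d \leq 5$. With $\Pic_\Q\U = 0$ in hand, the excision sequence shows that the classes of the boundary divisors of $\td\U \setminus \U$ generate $\Pic_\Q\td\U$. By the local analysis of Diaz and Harris \cite{harrisdiaz:geom_severi}, these boundary divisors are four in number: the cusp locus, the tacnode locus, the triple-point locus, and the locus of curves acquiring an extra node (geometric genus $g-1$). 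In particular $\rk\Pic_\Q\td\U \leq 4$, and every tautological class is a priori a rational combination of these four boundary divisors.

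Next I would set up the two-way dictionary between the boundary divisors and the tautological generators. One inclusion is immediate: since $\Pic_\Q\U = 0$, the restrictions of $\lambda_s$, $\kappa_s$, $\xi_s$ to $\U$ vanish, so each is supported on the boundary and hence lies in the span of the four boundary divisors. For the reverse inclusion I would produce relations expressing each of the four boundary divisors in terms of $\lambda_s$, $\kappa_s$, $\xi_s$. The two engines are Grothendieck--Riemann--Roch for $\rho\from \mathcal C \to \td\U$, giving a Mumford-type identity that ties the extra-node divisor to $\lambda_s$ and $\kappa_s$, and the projection formula for $\nu\from \mathcal C \to \F_m$, which expresses the cusp, tacnode, and triple-point divisors through $\xi_s$ and records the vanishing $\theta_s = \psi_s = 0$ forced by the curves avoiding the directrix. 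These are the $\F_m$-analogues of the relations of Diaz and Harris \cite{harrisdiaz:geom_severi}; collectively they collapse the four-dimensional boundary span onto the three-dimensional tautological span, which is exactly the numerology (five tautological classes, two of which vanish) that makes the conjecture plausible.

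Finally I would check that $\lambda_s$, $\kappa_s$, $\xi_s$ are linearly independent, which upgrades the spanning statement to the claimed equality. This is carried out exactly as in \autoref{thm:lin_indep_boundary}: I would construct test families of curves in $\F_m$ — for instance by pushing the hyperelliptic and trigonal test pencils used there through the scroll construction — and verify that the intersection matrix against $\lambda_s$, $\kappa_s$, $\xi_s$ is nonsingular. The main obstacle is twofold. The fundamental one is that the input $\Pic_\Q\U = 0$ rests on the Picard rank conjecture for $\H_{d,g}$, which is known only for $d \leq 5$; for $d \geq 6$ the statement remains genuinely conjectural, in lockstep with the open Hurwitz case. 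The more technical obstacle, present already for $d \leq 5$, is the boundary bookkeeping: one must confirm that the cusp, tacnode, triple-point, and extra-node loci exhaust the divisorial boundary of $\td\U$ and extract the relations among them precisely, since any miscount would alter the predicted Picard rank.
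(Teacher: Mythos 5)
You have set out to prove a statement that the paper itself does not prove: \autoref{conjectureS} is genuinely a conjecture there, and the paper's only results about it are \autoref{rem:equivalenceS} (equivalence with $\Pic_\Q\U = 0$) and the comparison theorem \autoref{SvsH}. Your proposal, as you yourself concede in the closing paragraph, establishes at most the special case $d \leq 5$ and $m \geq \lceil 2(g+d-1)/(d-1) \rceil$: you concatenate \autoref{SvsH} with the Picard rank theorems \autoref{thm:PRC3}, \autoref{thm:PRC4}, \autoref{thm:PRC5}, and then re-derive \autoref{rem:equivalenceS} by excision plus the dictionary between the four boundary divisors $CU$, $TN$, $TP$, $\Delta$ and the tautological classes. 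That chain is logically sound and not circular (the degree $\leq 5$ theorems nowhere use Conjecture S), and it is indeed the combination the authors intend the reader to notice; but it is not a proof of the statement as posed, which quantifies over all $d$, $g$, and all admissible $m$. Concretely: for $d \geq 6$ your step 1 has no input at all, since $\Pic_\Q \H_{d,g} = 0$ is open; and even for $d \leq 5$, in the range $\lfloor (g+d-1)/(d-1)\rfloor \leq m < 2(g+d-1)/(d-1)$ the theorem you invoke supplies only the one-way implication (Conjecture S implies Conjecture H), which points the wrong way for your purposes. The obstruction to the converse in that range is visible in the proof of \autoref{SvsH}: the open locus $Z \subset \td\U$ where $h^0(\mathcal C_u, \phi^*O(m))$ is minimal is only guaranteed to be all of $\td\U$ when $m$ is large, via \autoref{thm:constraintsE}; for smaller $m$ its complement can contain divisors not controlled by the Hurwitz side.

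Two smaller points. First, your final step (linear independence of $\lambda_s$, $\kappa_s$, $\xi_s$ via test curves) is not required by the statement: $\Q\langle \lambda_s, \kappa_s, \xi_s\rangle$ denotes the span, so the content of the conjecture is generation, not rank exactly three; the independence is true but extraneous. Second, your middle step --- expressing $CU$, $TN$, $TP$, $\Delta$ as rational combinations of the tautological classes and vice versa --- is precisely \autoref{rem:equivalenceS}, which the paper asserts with ``it is not hard to check''; your plan to obtain these relations from Grothendieck--Riemann--Roch for $\rho$ and the Diaz--Harris-type local computations on $\F_m$ is the right way to fill that in, and the vanishing $\theta_s = \psi_s = 0$ you cite is exactly the mechanism (five natural classes, two forced to vanish) behind the three-generator numerology. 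So the verdict is: a correct derivation of the known special cases, correctly flagged as such, but with a genuine and unavoidable gap as a proof of \autoref{conjectureS} itself --- the general statement remains open in lockstep with the Picard rank conjecture for $d \geq 6$, and additionally, for small $m$, your argument yields nothing even in low degree.
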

Denote by $CU$, $TN$, $TP$, and $\Delta$ the closures in $\V^\irr$ of the locus curves with a cusp, tacnode, triple point, or an additional node, respectively. Abusing notation, denote their preimages in $\td \U$ by the same letters.

\begin{remark}\label{rem:equivalenceS}
  It is not hard to check that the classes in $\Pic_\Q\td \U$ of $CU$, $TN$, $TP$, and $\Delta$ can be expressed as $\Q$-linear combinations of $\lambda_s$, $\kappa_s$, and $\xi_s$ and vice versa. \autoref{conjectureS} is therefore equivalent to
  \[ \Pic_\Q \U = 0.\]
\end{remark}

\begin{proposition}\label{sev:divisors}
  The only divisorial components of $\V^\irr \setminus  \U$ are $CU$, $TN$, $TP$, and $\Delta$.
\end{proposition}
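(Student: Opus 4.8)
The plan is to take a general point of a putative divisorial component, show its singularity type is constant, and read off the possibilities through a dimension count that combines Tyomkin's theorem (\autoref{tyomkin:irred}) with the local analysis of \cite{harrisdiaz:geom_severi}. So first I would let $W \subset \V^\irr \setminus \U$ be a component of codimension one and pick a general point $[\overline C] \in W$. By constructibility, along a dense open subset of $W$ the curves share a fixed collection of singular points $p_1,\dots,p_k$ of fixed topological type, so $W$ is the closure of an equisingular stratum through $\overline C$. Let $\nu\from C \to \F_m$ be the normalization, $g' = p_g(\overline C) \leq g$ its geometric genus (geometric genus can only drop in the limit), $\delta_{p_i}$ the $\delta$-invariant of $p_i$, and $c(p_i)$ the codimension in $|d\tau|$ of the locus of curves acquiring a singularity of the topological type of $p_i$ at a variable point.

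The key is a codimension formula. Since $\U = \U_g(\F_m,d\tau)$ has the expected dimension $\dim|d\tau| - \delta$ with $\delta = p_a - g$ (\autoref{tyomkin:irred}), and likewise the equigeneric locus $\V_{g'}(\F_m,d\tau)$ of curves of geometric genus $g'$ has dimension $\dim \U - (g-g')$ with $(\delta + g - g')$-nodal general member, the equisingular stratum of $\overline C$ lies inside this equigeneric locus. Writing $\epsilon(p) := c(p) - \delta_p$ for the \emph{excess} of a singularity, the independence of the conditions imposed at distinct general points (the content of the local computations of \cite{harrisdiaz:geom_severi}) yields
\[
  \codim_{\U} W \;=\; (g-g') \;+\; \sum_{i=1}^{k}\epsilon(p_i).
\]
Each term on the right is a nonnegative integer: $g-g'\ge 0$, and $\epsilon(p)\ge 0$ with equality exactly when $p$ is a node, since a node is the least degenerate singularity and imposes precisely $\delta_p = 1$ conditions.

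I would then classify the configurations making the left-hand side equal to $1$. As all terms are nonnegative integers summing to $1$, either (i) $g' = g-1$ and every $p_i$ is a node, giving the locus $\Delta$ of one additional node; or (ii) $g' = g$ and exactly one $p_i$ has $\epsilon(p_i)=1$ with the rest nodes. Case (ii) reduces to the purely local statement that the only reduced plane-curve singularities with $\epsilon(p)=1$, i.e.\ $c(p)=\delta_p+1$, are the cusp $A_2$ (where $\delta=1$, $c=2$), the tacnode $A_3$ (where $\delta=2$, $c=3$), and the ordinary triple point (where $\delta=3$, $c=4$), corresponding to $CU$, $TN$, and $TP$. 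Finally I would check that each of the four strata is genuinely nonempty of codimension one in $\V^\irr$: smoothing the extra node, the cusp, the tacnode, or the triple point deforms such a curve to a general genus-$g$ nodal curve, so it lies in $\overline{\U}=\V$ and hence in $\V^\irr$, while existence follows from the general-position constructions underlying \autoref{tyomkin:irred}.

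The main obstacle is the local classification in case (ii): proving $\epsilon(p)\ge 0$ with equality only for nodes, and enumerating the singularities with $\epsilon(p)=1$. This requires computing, for an arbitrary singularity via its multiplicity sequence (equivalently its resolution), both $\delta_p = \sum_q\binom{m_q}{2}$ and the equisingular codimension $c(p)$, and verifying that exceeding a node by a single unit of codimension forces one of the three listed types and no others (e.g.\ ruling out $A_4$, higher ordinary multiple points, and all $D_k,E_k$ with $k\ge 5$, all of which have excess at least $2$). A secondary technical point is justifying the \emph{exact} codimension formula, rather than a mere inequality, which is where the expected-dimension statement of \autoref{tyomkin:irred} and the independence of conditions from \cite{harrisdiaz:geom_severi} are the essential inputs.
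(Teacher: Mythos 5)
Your strategy is essentially the one the paper itself invokes: its proof of \autoref{sev:divisors} is a direct appeal to the argument for Theorem~1.4 of \cite{diaz_harris_ideals}, whose skeleton---stratify the boundary by equisingularity type, write the codimension of a stratum as a genus-drop term plus a sum of local excesses, and observe that total excess one forces an additional node, a cusp, a tacnode, or an ordinary triple point---is exactly what you have reproduced. Your classification in case (ii) is correct and is standard from that source.

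There is, however, a genuine gap at the point you set aside as a ``secondary technical point,'' namely the justification of the codimension formula, and more precisely of the \emph{lower} bound $\codim W \geq (g-g') + \sum_i \epsilon(p_i)$, which is the direction actually needed to rule out unexpected divisorial strata. This bound amounts to an upper bound on the dimension of the equigeneric and equisingular loci at an \emph{arbitrary} reduced irreducible curve, and neither of your cited inputs supplies it here: \autoref{tyomkin:irred} concerns only nodal curves, so it controls the term $g-g'$ along nodal equigeneric loci but says nothing about tangent spaces at non-nodal curves; and the relevant computations of Diaz and Harris are not purely local---their key step is the \emph{global} statement that the conductor (adjoint) ideal of the curve imposes independent conditions on the ambient linear system, which they prove for plane curves using adjoint systems on $\P^2$. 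That statement does not transfer formally to $\F_m$, and it is precisely the one new ingredient the paper must prove, namely \autoref{sev:adjointconditions}: for any reduced irreducible $D \in |d\tau|$, the conductor ideal imposes independent conditions on $H^{0}(\F_m, O(d\tau))$. The paper's proof of this lemma is genuinely global and uses the special geometry of the Hirzebruch surface: the anticanonical class $-K$ is effective with fixed component the directrix $\sigma$, which irreducible members of $|d\tau|$ avoid, so one passes from the classical independence on the adjoint system $|K+D|$ to independence on $|D|$ by multiplying by a general section of $O(-K)$. Without this lemma your dimension count is a plausible heuristic rather than a proof; with it, your argument closes and coincides with the paper's.
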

\begin{proof}
  It suffices to show that the codimension one components of $\V \setminus \U$ are the loci of curves with cusps, tacnodes, triple points or an additional node. This follows by the same proof as for Theorem~1.4 in \cite{diaz_harris_ideals}. The critical ingredient of the argument is provided by \autoref{sev:adjointconditions}.
\end{proof}
\begin{lemma}\label{sev:adjointconditions}
  Let $D \in |d\tau|$ be a reduced irreducible curve on the Hirzebruch surface $\F_{m}$. Denote by $A$ the conductor ideal of the singularities of $D$. Then $A$ imposes independent conditions on $H^{0}(\F_{m}, O(d\tau))$.
\end{lemma}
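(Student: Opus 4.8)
The plan is to convert the statement into a single cohomology vanishing and then extract that vanishing from the duality between the conductor and the dualizing sheaf. Write $S = \F_m$, $D = d\tau$, and $L = O_S(D) = O(d\tau)$, and regard the conductor $A$ as an ideal sheaf on $S$ with $O_S(-D) \subset A \subset O_S$, so that $O_S/A$ is a finite-length sheaf supported at the singular points of $D$. To say that $A$ imposes independent conditions on $H^0(S,L)$ is precisely to say that the restriction $H^0(S,L) \to H^0(S, L \otimes O_S/A)$ is surjective. Tensoring $0 \to A \to O_S \to O_S/A \to 0$ by $L$, this surjectivity is implied by $H^1(S, A \otimes L) = 0$, so the whole lemma reduces to proving
\[ H^1(\F_m, A \otimes O(d\tau)) = 0. \]

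To reach this I would twist the conductor sequence $0 \to O_S(-D) \to A \to \mathfrak c \to 0$, where $\mathfrak c = A/O_S(-D)$ is the conductor viewed as an $O_D$-ideal, by $L = O_S(D)$ to obtain
\[ 0 \to O_S \to A \otimes L \to \mathfrak c \otimes O_D(D) \to 0. \]
Since $\F_m$ is rational we have $H^1(S, O_S) = 0$, so it suffices to show $H^1(S, \mathfrak c \otimes O_D(D)) = 0$. The key step is to identify this sheaf with a pushforward from the normalization. Let $\nu \from \td D \to D$ be the normalization, with $\td D$ smooth and irreducible. As $D$ is a Cartier divisor on the smooth surface $S$ it is Gorenstein, and Grothendieck duality for the finite map $\nu$ gives $\nu_* \omega_{\td D} \cong \mathcal{H}om_{O_D}(\nu_* O_{\td D}, \omega_D) = \mathfrak c \otimes \omega_D$, the last equality being the standard characterization of the conductor. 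Combining with adjunction $\omega_D = \omega_S \otimes O_D(D)$ and the projection formula yields
\[ \mathfrak c \otimes O_D(D) \cong \nu_*\!\left(\omega_{\td D} \otimes \nu^* \omega_S^\vee\right). \]
Hence $H^1(S, \mathfrak c \otimes O_D(D)) = H^1(\td D, \omega_{\td D} \otimes \nu^*\omega_S^\vee)$, which by Serre duality on $\td D$ is dual to $H^0(\td D, \nu^*\omega_S)$.

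It then remains to compute a degree. Since $\nu$ is birational, $\deg_{\td D} \nu^* \omega_S = K_S \cdot D = d\,(\tau \cdot K_{\F_m})$, and adjunction on the section $\tau$ (of genus $0$ with $\tau^2 = m$) gives $\tau \cdot K_{\F_m} = -(m+2)$, so the degree is $-d(m+2) < 0$. As $\td D$ is smooth and irreducible, a line bundle of negative degree has no global sections, so $H^0(\td D, \nu^*\omega_S) = 0$. This forces $H^1(S, \mathfrak c \otimes O_D(D)) = 0$, and then the twisted conductor sequence gives $H^1(S, A \otimes O(d\tau)) = 0$, completing the argument.

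I expect the duality identification $\mathfrak c \otimes \omega_D \cong \nu_* \omega_{\td D}$ to be the crux, since it is what converts the geometrically opaque conductor into a line bundle on the smooth curve $\td D$ whose cohomology is governed purely by a degree count; one should check carefully that $D$ being Cartier (hence Gorenstein) is exactly what licenses this step for arbitrary singularities, not just nodes. The remaining ingredients — the reduction to vanishing, the rationality vanishing $H^1(O_{\F_m}) = 0$, and the negativity of $\nu^*\omega_S$ — are routine.
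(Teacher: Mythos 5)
Your proof is correct, but it takes a genuinely different route from the paper's. The paper quotes the classical theorem that the conductor imposes independent conditions on the adjoint series $|K+D|$, and then transfers this from $K+D$ to $D$ by multiplying by a general section of $-K_{\F_m}$: this works because $-K_{\F_m}$ is effective with fixed component the directrix $\sigma$, the moving part separates points away from $\sigma$, and the singular scheme $Z = V(A)$ is disjoint from $\sigma$ (an irreducible $D \in |d\tau|$ avoids the directrix since $d\tau \cdot \sigma = 0$). You instead prove the sharper vanishing $H^1(\F_m, A \otimes O(d\tau)) = 0$ directly: your reduction to this vanishing is the correct reformulation of independent conditions, the identification $\mathfrak c \otimes \omega_D \cong \nu_* \omega_{\td D}$ via duality for the finite map $\nu$ (licensed, as you note, by $D$ being Gorenstein so that $\omega_D$ is invertible) is the standard one, and the endgame $H^1(\td D, \omega_{\td D}\otimes \nu^*\omega_S^\vee)^\vee \cong H^0(\td D, \nu^*\omega_S) = 0$ from $K_{\F_m}\cdot D = -d(m+2) < 0$ is right ($\tau \cdot K_{\F_m} = -(m+2)$ by adjunction, as you say). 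In effect you reprove the classical adjoint statement rather than cite it, and the extra negativity in the twist $O(d\tau)$ (as opposed to $O(K+D)$, where the analogous terminal group is $H^1(\omega_{\td D}) \neq 0$ and more care is needed) is exactly what makes your computation close without subtlety. What each approach buys: the paper's is shorter given the classical input but leans on the specific anticanonical geometry of $\F_m$ and on $Z \cap \sigma = \emptyset$; yours is self-contained and visibly more general --- it applies verbatim to any reduced irreducible curve $D$ on a smooth projective surface $S$ with $H^1(O_S)=0$ and $K_S \cdot D < 0$, with no hypothesis on where the singularities sit.
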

\begin{proof}
  Let $K = K_{\F_m}$ be the canonical class. The anti-canonical class $-K$ is effective. Furthermore, the fixed component of $-K$ is the directrix $\sigma$, and $-K$ separates points away from $\sigma$. 

  It is a classical that $A$ imposes independent conditions on the adjoint linear system $|K+D|$. Let $Z = V(A)$ be the zero dimensional scheme defined by the ideal sheaf $A$. Then the restriction map \[H^{0}(O(K+D)) \to H^{0}(O_{Z}(K+D))\] is surjective.  Therefore, we can conclude the same for \[H^{0}(O(D)) \to H^{0}(O_{Z}(D))\] by multiplying the previous restriction map by a general section of $O(-K)$.
\end{proof}

We now rephrase the Picard rank conjecture for Hurwitz spaces in a manner similar to \autoref{conjectureS}. Consider the diagram
\[
\begin{tikzpicture}[node distance=1.5cm]
  \node (C) {$\mathcal C$};
  \node [right of =C] (P) {$\P^1$};
  \node [below of = C] (H) {$\td \H^\dagger_{d,g}$};
  \draw [->] 
  (C) edge node [left] {$f$} (H) 
  (C) edge node [above] {$\alpha$} (P);
\end{tikzpicture}.
\]
Define the following tautological divisor classes on $\td \H_{d,g}^\dagger$ (The subscript ``h'' stands for ``Hurwitz''):
\begin{enumerate}
\item $\lambda_{h} := c_1(f_{*} \omega_f)$
\item $\kappa_{h} := f_{*}(c_1(\omega_{f})^2)$
\item $\xi_{h} := f_{*}(\alpha^{*}\text{[Point]} \cdot c_1(\omega_{f}))$
\end{enumerate}

\begin{conjecture}\label{conjectureH} 
  The rational Picard group of $\td{\H}^\dagger_{d,g}$ is tautological, that is,
  $\Pic_{\Q}\td{\H}^\dagger_{d,g} = \Q \langle \lambda_{h},
  \kappa_{h}, \xi_{h} \rangle.$
\end{conjecture}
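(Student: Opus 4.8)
The plan is to prove that \autoref{conjectureH} is \emph{equivalent} to the Picard rank conjecture $\Pic_\Q\H_{d,g}=0$, in exact parallel with \autoref{rem:equivalenceS} on the Severi side. Together with \autoref{thm:prc345} this establishes the conjecture for $d\le 5$ and reduces it to the PRC in general. I work throughout on the framed space $\td\H^\dagger_{d,g}$, where the tautological classes are defined, and treat $d\ge 4$, so that there are three boundary divisors $T$, $D$, $\Delta$; the case $d=3$ is handled with the obvious modification, a single tautological relation appearing.

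First I would determine $\Pic_\Q\td\H^\dagger_{d,g}$. Applying \autoref{thm:framed_unframed} with $U=\H_{d,g}$ and invoking \autoref{thm:prc345} gives $\Pic_\Q\H^\dagger_{d,g}=0$. The complement of $\H^\dagger_{d,g}$ in $\td\H^\dagger_{d,g}$ is the union of the preimages of $T$, $D$, and $\Delta$, so the excision sequence
\[ \Q\langle [T],[D],[\Delta]\rangle \to \Pic_\Q\td\H^\dagger_{d,g} \to \Pic_\Q\H^\dagger_{d,g}\to 0 \]
shows that $\Pic_\Q\td\H^\dagger_{d,g}$ is spanned by these three boundary classes. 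By \autoref{thm:lin_indep_boundary}, transported to the framed space via the rank equality of \autoref{thm:framed_unframed}, the classes $[T]$, $[D]$, $[\Delta]$ are linearly independent, so $\Pic_\Q\td\H^\dagger_{d,g}\cong\Q^3$.

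It remains to show that $\lambda_h$, $\kappa_h$, $\xi_h$ span this rank-three group, for which it suffices to prove they are $\Q$-linearly independent. I would test them against (lifts to $\td\H^\dagger_{d,g}$ of) the curves $B_1$, $B_2$, $B_3$ constructed in the proof of \autoref{thm:lin_indep_boundary}: on each $B_i$ there is an explicit family of covers, and I would compute $\deg_{B_i}\lambda_h$ by Grothendieck--Riemann--Roch applied to $f$, and $\deg_{B_i}\kappa_h$, $\deg_{B_i}\xi_h$ by adjunction and intersection theory on the total space of the family. Nonsingularity of the resulting $3\times 3$ matrix of intersection numbers forces any relation $a\lambda_h+b\kappa_h+c\xi_h=0$ to be trivial. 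As both a consistency check and a partial shortcut, Mumford's relation $12\lambda_h=\kappa_h+c\,[\Delta]$ with $c>0$, coming from GRR, already exhibits $[\Delta]$ in the tautological span and reduces the problem to separating $[T]$ and $[D]$.

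The main obstacle is the intersection computation on $B_1$, $B_2$, $B_3$. These test families have \emph{reducible} nodal fibers---a varying cover $X_b$ glued to a fixed cover $E$---so the universal curve and its relative dualizing sheaf must be set up with care, tracking the contributions of the gluing sections and of the fixed component $E$ exactly as in the boundary computation of \autoref{thm:lin_indep_boundary}. Once the matrix entries are in hand, nonsingularity is a finite verification. For $d\le 5$ I expect nothing beyond this bookkeeping; the only genuinely open ingredient in general is \autoref{thm:prc345} itself, which is precisely why \autoref{conjectureH} remains conjectural for $d\ge 6$.
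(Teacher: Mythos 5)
Your proposal is correct in substance and reaches the same conclusion by the same overall reduction as the paper: \autoref{conjectureH} is established only for $d \leq 5$, by relating it to the Picard rank conjecture $\Pic_\Q \H_{d,g} = 0$ of \autoref{thm:prc345}, which is exactly the content of \autoref{rem:equivalenceH}. The mechanism, however, differs. The paper asserts explicit two-way linear relations --- $[T]$, $[D]$, $[\Delta]$ are $\Q$-combinations of $\lambda_h, \kappa_h, \xi_h$ \emph{and vice versa} --- giving equality of spans and hence the equivalence in both directions for all $d$. You instead use a dimension count: excision plus $\Pic_\Q \H^\dagger_{d,g} = 0$ (from \autoref{thm:framed_unframed} and \autoref{thm:prc345}) pins $\Pic_\Q \td\H^\dagger_{d,g}$ at rank three, so independence of $\lambda_h, \kappa_h, \xi_h$, tested against $B_1, B_2, B_3$, forces spanning. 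This avoids writing down any relation, at the cost of intersection computations on families with reducible fibers (which you flag; note the $B_i$ live in the enlargement $\td\H^{ns}_{d,g}$, so you also need the tautological classes to extend there and the codimension-two complement to preserve $\Pic_\Q$ --- both true). Two caveats: your dimension count is unavailable for the converse implication, since without the PRC the rank of $\Pic_\Q \td\H^\dagger_{d,g}$ is unknown; deducing the PRC from \autoref{conjectureH} genuinely requires the ``vice versa'' relations, so that $\lambda_h, \kappa_h, \xi_h$ restrict to zero on $\H^\dagger_{d,g}$ --- intersection numbers with test curves cannot supply this, and your claimed equivalence silently reverts to the paper's relation-based argument there. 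Also, for $d = 3$ there is no divisor $D$ and the three tautological classes satisfy a relation, so ``independence'' must become ``the tautological span has rank two''; you flag this but the matrix argument needs the corresponding adjustment. Neither caveat affects the statement at issue: for $d \leq 5$ your direction of the implication suffices and matches the paper's scope.
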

\begin{remark}\label{rem:equivalenceH}
  It is easy to see that the classes of $T$, $D$, and $\Delta$ can be
  expressed as $\Q$-linear combinations of $\lambda_h$, $\kappa_h$,
  and $\xi_h$ and vice versa. Also, by \autoref{thm:framed_unframed},
  the framed/unframed distinction is irrelevant. Therefore,
  \autoref{conjectureH} is equivalent to the Picard rank conjecture
  stated in the introduction, namely that
  \[ \Pic_\Q \H_{d,g} = 0.\]
\end{remark}

We now state the main theorem of this section.
\begin{theorem}\label{SvsH}
  If $m \geq \lfloor (g+d-1)/(d-1) \rfloor$, then \autoref{conjectureS} for $\td \U_g(\F_m, d\tau)$ implies \autoref{conjectureH} for $\td{\H}_{d,g}^{\dagger}$. If $m \geq \lceil 2(g+d-1)/d \rceil$, then \autoref{conjectureS} for $\td{\U}_g(\F_m, d\tau)$  is equivalent to \autoref{conjectureH} for $\td{\H}_{d,g}^{\dagger}$. 
\end{theorem}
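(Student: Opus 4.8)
The plan is to carry out the associated scroll construction of \autoref{sec:ass_scroll} in families over the Hurwitz space and to read the comparison off the resulting fiber-bundle structure. By \autoref{rem:equivalenceS} and \autoref{rem:equivalenceH} it suffices to compare $\Pic_\Q\td\U_g(\F_m,d\tau)$ with $\Pic_\Q\td\H_{d,g}^\dagger$, and I will blur coarse spaces and stacks as the paper permits, so that everything is about rational Picard groups. Let $\td W\subseteq\td\H_{d,g}^\dagger$ be the open locus where $\lceil E_\alpha\rceil\le m+1$, equivalently where $h^1(E_\alpha^\vee(m))=0$ and $h^0(C,\alpha^*O(m))$ is constant. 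Over $\td W$ the pushforward $\mathcal B:=\rho_*\alpha^*O(m)$ is a vector bundle of rank $dm-g+1$ containing the trivial subbundle $\mathcal A:=H^0(\P^1,O(m))\otimes O_{\td W}$ of rank $m+1$; write $\mathcal B^\times$ for the complement of the total space of $\mathcal A$ in that of $\mathcal B$. A point of $\mathcal B^\times$ is a pair $(\alpha,\zeta)$ with $\zeta$ a section of $O_C(m)$ mapping nontrivially to $E_\alpha^\vee(m)$, and the family scroll construction sends it to $\nu(C)\in|d\tau|$. Since $C$ is nodal irreducible, this image has geometric genus $g$ and lies in the Diaz--Harris compactification $\td\U_g(\F_m,d\tau)$; its singularities are nodes, or---over the boundary divisors $T$, $D$, $\Delta$ of $\td\H_{d,g}^\dagger$---a cusp or triple point, a tacnode or triple point, and an additional node, respectively. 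We thus obtain a morphism $\Phi\from\mathcal B^\times\to\td\U_g(\F_m,d\tau)$.

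The map $\Phi$ has degree one onto its image: from $\nu(C)$ one recovers the cover $\alpha$ by normalizing and projecting, and then recovers $\zeta$ as the fiber coordinate of $\F_m$ along $\nu(C)$ (every irreducible member of $|d\tau|$ avoids the directrix, since $d\tau\cdot\sigma=0$, so $\zeta$ is genuinely non-pullback). Its image is the open locus $\td\U_{\td W}$ of curves whose normalizing cover lies in $\td W$. The codimension of the total space of $\mathcal A$ in that of $\mathcal B$ is $\rk\mathcal B-\rk\mathcal A=(d-1)m-g$, which is at least $2$ in the relevant range (the finitely many cases where it drops to $1$ would be handled by hand). Because removing a closed subset of codimension $\ge 2$ leaves the rational Picard group unchanged, and the total space of a vector bundle has the rational Picard group of its base, pullback along $\mathcal B^\times\to\td W$ gives an isomorphism $\Pic_\Q\td W\xrightarrow{\sim}\Pic_\Q\td\U_{\td W}$. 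This isomorphism respects tautological classes: the universal curve over $\mathcal B^\times$ is pulled back from $\td W$, and $\alpha^*[\mathrm{pt}]=\nu^*f$, so $\lambda_h,\kappa_h,\xi_h$ correspond to $\lambda_s,\kappa_s,\xi_s$.

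For the first assertion, assume $m\ge\lfloor(g+d-1)/(d-1)\rfloor=:k$; then $\lfloor E_\alpha\rfloor\le k\le m$ by \eqref{eqn:lowsummand}, so the scroll is available for every cover. Assuming \autoref{conjectureS}, restriction to the open set $\td\U_{\td W}$ shows $\Pic_\Q\td\U_{\td W}$ is tautological, hence so is $\Pic_\Q\td W$. The complement $\td\H_{d,g}^\dagger\setminus\td W=\{\lceil E_\alpha\rceil\ge m+2\}$ is a union of Maroni loci; by \autoref{maronidivisor} the unique Maroni divisor has $\lceil E\rceil=k+1\le m+1$, so every locus in the complement has codimension $\ge 2$. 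Therefore $\Pic_\Q\td\H_{d,g}^\dagger=\Pic_\Q\td W$ is tautological, which is \autoref{conjectureH}. For the second assertion, assume $m\ge\lceil 2(g+d-1)/d\rceil$. Then \eqref{eqn:highsummand} gives $\lceil E_\alpha\rceil\le 2(g+d-1)/d\le m$ for every cover, so $\td W=\td\H_{d,g}^\dagger$ and the complement of $\td\U_{\td W}$ in $\td\U_g(\F_m,d\tau)$ has codimension $\ge 2$. Hence $\Pic_\Q\td\U_g(\F_m,d\tau)\cong\Pic_\Q\td\H_{d,g}^\dagger$ compatibly with tautological classes, and \autoref{conjectureS} and \autoref{conjectureH} become equivalent.

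The asymmetry between the two assertions is the heart of the matter. For the implication one passes to an open subset of $\td\U_g(\F_m,d\tau)$, which automatically preserves the property of being tautological, so only the complement of $\td W$ in the Hurwitz space must be controlled---and \autoref{maronidivisor} makes it codimension $\ge 2$ precisely once $m\ge k$. To run the argument backwards one must instead transport tautology from the open set $\td\U_{\td W}$ up to all of $\td\U_g(\F_m,d\tau)$, which requires the complement there to be codimension $\ge 2$; this forces $\lceil E_\alpha\rceil\le m$ for every cover and hence the stronger bound. The main technical work will lie in two places. First, I would need to verify that the family scroll extends to a morphism over the boundary $T\cup D\cup\Delta$ with exactly the Diaz--Harris singularity types, so that $\Phi$ is a well-defined degree-one morphism onto an open subset of $\td\U_g(\F_m,d\tau)$ and the two boundary stratifications match. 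Second, I would need to confirm the codimension-$\ge 2$ statements for the Maroni and boundary loci on the compactified spaces, for which the dimension estimates of \autoref{sec:CE} are the essential input. I expect this boundary comparison, together with the passage between the interiors and their compactifications, to be the principal obstacle.
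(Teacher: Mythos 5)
Your overall architecture is the paper's own: realize an open piece of $\td\U_g(\F_m,d\tau)$ as (an open subset of) the total space of the bundle of sections $f_*\alpha^*O(m)$ over the locus $W\subset\td\H_{d,g}^\dagger$ where $h^0(C,\alpha^*O(m))$ is constant, use \autoref{maronidivisor} to make the complement of $W$ codimension two, and explain the asymmetry of the two bounds exactly as the paper does (via \eqref{eqn:lowsummand} for the implication and \eqref{eqn:highsummand} for the equivalence). However, two specific claims in your write-up are false as stated, and they sit precisely where the paper's proof does its real work. First, it is not true that for $(\alpha,\zeta)\in\mathcal B^\times$ with $C$ smooth or nodal the image $\nu(C)$ is nodal, with worse singularities confined to the boundary divisors $T$, $D$, $\Delta$: for a \emph{special} section $\zeta$, the scroll image of a perfectly smooth, simply branched cover can acquire cusps (e.g.\ when the differential of $\zeta$ vanishes at a ramification point of $\alpha$), tacnodes, triple points, or arbitrarily bad singularities, so the two boundary stratifications do not match divisor-to-divisor. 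Consequently the locus of $\mathcal B^\times$ mapping into the Diaz--Harris space $\td\U$ is a strictly smaller open set, and the substance of the proof is showing that what one removes has codimension $\geq 2$: this is the role of \autoref{sev:divisors} (only $CU$, $TN$, $TP$, $\Delta$ are divisorial in $\V^\irr\setminus\U$), transferred via quasi-finiteness of the map to $\V^\irr$, after which normality and the universal property of the normalization $\td\U$ give honest mutually inverse morphisms $p$, $q$ between open sets $Y\cong Z$ with codimension-two complements. Relatedly, ``degree one onto its image'' does not by itself yield $\Pic_\Q\td W\xrightarrow{\sim}\Pic_\Q\td\U_{\td W}$ --- a birational morphism can change the rational Picard group --- so the $p$-$q$ structure is not optional polish but the mechanism of the comparison.

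Second, your dismissal of the case where the trivial subbundle has codimension one (``finitely many cases\dots handled by hand'') misjudges it. The codimension $(d-1)m-g$ equals $1$ exactly when $g\equiv -1\pmod{d-1}$ and $m=\lfloor(g+d-1)/(d-1)\rfloor$, i.e.\ for infinitely many $g$ for each $d$, and precisely at the bound in the first assertion, so it cannot be quarantined as sporadic. The paper's fix is structural: in this case the image of $H^0(\P^1,O(m))\otimes O_W$ is a genuine divisor in the total space $V$, but its class in $\Pic_\Q V\cong\Pic_\Q W$ lies in the span of $\lambda$, $\kappa$, $\xi$, so the property of being tautological still transfers across $X\hookrightarrow V$. (You also implicitly assume the scroll map is defined on all of $\mathcal B^\times$; over covers $\alpha$ that factor nontrivially there exist non-pullback sections $\zeta$ for which $\nu$ is not birational, and the paper disposes of these by noting that the factoring locus has codimension $\geq 2$.) None of this invalidates your plan --- it is the paper's plan --- but the two items you defer as ``main technical work'' are the actual content of the theorem, and the singularity-matching assertion you make en route is wrong and must be replaced by the codimension argument from \autoref{sev:divisors}.
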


\begin{proof}
  Let $m \geq \lfloor (g+d-1)/(d-1)\rfloor$. Retain the notation introduced in this section. In particular, abbreviate $\U_g(\F_m, d\tau)$ by $\U$, and so on. Let $\pi \from \F_m \to \P^1$ be the projection and $\sigma \subset \F_m$ the directrix. Fix a section $\zeta \in H^0(\F_m, \pi^*O(m))$ corresponding to a smooth element of the linear series $|\tau|$. We view $\F_m \setminus \sigma$ as the total space of the line bundle $O(m)$ on $\P^1$ and $\zeta$ as the tautological section of $\pi^*O(m)$ on this total space.

Let $\phi \from \mathcal C \to \P^1$ be the composite $\phi = \pi \circ \nu$. Let $Z \subset \td \U$ be the open subset consisting of $u$ where $h^0(\mathcal C_u, \phi^*O(m))$ is minimal. Likewise, let $W \subset \td \H_{d,g}^\dagger$ be the subset consisting of $[\alpha \from C \to \P^1]$ where $h^0(C, \alpha^*O(m))$ is minimal. By \autoref{maronidivisor}, the complement of $W$ in $\td \H_{d,g}^\dagger$ has codimension at least two. Let $V$ be the total space of the vector bundle $f_* \alpha^*O(m)|_W$ over $W$.

  We have a birational morphism $q \from Z \to V$ defined as follows. A point $u \in \td \U$ is mapped to $[\phi_u \from \mathcal C_u \to \P^1, v]$, where $v \in H^0(\mathcal C_u, \phi_u^*O(m))$ is the restriction of $\zeta$. To define the inverse, we must restrict to an open subset of $V$. Let $X \subset V$ be the open subset consisting of $([\alpha \from C \to \P^1], v)$, where $v \in H^0(C, \alpha^*O(m))$ is such that the lift of $C \to \P^1$ to $C \to \F_m$ defined by $v$ is birational onto its image. We then get a morphism $p \from X \to \V^\irr$, which is quasi-finite, and generically one-to-one. Let $Y \subset X$ be the open subset consisting of points whose associated element in $\V^\irr$ has at worst a cusp, a tacnode, a triple point, or an additional node. By \autoref{sev:divisors}, and the quasi-finiteness of $p$, the complement of $Y$ in $X$ has codimension at least two. Since $Y$ is normal, we get a morphism $p \from Y \to Z \subset \td \U$, inverse to $q$. We summarize the spaces we have defined and their relationships in the following diagram.
\begin{equation}\label{eqn:picture}
\begin{tikzpicture}
  \node (H) {$\td \H_{d,g}^\dagger$};
  \node (W) [right=2em of H] {$W$}; 
  \node (V) [above of=W]{$V$};
  \node (X) [right of=V]{$X$};
  \node (Y) [right of=X]{$Y$};
  \node (Z) [right of=Y]{$Z$};
  \node (U) [below of=Z]{$\td\U$};
  \draw[left hook->] 
  (W) edge node[above]{$\star$} (H) 
  (X) edge (V) 
  (Y) edge node[above]{$\star$} (X)
  (Z) edge (U);
  \draw[->] (V) edge node[right]{$\star$} (W);
  \draw[->,bend left=20] (Y) edge node[above]{\small $p$} (Z);
  \draw[->,bend left=20] (Z) edge node[below]{\small $q$} (Y);
\end{tikzpicture}.
\end{equation}
The inclusions are open inclusions. $Y$ and $Z$ are isomorphic via $p$ and $q$. The maps marked by $\star$ induce isomorphisms on Picard groups. For the open inclusions, this is because the complements have codimension at least two. For $V \to W$, this is because it is a vector bundle. 

Denote the pullbacks of $\lambda_h$, $\kappa_h$, and $\xi_h$ to $W$, $V$, $X$, and $Y$ by the same letters. Then, we have
\begin{align*}
  p^* \lambda_s &= \lambda_h &  p^* \kappa_s &= \kappa_h & p^* \xi_s &= \xi_h\\
  q^* \lambda_h &= \lambda_s  & q^* \kappa_h &= \kappa_s & q^* \xi_h &= \xi_s.
\end{align*}
We may thus drop the subscripts and use $\lambda$, $\kappa$, and $\xi$ to denote the corresponding divisors on any of the spaces in \eqref{eqn:picture}. 

Before we proceed, we must comment on the inclusion $X \hookrightarrow V$. The complement consists of $([\alpha \from C \to \P^1], v)$, where $v \in H^0(C, \alpha^*O(m))$ does not give a birational map to $\F_m$. Let us disregard the $\alpha$'s that factor non-trivially (such $\alpha$'s form set of codimension at least two). Then the only such $v$ are the pullbacks of the sections in $H^0(\P^1, O(m))$. The locus $([\alpha \from C \to \P^1], v)$, where $v \in \alpha^*H^0(\P^1, O(m))$ has codimension at least two except in the case $g \equiv -1 \pmod {(d-1)}$, and $m = \lfloor (g+d-1)/(d-1)\rfloor$, that is, when the generic splitting of $\alpha_* O_C$ is
\[ \alpha_* O_C = O \oplus O(-m) \oplus O(-m-1) \oplus \dots \oplus O(-m-1).\]
In this case, the complement of $X$ in $V$ has a divisorial component given by the image of the constant vector bundle $H^0(\P^1, O(m)) \otimes O_W$. However, the class of this divisor in $\Pic_\Q V \cong \Pic_\Q W$ is in the span of $\lambda$, $\kappa$, and $\xi$. Therefore, in any case, $\Pic_\Q V$ is spanned by $\lambda$, $\kappa$, and $\xi$ if and only if $\Pic_\Q X$ is.

Assume that \autoref{conjectureS} holds. From diagram \eqref{eqn:picture}, we see that $\Pic_\Q X$ is spanned by $\lambda$, $\kappa$, and $\xi$. By the comment about $X \hookrightarrow V$ above, this implies that $\Pic_\Q V$, and in turn $\Pic_\Q \td \H^\dagger_{d,g}$ is spanned by $\lambda$, $\kappa$, and $\xi$. Hence \autoref{conjectureH} holds.

Assume that $m \geq \lceil 2(g+d-1)/(d-1) \rceil$ and \autoref{conjectureH} holds. Then, by \autoref{thm:constraintsE} the inclusion $Z \hookrightarrow \td \U$ is in fact an isomorphism. Again, diagram \eqref{eqn:picture} shows that $\Pic_\Q \td \U$ is spanned by $\lambda$, $\kappa$, and $\xi$. Hence \autoref{conjectureS} holds.
\end{proof}

\section*{Acknowledgments}
We are grateful to our advisor Joe Harris for suggesting this problem
and for providing constant support and encouragement. We also thank
Gabriel Bujokas, Dawei Chen, Maksym Fedorchuk, and Ravi Vakil for
helpful comments and conversations.  \bibliographystyle{amsalpha}
\bibliography{CommonMath}
\end{document}